\newcommand{\FF}{{\mathbb{F}}}
\newcommand{\bC}{{\mathbf C}}
\newcommand{\bZ}{{\mathbf Z}}
\newcommand{\oo}{\mathcal{O}}
\newcommand{\Syl}{\operatorname{Syl}}
\newcommand{\Irr}{\operatorname{Irr}}
\newcommand{\SL}{\operatorname{SL}}
\newcommand{\PSL}{\operatorname{PSL}}
\newcommand{\PSp}{\operatorname{PSp}}
\newcommand{\PSU}{\operatorname{PSU}}
\newcommand{\Pro}{\operatorname{Pr}}
\newcommand{\dd}{\operatorname{d}}
\newcommand{\diag}{{{\operatorname{diag}}}}
\newcommand{\GL}{\operatorname{GL}}
\newcommand{\Al}{\textup{\textsf{A}}}
\newtheorem{thm}{Theorem}[section]
\newtheorem{lem}[thm]{Lemma}
\newtheorem{pro}[thm]{Proposition}
\newtheorem{cor}[thm]{Corollary}
\theoremstyle{definition}
\theoremstyle{remark}
\newtheorem{remark}[thm]{Remark}
\numberwithin{equation}{section}
\begin{document}


\title[Classes of $\pi$-elements and nilpotent/abelian Hall $\pi$-subgroups]
{Conjugacy classes of $\pi$-elements\\ and nilpotent/abelian Hall
$\pi$-subgroups}

\author[N.\,N. Hung]{Nguyen N.  Hung}
\address{Department of Mathematics, The University of Akron, Akron, OH 44325, USA}
\email{hungnguyen@uakron.edu}
\author[A. Mar\'{o}ti]{Attila Mar\'{o}ti}
\address{Alfr\'ed R\'enyi Institute of Mathematics, Re\'altanoda utca 13-15,
  H-1053, Budapest, Hungary}
\email{maroti@renyi.hu}
\author[J. Mart\'inez]{Juan Mart\'inez}
\address{Departament de Matem\`atiques, Universitat de Val\`encia, 46100
  Burjassot, Val\`encia, Spain}
\email{juanma23@alumni.uv.es}

\thanks{The first and third authors
would like to thank Alexander Moret\'{o} for helpful conversations.
We are grateful to the referee for several suggestions that have
greatly improved the exposition in the paper. Part of this work was
done while the first author was visiting the Vietnam Institute for
Advanced Study in Mathematics (VIASM), and he thanks the VIASM for
financial support and hospitality. The second author would like to
thank the Isaac Newton Institute for Mathematical Sciences,
Cambridge for its support and hospitality during the programme
Groups, representations and applications: new perspectives when work
on this paper was undertaken. The work of the second author has
received funding from the European Research Council (ERC) under the
European Union's Horizon 2020 research and innovation programme
(grant agreement No 741420); he was also supported by the National
Research, Development and Innovation Office (NKFIH) Grant
No.~K138596, No.~K132951 and Grant No.~K138828. }

\keywords{Finite groups, conjugacy classes, $\pi$-elements, Hall
subgroups.}

\subjclass[2010]{Primary 20E45; secondary 20D10, 20D20.}

\begin{abstract}
Let $G$ be a finite group and $\pi$ be a set of primes. We study
finite groups with a large number of conjugacy classes of
$\pi$-elements. In particular, we obtain precise lower bounds for
this number in terms of the $\pi$-part of the order of $G$ to ensure
the existence of a nilpotent or abelian Hall $\pi$-subgroup in $G$.
\end{abstract}

\maketitle


\section{Introduction}

Let $G$ be a finite group. The number $k(G)$ of conjugacy classes of
$G$ is an important and much investigated invariant in group theory.
It is equal to the number of complex irreducible representations of
$G$. The probability $\Pro(G)$ that two uniformly and randomly
chosen elements from $G$ commute is given by $k(G)/|G|$ where $|G|$
denotes the order of $G$. This is called the commuting probability
or the commutativity degree of $G$ and it has a large literature,
see \cite{Gustafson,N,Lescot,GR,Eberhard15} and references therein.
The commuting probability has also been studied for infinite groups,
see \cite{T}.

A starting point of our work is a much cited theorem of Gustafson
\cite{Gustafson} stating that  $\Pro(G)>5/8$ for a finite group $G$
if and only if it is abelian. Let $p$ be the smallest prime divisor
of the order of a finite group $G$. It was observed by Guralnick and
Robinson \cite[Lemma 2]{GR} that if $\Pro(G) > 1/p$, then $G$ is
nilpotent. Moreover, Burness, Guralnick, Moret\'{o} and Navarro
\cite[Lemma 4.2]{Orig} recently showed that if
$\Pro(G)>\frac{p^2+p-1}{p^3}$, then $G$ is abelian. An aim of this
paper is to give a generalization of all three of these results.

Let $\pi$ be a set of primes. A positive integer is called a
$\pi$-number if it is not divisible by any prime outside $\pi$. The
$\pi$-part $n_\pi$ of a positive integer $n$ is the largest
$\pi$-number which divides $n$. An element of a finite group is
called a $\pi$-element if its order is a $\pi$-number. The set of
all $\pi$-elements in a finite group is a union of conjugacy classes
of the group. Let $k_{\pi}(G)$ be the number of conjugacy classes of
$\pi$-elements in a finite group $G$ and let
\[d_\pi(G):=k_{\pi}(G)/|G|_\pi.\] This invariant is always at most $1$ by an
old result of Robinson, see \cite[Lemma 3.5]{GGG}. The main theorem
of the paper \cite{MN} is that if $d_\pi(G)>5/8$ for a finite group
$G$ and a set of primes $\pi$, then $G$ possesses an abelian Hall
$\pi$-subgroup. The following result is a far reaching
generalization of this statement.

\begin{thm}\label{thm:main}
Let $G$ be a finite group and let $\pi$ be a set of primes. Let $p$
be the smallest member of $\pi$. If $\dd_{\pi}(G)> \frac{1}{p}$,
then $G$ has a nilpotent Hall $\pi$-subgroup, whose derived
subgroup has size at most $p$. Moreover, if
$\dd_{\pi}(G)> \frac{p^{2}+p-1}{p^{3}}$, then $G$ has an abelian
Hall $\pi$-subgroup.
\end{thm}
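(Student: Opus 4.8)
The plan is to reduce to known results about conjugacy classes of $p$-elements by an inductive argument on $|G|$, exploiting the fact that $d_\pi$ behaves well with respect to normal subgroups and quotients. First I would record the basic arithmetic: if $N \nor G$ then $k_\pi(G) \leq k_\pi(N) \cdot k_\pi(G/N)$ wants to be refined to the statement that $d_\pi(G) \leq d_\pi(N)\, d_\pi(G/N)$ when, say, $N$ is a normal $\pi$-subgroup or a normal $\pi'$-subgroup, and more generally one needs a Clifford-theoretic count of classes of $\pi$-elements lying over a fixed $G$-orbit of classes of $N$. The point of such inequalities is that the hypothesis $d_\pi(G) > 1/p$ (resp. $> (p^2+p-1)/p^3$) is inherited by suitable sections, so that minimal counterexamples have a very restricted structure.

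Next I would set up the induction. Assume $G$ is a counterexample of minimal order to the first (nilpotence) statement. Using the reductions above together with the hypothesis, one forces $G$ to have no nontrivial normal $\pi'$-subgroup and no nontrivial normal $\pi$-subgroup of small enough size to be quotiented away — in other words $G$ should be pushed toward being close to a group with a "large" $\pi$-section that is almost simple or a product of such, and one then invokes the classification-based input from \cite{MN} (the $5/8$-theorem) and its underlying analysis of $k_\pi$ for simple groups. The key numerical fact to extract is that for a nonabelian simple group $S$ and any set of primes $\pi$ meeting $|S|$, the ratio $d_\pi$ is bounded well below $1/p$ (for $p=2$ one needs the sharper bound $<3/8$ to handle the abelian conclusion, and correspondingly the bound $(p^2+p-1)/p^3$ for odd $p$). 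Granting this, a minimal counterexample cannot involve a nonabelian composition factor with order divisible by a prime in $\pi$, so $G$ has a nilpotent Hall $\pi$-subgroup by a Hall-type / $\pi$-separability argument; a final local count of classes of $\pi$-elements in a $\pi$-separable group with nilpotent Hall $\pi$-subgroup $H$ shows $d_\pi(G) \leq k(H)/|H|$, and then Gustafson's $5/8$-theorem applied to $H$ — pushed to the refined bounds of \cite[Lemma 2]{GR} and \cite[Lemma 4.2]{Orig} applied to the $p$-group $H/\OB_{p'}(H)$ or to $H$ directly — yields $|H'| \leq p$ in the first case and $H$ abelian in the second, contradicting the choice of $G$.

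The main obstacle, I expect, is the simple-group estimate: proving that $d_\pi(S)$ is bounded away from $1/p$ (and from $3/8$ when $2 \in \pi$) uniformly over all nonabelian simple $S$ and all relevant $\pi$. This requires a case analysis over the classification — alternating groups, sporadic groups, and groups of Lie type — and for Lie type groups one must estimate $k_\pi$ via semisimple and unipotent class counts and compare with $|S|_\pi$; the delicate cases are those where $\pi$ consists of a single small prime or where $\pi$ is the defining characteristic, since then $|S|_\pi$ can be large and one needs genuinely good bounds on the number of classes of $\pi$-elements rather than crude ones. A secondary technical point is making the Clifford-theoretic reduction lemma precise enough that the strict inequalities in the hypotheses survive passage to sections, including the case of a diagonal action on a product of simple groups; handling automorphisms and outer action carefully is where bookkeeping will be heaviest.
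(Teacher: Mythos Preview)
Your overall architecture---minimal counterexample, submultiplicativity of $d_\pi$ along normal series, reduction to simple groups, then classification---matches the paper's. But the central lemma you propose is \emph{false} as stated: it is not true that $d_\pi(S) < 1/p$ for every nonabelian simple $S$ and every $\pi$ meeting $|S|$. For $S=J_1$ and $\pi=\{3,5\}$ one has $k_\pi(S)=6$ and $|S|_\pi=15$, so $d_\pi(S)=2/5>1/3$. (This is not a counterexample to the theorem, because $J_1$ does have a nilpotent Hall $\{3,5\}$-subgroup.) So you cannot simply ``extract'' the bound $d_\pi(S)\leq 1/p$ from the classification; extra hypotheses on $S$ are required, and producing them is where the real work lies.

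The paper obtains these hypotheses as follows. First, via Moret\'o's reduction it suffices to take $|\pi|=2$, say $\pi=\{p,q\}$ with $p<q$; the case $p=2$ is dispatched by Tong-Viet's normal $2$-complement theorem and needs no classification. For odd $p$, the condition $d_\pi(G)>1/p$ forces $d_r(G)>1/p$ for each $r\in\pi$, hence the Sylow $p$- and $q$-subgroups have constrained structure ($|P'|\leq p$, $Q$ abelian). Crucially, an orbit-counting argument on the action of a $q$-element on $P$ shows that $q\nmid |\mathbf{N}_G(P):\mathbf{C}_G(P)|$; combined with the assumption that $G$ has no nilpotent Hall $\pi$-subgroup, this forces $q\mid |G:\mathbf{N}_G(P)|$. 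Only under \emph{all} of these hypotheses does the paper prove $d_\pi(S)\leq 1/p$ for simple $S$ (its Theorem~\ref{teoB}), and the $J_1$ exception is excluded precisely because there $5\nmid |J_1:\mathbf{N}_{J_1}(P)|$. Your proposal is missing this normalizer/centralizer step entirely, and without it the simple-group estimate cannot be established.
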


A well-known theorem of Wielandt \cite{Wielandt} states that if a
finite group $G$ contains a nilpotent Hall $\pi$-subgroup for some
set of primes $\pi$ then all Hall $\pi$-subgroups of $G$ are
conjugate and every $\pi$-subgroup of $G$ is contained in a Hall
$\pi$-subgroup. Therefore, the $\pi$-subgroups of a group satisfying
the hypothesis of Theorem \ref{thm:main} behave like Sylow
subgroups.

There are several results in the literature on the existence of
abelian or nilpotent Hall subgroups in finite groups. For example
\cite[Theorem B]{BFMMNSST} states that if $G$ is a finite group and
$\pi$ a set of primes, then $G$ has nilpotent Hall $\pi$-subgroups
if and only if for every pair of distinct primes $p$, $q$ in $\pi$
the class sizes of the $p$-elements of $G$ are not divisible by $q$.

For certain sets $\pi$, Tong-Viet \cite{TV} proved some nice results
on the existence of normal $\pi$-complements in finite groups $G$
under the condition that $\dd_{\pi}(G)$ is large. For example,
\cite[Theorem E]{TV} states that if $p>2$ is the smallest prime in
$\pi$ and $d_\pi(G)>(p+1)/2p$, then $G$ contains not only an abelian
Hall $\pi$-subgroup but also a normal $\pi$-complement. Another is
\cite[Theorem A]{TV}, which states that if $d_2(G)>1/2$ then $G$ has
a normal $2$-complement. We in fact make use of this result to prove
Theorem \ref{thm:main} in the case $2\in\pi$. As a consequence, the proof for this case
does not depend on the classification of finite simple groups. The other case
$2\notin\pi$, however, is more challenging and our proof has to rely on the
classification.

The paper is organized as follows. In Section \ref{sec:preliminary}
we prove some preliminary results on the commuting probability
$\Pro(G)$. In Section \ref{sec:Hall} we prove some basic properties
of the $\pi$-class invariant $\dd_{\pi}(G)$ and, in particular, we show in Theorem
\ref{equivalency} that in order to prove the main result, it
suffices to show the existence of a
nilpotent Hall $\pi$-subgroup under the hypothesis $\dd_{\pi}(G)>
\frac{1}{p}$. We then establish this statement in Section
\ref{sec:reducing}, modulo a result on finite simple groups (Theorem
\ref{teoB}) that will be proved in Section \ref{sec:simple groups}.
Finally, in Section \ref{sec:examples}, we present examples showing
that the converse of Theorem \ref{thm:main} is false and that the
obtained bounds are sharp in general.


\section{Commuting probability}\label{sec:preliminary}

In this section we recall and prove some results about the commuting
probability $\Pro(G)$ that will be needed later.

The first lemma is a generalisation of Gustafson's result   \cite{Gustafson} mentioned
earlier. The inequality part is due to Burness,
Guralnick, Moret\'{o}, and Navarro \cite{Orig}.

\begin{lem}\label{lem1}
    Let $G$ be a finite group and  $p$  the smallest prime dividing
    $|G|$. If $G$ is not abelian, then  $\Pro(G)\leq
    \frac{p^2+p-1}{p^3}$ with equality  if and only if
    $G/\bZ(G)=C_{p}\times C_{p}$.
\end{lem}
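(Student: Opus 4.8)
The plan is to run the elementary centralizer count. Writing $n=|G|$ and $z=|\zent{G}|$, I would begin from the class-number identity
\[
\Pro(G)=\frac{k(G)}{|G|}=\frac{1}{|G|^{2}}\sum_{g\in G}|\cent{G}{g}|,
\]
and split the sum according to whether $g$ is central or not: the $z$ central elements each contribute $|G|=n$, while the $n-z$ non-central ones are handled by two observations. First, if $g\notin\zent{G}$ then $\cent{G}{g}$ is a proper subgroup of $G$, so its index is at least $p$ and $|\cent{G}{g}|\le n/p$. Second, since $G$ is nonabelian the quotient $G/\zent{G}$ is noncyclic, and because every prime divisor of $|G/\zent{G}|$ is $\ge p$ this forces $|G/\zent{G}|\ge p^{2}$, i.e.\ $z\le n/p^{2}$.

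Combining these in the split sum yields
\[
\Pro(G)\le\frac{1}{n^{2}}\Bigl(zn+(n-z)\frac{n}{p}\Bigr)=\frac{n+(p-1)z}{pn}\le\frac{n+(p-1)n/p^{2}}{pn}=\frac{p^{2}+p-1}{p^{3}},
\]
which is the asserted bound. (The inequality itself is already in \cite{Orig}; the purpose of writing it this way is to control the equality case.)

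For the equality clause I would simply identify when both inequalities above are tight. The second is an equality exactly when $|G/\zent{G}|=p^{2}$, and then $G/\zent{G}\cong C_{p}\times C_{p}$, since a group of order $p^{2}$ is abelian while a cyclic central quotient would force $G$ abelian. Conversely, assuming $G/\zent{G}\cong C_{p}\times C_{p}$, for any $g\notin\zent{G}$ the subgroup $\cent{G}{g}/\zent{G}$ of $C_{p}\times C_{p}$ is proper (because $\cent{G}{g}\ne G$) yet nontrivial (it contains $g\zent{G}$), hence of order $p$; therefore $|\cent{G}{g}|=pz=n/p$, so the first inequality is also an equality and the whole chain collapses to $\Pro(G)=\frac{p^{2}+p-1}{p^{3}}$. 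I do not expect a genuine obstacle in this lemma; the only point needing a little care is verifying that the tightness conditions coming from the two inequalities are jointly forced and that together they amount precisely to $G/\zent{G}\cong C_{p}\times C_{p}$, which is exactly what the above does.
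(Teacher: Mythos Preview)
Your proof is correct and follows essentially the same approach as the paper's. The paper cites \cite[Lemma 4.2]{Orig} for the inequality and then, exactly as you do, shows that $G/\bZ(G)\cong C_p\times C_p$ forces $|x^G|=p$ for every noncentral $x$ (equivalently $|\cent{G}{x}|=n/p$), which pins down the equality case; your write-up simply makes the centralizer-count argument behind \cite{Orig} explicit rather than citing it.
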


\begin{proof}
    The first part of the lemma is \cite[Lemma 4.2]{Orig}. Following its
    proof, we see that the equality $\Pro(G)= \frac{p^2+p-1}{p^3}$ holds
    if and only if $G/\bZ(G)=C_{p}\times C_{p}$ and $|x^G|=p$ for every $x \in G
    \setminus \bZ(G)$. It suffices to prove that if $G/\bZ(G)=C_{p}\times C_{p}$,
    then $|x^G|=p$ for every $x \in G\setminus \bZ(G)$.

    Assume that $G/\bZ(G)=C_{p}\times
    C_{p}$ and let $x \in G \setminus \bZ(G)$. Since $x \in
    \bC_{G}(x)\setminus \bZ(G)$, we have that $\bZ(G) < \bC_{G}(x)$.
    Therefore, $|x^G|=\frac{|G|}{|C_{G}(x)|}$ is a proper divisor of
    $\frac{|G|}{|\bZ(G)|}=p^2$. On the other hand, since $x$ is not
    central, $|x^G|>1$. Thus, $|x^G|=p$, and the claim follows.
\end{proof}

Note that if $G$ is an extraspecial $p$-group of order $p^3$ with
$p$ odd or if $G$ is a dihedral group when $p=2$, then
$G/\bZ(G)=C_{p}\times C_{p}$. Therefore, the bound in Lemma
\ref{lem1} is sharp for all $p$.

We next give a bound for $\Pro(G)$ in terms of the smallest prime
factor of the order of $G$ and the order of its derived subgroup
$G'$.

\begin{lem}\label{bound}
    If $p$ is the smallest prime dividing the order of a finite group $G$, then $$\Pro(G) \leq
    \frac{1 + (p^{2} -1)/|G'|}{p^2}.$$
\end{lem}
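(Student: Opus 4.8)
The plan is to translate the statement into character theory, using $k(G)=|\Irr(G)|$ together with the degree identity $\sum_{\chi\in\Irr(G)}\chi(1)^{2}=|G|$. Two elementary inputs drive the argument. First, the linear characters of $G$ are exactly the $|G:G'|$ characters inflated from the abelian quotient $G/G'$. Second, every \emph{nonlinear} irreducible character $\chi$ satisfies $\chi(1)\geq p$: indeed $\chi(1)$ divides $|G|$, and since $p$ is the smallest prime divisor of $|G|$, any divisor of $|G|$ exceeding $1$ is at least $p$.

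Carrying this out, I would partition $\Irr(G)$ into the $|G:G'|$ linear characters, each contributing $1$ to $\sum_{\chi}\chi(1)^{2}$, and the remaining $k(G)-|G:G'|$ characters, each contributing at least $p^{2}$. This yields
\[
|G:G'|+p^{2}\bigl(k(G)-|G:G'|\bigr)\ \leq\ |G|.
\]
Solving for $k(G)$ gives $k(G)\leq |G:G'|+\bigl(|G|-|G:G'|\bigr)/p^{2}$, and dividing through by $|G|$ while using $|G:G'|/|G|=1/|G'|$ rearranges this to
\[
\Pro(G)=\frac{k(G)}{|G|}\ \leq\ \frac{1}{p^{2}}+\frac{p^{2}-1}{p^{2}|G'|}\ =\ \frac{1+(p^{2}-1)/|G'|}{p^{2}},
\]
which is precisely the assertion.

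I do not anticipate a genuine obstacle here: once the two inputs are in place the estimate is a one-line computation with the degree equation. The only point that requires (minor) care is the second input, namely that a nonlinear irreducible character has degree at least $p$, which I would justify from $\chi(1)\mid|G|$ and the minimality of $p$ among the prime divisors of $|G|$. It is also worth recording the degenerate case $G'=1$, in which $G$ is abelian and the bound reads $\Pro(G)\leq 1$, trivially true; and observing that equality can occur, for instance for an extraspecial group of order $p^{3}$, where $|G'|=p$ and the bound specialises to the inequality of Lemma~\ref{lem1}.
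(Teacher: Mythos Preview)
Your proof is correct and is essentially identical to the paper's own argument: both partition $\Irr(G)$ into the $|G:G'|$ linear characters and the remaining characters of degree at least $p$, then bound $\sum_\chi \chi(1)^2=|G|$ from below by $|G:G'|+p^2(k(G)-|G:G'|)$ and rearrange. Your additional remarks on the degenerate case $G'=1$ and on equality for extraspecial groups of order $p^3$ are fine but not present in the paper's version.
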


\begin{proof}
    Let $\Irr(G)$ denote the set of all irreducible complex characters
    of $G$. We have
    $$|G| = \sum_{\chi \in \mathrm{Irr}(G)} \chi(1)^2 \geq |G/G'| +
    p^{2}(k(G) - |G/G'|),$$ since $\chi(1)$ divides $|G|$ for every
    $\chi\in\Irr(G)$. After dividing both sides of the previous inequality by $|G|$, we obtain $1 \geq
    1/|G'| + p^{2}(\Pro(G) - 1/|G'|)$. This yields
$\Pro(G) \leq
    {(1 + (p^{2} -1)/|G'|)}/{p^2},$ as we claimed.
\end{proof}

\begin{lem}\label{Gprima}
    Let $G$ be a finite group and $p$ the smallest prime dividing
    $|G|$. Suppose that $|G'|\leq p$. Then $G' \leq \bZ(G)$, and thus $G/\bZ(G)$ is abelian. In
    particular, $G$ is nilpotent.
\end{lem}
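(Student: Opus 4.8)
The plan is to exploit the conjugation action of $G$ on its derived subgroup, which under the hypothesis is forced to be cyclic of prime order. If $|G'|=1$ then $G$ is abelian and the statement is trivial, so I may assume $|G'|=p$. Then $G'$ is cyclic of order $p$, and hence $\Aut(G')$ is cyclic of order $p-1$.

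Next I would invoke the $N/C$-theorem: since $G'\trianglelefteq G$, conjugation induces a homomorphism $\varphi\colon G\to\Aut(G')$ with kernel $\bC_G(G')$, so $G/\bC_G(G')$ embeds into $\Aut(G')$. The image $\Imm\varphi$ is a quotient of $G$, so $|\Imm\varphi|$ divides $|G|$; it is also a subgroup of the cyclic group $\Aut(G')$, so $|\Imm\varphi|$ divides $p-1$. Because $p$ is the \emph{smallest} prime divisor of $|G|$, every prime dividing $|G|$ is at least $p>p-1$, whence $\gcd(|G|,p-1)=1$ and therefore $\Imm\varphi=1$. This says exactly that $\bC_G(G')=G$, i.e. $G'\leq\bZ(G)$.

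Finally, $G/\bZ(G)$ is a quotient of $G/G'$, which is abelian, so $G/\bZ(G)$ is abelian. This in turn forces nilpotency: for the second centre $Z_2(G)$ one has $Z_2(G)/\bZ(G)=\bZ(G/\bZ(G))=G/\bZ(G)$, so $Z_2(G)=G$ and $G$ has nilpotency class at most $2$.

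There is no real obstacle in this argument; the only step that genuinely uses the hypothesis (beyond the bound $|G'|\le p$) is the coprimality observation $\gcd(|G|,p-1)=1$, which relies on $p$ being the minimal prime divisor of $|G|$. Everything else is standard.
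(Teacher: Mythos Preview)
Your proof is correct and follows essentially the same approach as the paper: the paper simply asserts that a normal subgroup whose order is the smallest prime dividing $|G|$ is central, and you have supplied the standard $N/C$-theorem argument underlying that fact. The remaining deductions (that $G/\bZ(G)$ is abelian and hence $G$ is nilpotent of class at most $2$) are also spelled out in more detail than in the paper, but the route is the same.
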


\begin{proof}
    The case $|G'|=1$ is obvious, so we assume $|G'|=p$. Since $G'$ is
    normal and its order is the smallest prime dividing $|G|$, we deduce
    that $G'$ is central in $G$, and the result follows.
\end{proof}

Next we refine Lemma \ref{lem1}.
It follows from \cite[Lemma 2(xiii)]{GR} of Guralnick and Robinson
that if $\Pro(G)>\frac{1}{p}$, where $p$ is the smallest prime
dividing $|G|$, then $G$ is nilpotent.

\begin{thm}\label{firstpart}
    Let $G$ be a finite group and $p$ the smallest prime dividing $|G|$.
    Then  $\frac{1}{p}<\Pro(G)\leq \frac{p^2+p-1}{p^3}$ if and only if $|G'|=p$. Moreover, in
    such case,
    \[\Pro(G)=\frac{1}{p}+\frac{p-1}{p|G:\bZ(G)|}.\]
\end{thm}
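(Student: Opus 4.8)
The plan is to prove the two implications separately. The forward direction, deducing $|G'|=p$ from the inequalities, will follow cheaply from Lemmas \ref{lem1} and \ref{bound}; the substantive computation is in the reverse direction, where one must evaluate $\Pro(G)$ exactly for a group with $|G'|=p$. So suppose first that $|G'|=p$. By Lemma \ref{Gprima} we have $G'\le\bZ(G)$, so $G$ has nilpotency class at most $2$ and $G'$ is cyclic of order $p$. The key point is that for any $x\in G\setminus\bZ(G)$ the map $g\mapsto[x,g]$ is a homomorphism $G\to G'$: since all commutators lie in $G'\le\bZ(G)$, the identity $[x,g_1g_2]=[x,g_2]\,[x,g_1]^{g_2}$ reduces to $[x,g_1g_2]=[x,g_1]\,[x,g_2]$. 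Its kernel is $\bC_G(x)$, and its image $[x,G]$ is a nontrivial subgroup of $G'$, hence equals $G'$; therefore $|x^G|=|G:\bC_G(x)|=|[x,G]|=p$. Thus $G$ has exactly $|\bZ(G)|$ classes of size $1$ and $(|G|-|\bZ(G)|)/p$ classes of size $p$, so
\[
\Pro(G)=\frac{k(G)}{|G|}=\frac{|\bZ(G)|}{|G|}+\frac1p\left(1-\frac{|\bZ(G)|}{|G|}\right)=\frac1p+\frac{p-1}{p\,|G:\bZ(G)|},
\]
which is the asserted formula. From it, $\Pro(G)>1/p$ is immediate, and for the upper bound it remains to note $|G:\bZ(G)|\ge p^2$: the quotient $G/\bZ(G)$ is noncyclic, hence of composite order, and every prime dividing it is at least $p$, so $|G:\bZ(G)|\ge p^2$; substituting this into the formula gives $\Pro(G)\le 1/p+(p-1)/p^3=(p^2+p-1)/p^3$.

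Conversely, assume $1/p<\Pro(G)\le(p^2+p-1)/p^3$. Since $(p^2+p-1)/p^3<1$, the group $G$ is nonabelian, so $|G'|>1$; as $p$ is the smallest prime dividing $|G|$, this forces $|G'|\ge p$. If $|G'|\ne p$, then $|G'|\ge p+1$, and Lemma \ref{bound} yields
\[
\Pro(G)\le\frac{1+(p^2-1)/|G'|}{p^2}\le\frac{1+(p^2-1)/(p+1)}{p^2}=\frac{1+(p-1)}{p^2}=\frac1p,
\]
contradicting the hypothesis. Hence $|G'|=p$, and the formula for $\Pro(G)$ then comes from the first part of the argument.

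I do not anticipate a genuine obstacle: once one understands the class sizes of a group whose derived subgroup is central of prime order, everything is elementary bookkeeping. The two points demanding a little care are the inequality $|G:\bZ(G)|\ge p^2$ (which is precisely what makes the bound $(p^2+p-1)/p^3$ tight, consistently with Lemma \ref{lem1}) and keeping track of the role of each hypothesis in the converse: the upper bound is used only to exclude the abelian case, whereas the lower bound, through Lemma \ref{bound}, is what pins $|G'|$ down to exactly $p$.
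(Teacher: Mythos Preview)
Your proof is correct and complete; the logic in both directions is sound, and the class-size count via the commutator homomorphism is a clean, self-contained argument.

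The paper's own proof differs from yours in the reverse direction. Where you show directly that every non-central conjugacy class has size $p$ (using that $g\mapsto[x,g]$ is a surjective homomorphism onto $G'$), the paper instead invokes character theory: from $G'\le\bZ(G)$ and \cite[Problem~2.13]{Isaacscar} one has $\chi(1)^2=|G:\bZ(G)|$ for every nonlinear $\chi\in\Irr(G)$, and then the identity $|G|=\sum_\chi\chi(1)^2$ with $|G/G'|=|G|/p$ linear characters yields the same formula. Your route is more elementary, needing only the class-$2$ commutator identity rather than representation theory. A second, minor difference: the paper obtains the upper bound $\Pro(G)\le(p^2+p-1)/p^3$ by citing Lemma~\ref{lem1} (which ultimately comes from \cite{Orig}), whereas you extract it directly from the explicit formula together with $|G:\bZ(G)|\ge p^2$. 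Both approaches reach the same formula with comparable effort; yours has the advantage of being character-free and of making the extremal case $|G:\bZ(G)|=p^2$ transparent.
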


\begin{proof}
By Lemma \ref{lem1} we may assume that $G$ is non-abelian. Assume
that $|G'|>p$. Then $|G'|\geq p+1$ and hence, applying Lemma
\ref{bound}, we have $\Pro(G)\leq \frac{1}{p}$. The only if part is
therefore done.

Conversely, assume that $|G'|=p$. Then $G'\leq \bZ(G)$ by Lemma
\ref{Gprima}. By \cite[Problem 2.13]{Isaacscar}, we have
    $\chi(1)^2=|G:\bZ(G)|$ for every $\chi\in \Irr(G)$ with $\chi(1)>1$. We deduce that
    $$|G| = \sum_{\chi \in \mathrm{Irr}(G)} \chi(1)^2 = |G|/p +
    |G:\bZ(G)|(k(G) - |G|/p),$$
    and it follows that
    $$\Pro(G)=\frac{1}{p}+\frac{p-1}{p|G:\bZ(G)|}>\frac{1}{p},$$ as stated.
\end{proof}

\begin{remark} It is worth noting that if $G/\bZ(G)\cong C_{p} \times C_{p}$, then, by Lemma
    \ref{lem1}, we have $\Pro(G)=\frac{p^2+p-1}{p^3}>\frac{1}{p}$, and
    hence $|G'|=p$ by Theorem \ref{firstpart}.
\end{remark}

Let us denote $$g_p(x):=\frac{1 + (p^{2} -1)/x}{p^2}.$$
We note that the function $g_p(x)$ is decreasing in terms of $x$. Also, $g_{p}(1) = 1$, $g_p(p)
=\frac{p^2+p-1}{p^3}$, and $g_p(p+1)=\frac{1}{p}$. These values of $g_p$, that appear in our main result, explain the relevance of $g_p$.

The next theorem could be compared with a result of Lescot \cite{Lescot} stating that $\Pro(G)=\frac{1}{2}$ if and only if $G$
is isoclinic to the symmetric group $\Sigma_3$.

\begin{thm}
    \label{n(p)}
    Let $G$ be a finite group and  $p$ the smallest prime dividing $|G|$.
    If $|G'|>p$, then
    $$\Pro(G)\leq  \frac{n(p)+p^2-1}{p^2n(p)}\leq \frac{1}{p},$$
    where $n(p)$ denotes the smallest prime larger than
    $p$. Moreover, $\Pro(G)=\frac{1}{p}$ if and only if $p=2$ and $G/\bZ(G)\cong\Sigma_{3}$.
\end{thm}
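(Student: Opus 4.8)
The plan is as follows. For the two displayed inequalities I would first reduce the problem to Lemma~\ref{bound}. Since $|G'|$ divides $|G|$, every prime divisor of $|G'|$ is at least $p$; combined with $|G'|>p$ and Bertrand's postulate (so $n(p)<2p\le p^2$), a short elementary argument — no integer in the interval $(p,n(p))$ has all its prime factors $\ge p$ — shows that $|G'|\ge n(p)$. As the function $g_p$ is decreasing, Lemma~\ref{bound} then yields
$$\Pro(G)\le g_p(|G'|)\le g_p(n(p))=\frac{n(p)+p^2-1}{p^2n(p)}.$$
The second inequality $g_p(n(p))\le 1/p$ is purely elementary: after clearing denominators it reduces to $(p-1)(p+1)\le (p-1)\,n(p)$, i.e. $n(p)\ge p+1$, which holds because $n(p)$ is a prime strictly larger than $p$; moreover the two bounds coincide with $1/p$ exactly when $n(p)=p+1$, that is, precisely when $p=2$.

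For the equality statement, suppose $\Pro(G)=1/p$ with $|G'|>p$. Then every inequality above is an equality, so $p=2$, and since $g_p$ is strictly decreasing, $|G'|=n(2)=3$; hence $G'\cong C_3$. Next I would show that $C:=\cent{G}{G'}$ is an abelian normal subgroup of index $2$ in $G$ and that $G$ is non-abelian. Indeed $G/C$ embeds in $\Aut(C_3)\cong C_2$; and if $C=G$ (i.e. $G'\le\bZ(G)$), or more generally if $C'\neq 1$ (forcing $C'=G'\cong C_3$), then $C$ has nilpotency class $\le 2$, hence is nilpotent, with a non-abelian Sylow $3$-subgroup, so $\Pro(C)=\prod_\ell \Pro(\Syl_\ell(C))\le \frac{3^2+3-1}{3^3}=\frac{11}{27}$ by Lemma~\ref{lem1}; but $\Pro(G)\le \Pro(C)\,\Pro(G/C)=\Pro(C)$, contradicting $\Pro(G)=1/2$. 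With $C$ abelian of index $2$ and $G$ non-abelian, a direct count of conjugacy classes inside and outside $C$ (using $\cent{G}{x}\in\{C,G\}$ for $x\in C$ and $|\cent{G}{x}|=2|\bZ(G)|$ for $x\notin C$) gives
$$\Pro(G)=\frac14+\frac{3}{4|G'|},\qquad |C/\bZ(G)|=|G'|,$$
so $|G/\bZ(G)|=2|G'|=6$; being non-abelian, $G/\bZ(G)\cong\Sigma_3$. Conversely, if $p=2$ and $G/\bZ(G)\cong\Sigma_3$, the preimage of $(G/\bZ(G))'$ is an abelian normal subgroup of index $2$ with $G'\cong C_3$, so $|G'|=3>p$, and the same class count gives $\Pro(G)=\frac14+\frac14=\frac12=1/p$.

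The two inequalities are essentially immediate from Lemma~\ref{bound}, so the real work — and the step I expect to be the main obstacle — is the equality case: proving that $\cent{G}{G'}$ is abelian (ruling out the class-$2$ nilpotent alternative via the commuting-probability bound for $3$-groups) and then reading off $|G/\bZ(G)|=6$ from the conjugacy-class count for a group with an abelian subgroup of index $2$. As an alternative to this hands-on argument one could instead invoke Lescot's theorem that $\Pro(G)=1/2$ if and only if $G$ is isoclinic to $\Sigma_3$, together with the observation that isoclinism with the centreless group $\Sigma_3$ forces $G/\bZ(G)\cong\Sigma_3$.
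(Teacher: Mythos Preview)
Your proposal is correct. The treatment of the two inequalities is essentially identical to the paper's: both use Lemma~\ref{bound}, the observation that $|G'|>p$ together with $n(p)<p^2$ forces $|G'|\ge n(p)$, and the monotonicity of $g_p$; you simply spell out the elementary reason (``no integer in $(p,n(p))$ has all prime factors $\ge p$'') that the paper leaves implicit.

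Where you genuinely diverge from the paper is in the equality characterisation. The paper handles the implication $\Pro(G)=\tfrac12\Rightarrow G/\bZ(G)\cong\Sigma_3$ by invoking Lescot's classification of groups with $\Pro(G)=\tfrac12$, and the converse by showing that all Sylow subgroups of such a $G$ are abelian and then quoting \cite[Lemma~2(xiii)]{GR} to get $\Pro(G)=\Pro(G/\bZ(G))=\Pro(\Sigma_3)=\tfrac12$. Your argument is instead entirely self-contained: having deduced $p=2$ and $|G'|=3$ from equality in the chain, you rule out $C'\ne1$ via the nilpotent factorisation $\Pro(C)=\prod_\ell\Pro(P_\ell)\le\tfrac{11}{27}$ and the submultiplicativity $\Pro(G)\le\Pro(C)\Pro(G/C)$ (which is Lemma~\ref{dpi}(ii) with $\pi$ the set of all primes), then perform an explicit conjugacy-class count for a non-abelian group with an abelian index-$2$ subgroup to obtain $\Pro(G)=\tfrac14+\tfrac{3}{4|C:\bZ(G)|}$ and $|C:\bZ(G)|=|G'|$, hence $|G/\bZ(G)|=6$. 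The same count handles the converse once one checks that $G/\bZ(G)\cong\Sigma_3$ forces $|G'|=3$ (which follows, as you implicitly use, from the constraint $bwb^{-1}=w$ on $w=a^3\in\bZ(G)$). The trade-off is clear: the paper's route is short but imports two external results, while yours is longer but elementary and stays within the paper's toolkit. Your final remark that Lescot's theorem gives an alternative shortcut is exactly how the paper proceeds.
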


\begin{proof}
    By Bertrand's postulate, we know that $n(p)<2p\leq p^2$.
    Therefore, if $|G'|>p$ then $|G'|\geq n(p)$ and hence, applying Lemma \ref{bound},
    we have
    $$\Pro(G)\leq g_p(n(p))= \frac{n(p)+p^2-1}{p^2n(p)}.$$
    The second inequality holds because $g_p(n(p))\leq g_p(p+1)=\frac{1}{p}$.

    Suppose that $\Pro(G)=\frac{1}{p}$. This forces $n(p)=p+1$, which implies that
    $p=2$ and $|G'|=3$. We
    claim that $\Pro(G)=\frac{1}{2}$ if and only if
    $G/\bZ(G)=\Sigma_{3}$. Assume first that $G/\bZ(G)=\Sigma_{3}$. Let $q$ be a prime dividing
    $|G|$ and let $Q\in \Syl_{q}(G)$. Since $G/\bZ(G)=\Sigma_{3}$, we
    deduce that $|Q:\bZ(Q)|\leq q$ and hence $Q$ is abelian. It follows
    that $G$ possesses an abelian Sylow $q$-subgroup for every prime $q$
    dividing $|G|$. Thus, by \cite[Lemma 2(xiii)]{GR}, we have
    \[\Pro(G)=\Pro(G/\bZ(G))=\Pro(\Sigma_{3})=\frac{1}{2}.\] The other direction of the claim follows from the above-mentioned theorem of Lescot \cite{Lescot} since if $G$ is isoclinic to $\Sigma_3$, then $G/\bZ(G)=\Sigma_{3}$.
    \end{proof}


\section{Hall $\pi$-subgroups}\label{sec:Hall}

In this section we prove that the second statement of Theorem \ref{thm:main} follows from the first.

Let $\mathcal{D}_\pi$ be the collection of all finite groups $G$
such that $G$ has a Hall $\pi$-subgroup, any two Hall
$\pi$-subgroups of $G$ are conjugate, and any $\pi$-subgroup of $G$
is contained in a Hall $\pi$-subgroup. Of course $\mathcal{D}_\pi$
is everything when $\pi$ is a single prime by Sylow's theorems.
Also, $\mathcal{D}_\pi$ contains all $\pi$-separable groups. The
following easy observation is useful to bound $\dd_{\pi}(G)$ in the
case $G\in \mathcal{D}_{\pi}$.

\begin{lem}\label{lem3}
Let $G$ be a finite group in $\mathcal{D}_{\pi}$. If $H$ is a Hall
$\pi$-subgroup of $G$, then $$\dd_{\pi}(G) \leq \Pro(H).$$
\end{lem}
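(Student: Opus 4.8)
The inequality to establish unwinds to a purely combinatorial statement about conjugacy classes. Since $H$ is a Hall $\pi$-subgroup, $|H| = |G|_\pi$, so $\dd_\pi(G) = k_\pi(G)/|H|$ and $\Pro(H) = k(H)/|H|$; hence the claim is equivalent to $k_\pi(G) \leq k(H)$, i.e. the number of $G$-classes of $\pi$-elements is at most the number of $H$-classes of $H$.

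First I would observe that every element of $H$ is a $\pi$-element, because $|H| = |G|_\pi$ is a $\pi$-number and the order of each element of $H$ divides $|H|$. Next, the defining properties of $\mathcal{D}_\pi$ enter: any $\pi$-element $x \in G$ lies in some $\pi$-subgroup $\langle x\rangle$, which is contained in a Hall $\pi$-subgroup of $G$, and all Hall $\pi$-subgroups of $G$ are $G$-conjugate to $H$; therefore $x$ is $G$-conjugate to an element of $H$.

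Putting these together, I would define a map from the set of conjugacy classes of $H$ to the set of $G$-classes of $\pi$-elements of $G$ by sending the $H$-class of $y \in H$ to the $G$-class of $y$. This is well defined, since $H$-conjugate elements are \emph{a fortiori} $G$-conjugate, and it lands in the set of classes of $\pi$-elements by the first observation. By the previous paragraph it is surjective: every $\pi$-element of $G$ meets $H$. Hence $k_\pi(G) \leq k(H)$, and dividing by $|H| = |G|_\pi$ gives $\dd_\pi(G) \leq \Pro(H)$.

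There is no real obstacle here — the only point requiring the hypothesis $G \in \mathcal{D}_\pi$ (rather than merely the existence of a Hall $\pi$-subgroup) is the surjectivity of the class map, for which one genuinely needs that every $\pi$-subgroup lies in a Hall $\pi$-subgroup and that Hall $\pi$-subgroups form a single conjugacy class.
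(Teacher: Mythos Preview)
Your proof is correct and follows essentially the same approach as the paper's: reduce to $k_\pi(G)\le k(H)$ via $|H|=|G|_\pi$, then use the $\mathcal{D}_\pi$ hypothesis to see that every $G$-class of $\pi$-elements meets $H$, so the well-defined map from $H$-classes to $G$-classes of $\pi$-elements is surjective. The paper's version is simply more compressed.
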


\begin{proof}
Since $|H|=|G|_{\pi}$, it suffices to see that $k_{\pi}(G) \leq
k(H)$. If $x, y \in H$ are not conjugate in $G$, then they cannot be
conjugate in $H$. Since $G \in \mathcal{D}_{\pi}$, every $G$-class of $\pi$-elements has a
representative in $H$.
\end{proof}

From this, we can easily prove Theorem \ref{thm:main} in  case $G
\in \mathcal{D}_{\pi}$.

\begin{thm}\label{teoC1}
Let $\pi$ be a set of primes and $G$ a finite group in
$\mathcal{D}_{\pi}$. Then Theorem \ref{thm:main} holds for $G$.
\end{thm}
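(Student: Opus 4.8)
The plan is to push everything through the commuting probability of a Hall $\pi$-subgroup. Since $G\in\mathcal{D}_\pi$, fix a Hall $\pi$-subgroup $H$ of $G$, so that $|H|=|G|_\pi$, and recall from Lemma~\ref{lem3} that $\dd_\pi(G)\le\Pro(H)$. Consequently any lower bound assumed on $\dd_\pi(G)$ in Theorem~\ref{thm:main} is inherited by $\Pro(H)$, and since $H$ is itself the Hall $\pi$-subgroup whose structure we must describe, the theorem for $G$ becomes a purely local statement about the single finite group $H$. The degenerate case $H=1$ is trivial, so assume $H\ne 1$ and let $q$ be the smallest prime dividing $|H|$; every prime divisor of $|H|$ lies in $\pi$, so $q\ge p$.

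For the first assertion, assume $\dd_\pi(G)>\tfrac1p$, hence $\Pro(H)>\tfrac1p\ge\tfrac1q$. I would split on whether $q>p$ or $q=p$. If $q>p$, an elementary inequality (using $q\ge p+1$) shows $\tfrac1p>\tfrac{q^2+q-1}{q^3}$, so $\Pro(H)$ exceeds the bound of Lemma~\ref{lem1} for a group with smallest prime divisor $q$, and that lemma forces $H$ to be abelian. If $q=p$, then either $\Pro(H)>\tfrac{p^2+p-1}{p^3}$, and Lemma~\ref{lem1} again makes $H$ abelian; or $\tfrac1p<\Pro(H)\le\tfrac{p^2+p-1}{p^3}$, in which case Theorem~\ref{firstpart} yields $|H'|=p$ and then Lemma~\ref{Gprima} yields that $H$ is nilpotent. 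In every branch $H$ is a nilpotent Hall $\pi$-subgroup of $G$ with $|H'|\le p$, which is the first conclusion of Theorem~\ref{thm:main}.

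For the second assertion, assume $\dd_\pi(G)>\tfrac{p^2+p-1}{p^3}$, hence $\Pro(H)>\tfrac{p^2+p-1}{p^3}$. Since $x\mapsto\tfrac{x^2+x-1}{x^3}$ is decreasing for $x>1$ and $q\ge p$, we get $\Pro(H)>\tfrac{p^2+p-1}{p^3}\ge\tfrac{q^2+q-1}{q^3}$, so $\Pro(H)$ again beats the bound of Lemma~\ref{lem1} for smallest prime $q$, and $H$ is abelian; thus $G$ has an abelian Hall $\pi$-subgroup.

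The bulk of the argument is bookkeeping, since the real work is already contained in Lemmas~\ref{lem1} and~\ref{Gprima} and in Theorem~\ref{firstpart}; the only genuine subtlety is that $p$, being merely the smallest element of $\pi$, need not divide $|H|$ at all, and this is precisely why the case $q>p$ has to be treated separately. Verifying the two elementary comparisons — $\tfrac1p>\tfrac{q^2+q-1}{q^3}$ for $q>p$ and the monotonicity of $\tfrac{x^2+x-1}{x^3}$ — is the most computational part, but presents no real obstacle.
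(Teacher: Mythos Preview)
Your proof is correct and follows essentially the same route as the paper: reduce to $\Pro(H)$ via Lemma~\ref{lem3}, then invoke Lemma~\ref{lem1}, Theorem~\ref{firstpart}, and Lemma~\ref{Gprima}. The one place you are more careful than the paper is in distinguishing the smallest prime $p$ of $\pi$ from the smallest prime $q$ dividing $|H|$; the paper's proof tacitly applies Lemma~\ref{lem1} and Theorem~\ref{firstpart} as if $p$ were the latter, whereas you verify the two needed inequalities (that $\tfrac{1}{p}>\tfrac{q^{2}+q-1}{q^{3}}$ when $q>p$, and monotonicity of $\tfrac{x^{2}+x-1}{x^{3}}$) so that the lemmas genuinely apply with the correct minimal prime $q$. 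This extra care is warranted and costs nothing.
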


\begin{proof}
By hypothesis, $G$ has a Hall $\pi$-subgroup $H$ and all the Hall
$\pi$-subgroups of $G$ are $G$-conjugates of $H$. Thus, by Lemma
\ref{lem3}, we have $\dd_{\pi}(G) \leq \Pro(H)$. Let $p$ be the
smallest prime in $\pi$. Assume that $\dd_{\pi}(G)> \frac{1}{p}$. We
then have
$$\Pro(H)>\frac{1}{p}.$$
Theorem \ref{firstpart} and Lemma \ref{Gprima} then imply that
$|H'|\leq p$ and $H$ is nilpotent, as claimed. Moreover, if
$\dd_{\pi}(G)> \frac{p^2+p-1}{p^3}$ then $H$ is abelian by Lemma \ref{lem1}.
\end{proof}

As a consequence of Theorem \ref{teoC1}, we have that Theorem
 \ref{thm:main} holds if $\pi=\{p\}$ or if $G$ is $\pi$-separable.

We also recall some facts on the groups in $\mathcal{D}_\pi$.
The first one is a result of Wielandt \cite{Wielandt} mentioned in
the Introduction and the second one is due to Hall \cite[Theorem
D5]{Hall}.

\begin{lem}\label{Wielandt}
Let $G$ be a finite group and  $\pi$ a set of primes.

\begin{enumerate}

\item If $G$ possesses a nilpotent Hall $\pi$-subgroup, then  $G \in
\mathcal{D}_{\pi}$.

\item If $N$ possesses nilpotent Hall $\pi$-subgroups, $G/N$ possesses
solvable Hall $\pi$-subgroups, and $G/N \in \mathcal{D}_{\pi}$, then
$G \in \mathcal{D}_{\pi}$.
\end{enumerate}
\end{lem}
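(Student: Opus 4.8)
The statement to prove is Lemma~\ref{Wielandt}, which collects two facts: part (i) is Wielandt's theorem that a nilpotent Hall $\pi$-subgroup forces $G\in\mathcal{D}_\pi$, and part (ii) is Hall's extension result (his Theorem D5). Since both are classical results with standard published proofs, the natural plan is to cite them rather than reprove them from scratch; but I will sketch how one argues them, in case a self-contained treatment is wanted.

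\medskip

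\noindent\textbf{Plan for part (i).} The plan is to reduce to the solvable-normal-subgroup situation via the Schur--Zassenhaus theorem and an induction on $|G|$. Let $H$ be a nilpotent Hall $\pi$-subgroup of $G$. First I would dispose of the case $\oh{\pi'}{G}\neq 1$: passing to $\bar G = G/\oh{\pi'}{G}$, the image $\bar H$ is still a nilpotent Hall $\pi$-subgroup, by induction $\bar G\in\mathcal{D}_\pi$, and since $\oh{\pi'}{G}$ is a normal $\pi'$-group the preimages of Hall $\pi$-subgroups and the conjugacy/containment statements lift back to $G$ (using Schur--Zassenhaus inside the preimage of $\bar H$ to split off the $\pi'$-part). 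So assume $\oh{\pi'}{G}=1$. Next, because $H$ is nilpotent, $\oh{p}{H}$ is a Sylow $p$-subgroup of $H$ for each $p\in\pi$, and a Frattini-type argument together with the fact that $H$ normalises each of its Sylow subgroups shows that $\oh{\pi}{G}\neq 1$: indeed one shows some $\oh{p}{G}\neq 1$ by studying $\bO_{p',p}$-type layers, which is the technical heart. Then set $N=\oh{p}{G}$, pass to $G/N$, note $HN/N$ is a nilpotent Hall $\pi$-subgroup, apply induction to get $G/N\in\mathcal{D}_\pi$, and finally lift: any $\pi$-subgroup $K$ of $G$ has $KN/N$ contained in a conjugate of $HN/N$, and then a Sylow argument in the (solvable-by-the-normal-$p$-group) preimage places $K$ inside a conjugate of $H$; the same bookkeeping gives conjugacy of Hall $\pi$-subgroups. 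The main obstacle here is the step producing a nontrivial normal $\pi$-subgroup when $\oh{\pi'}{G}=1$ --- this is exactly where Wielandt's original argument does real work, and I would simply quote \cite{Wielandt} rather than reconstruct it.

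\medskip

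\noindent\textbf{Plan for part (ii).} Here I would again induct on $|G|$ and split into the cases $N\neq 1$ handled by part (i)'s technology and the passage to $G/N$. Concretely: by hypothesis $N$ has nilpotent Hall $\pi$-subgroups, so by part (i) we already know $N\in\mathcal{D}_\pi$; let $H_N$ be one such, and let $\bar G = G/N$, which lies in $\mathcal{D}_\pi$ with solvable Hall $\pi$-subgroup $\bar H$. Pull back $\bar H$ to its preimage $H^*$ in $G$, so that $H^*/N\cong \bar H$ is solvable and $N$ has nilpotent (hence solvable) Hall $\pi$-subgroups, whence $H^*$ is $\pi$-separable and therefore possesses a Hall $\pi$-subgroup $H$ with $HN/N = \bar H$ and $H\cap N = H_N$. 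That produces a Hall $\pi$-subgroup of $G$. For conjugacy: given two Hall $\pi$-subgroups $H_1,H_2$ of $G$, their images in $\bar G$ are $\bar G$-conjugate (as $\bar G\in\mathcal{D}_\pi$), so after adjusting we may assume $H_1N=H_2N=H^*$; then $H_1,H_2$ are Hall $\pi$-subgroups of the $\pi$-separable group $H^*$ and hence conjugate there by Hall's theorem for solvable-type groups (using that $N$ contributes conjugate nilpotent Hall $\pi$-subgroups and the quotient is solvable). The containment statement is analogous: a $\pi$-subgroup $K\le G$ maps into a conjugate of $\bar H$, reduce to $KN\le H^*$, and inside the $\pi$-separable $H^*$ every $\pi$-subgroup lies in a Hall $\pi$-subgroup. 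The main obstacle is organising the ``$\pi$-separability of $H^*$'' step cleanly: one needs that an extension of a group with nilpotent Hall $\pi$-subgroups by a solvable group has the $\mathcal{D}_\pi$ property, which is precisely the content of \cite[Theorem D5]{Hall}, so in the paper I would simply invoke that reference.

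\medskip

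\noindent In short, the proof is: ``Part (i) is \cite{Wielandt} and part (ii) is \cite[Theorem D5]{Hall},'' and the sketch above indicates the inductive Schur--Zassenhaus/$\oh{\pi'}{G}$-reduction underlying both. I would not grind through the lifting bookkeeping in the paper itself since these are standard and well-documented.
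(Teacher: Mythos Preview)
Your proposal is correct and matches the paper's approach exactly: the paper does not prove this lemma but simply records it as a recollection of known results, attributing (i) to Wielandt \cite{Wielandt} and (ii) to Hall \cite[Theorem D5]{Hall}. Your sketches go beyond what the paper does, but your bottom line---cite the two references and move on---is precisely what the authors chose.
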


\begin{thm}\label{equivalency}
    \label{3.4}
The second statement of Theorem \ref{thm:main} follows from the first.
\end{thm}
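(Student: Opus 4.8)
The plan is to bootstrap from the first statement using Wielandt's theorem, so as to land in the situation $G\in\mathcal{D}_\pi$ already disposed of in Theorem \ref{teoC1}. Concretely, assume the first statement of Theorem \ref{thm:main} and let $p$ be the smallest prime in $\pi$; suppose $\dd_\pi(G)>\frac{p^2+p-1}{p^3}$. First I would record the trivial inequality $\frac{p^2+p-1}{p^3}=\frac1p+\frac{p-1}{p^3}>\frac1p$, so the hypothesis in force is stronger than $\dd_\pi(G)>\frac1p$; the first statement of Theorem \ref{thm:main} then applies and hands us a \emph{nilpotent} Hall $\pi$-subgroup $H$ of $G$ (with $|H'|\le p$, a bound we will not need explicitly).

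The key move is then to apply the first part of Lemma \ref{Wielandt}: having a nilpotent Hall $\pi$-subgroup places $G$ in $\mathcal{D}_\pi$. At that point I would simply quote Theorem \ref{teoC1}, which establishes all of Theorem \ref{thm:main} for groups in $\mathcal{D}_\pi$; in particular the standing hypothesis $\dd_\pi(G)>\frac{p^2+p-1}{p^3}$ yields an abelian Hall $\pi$-subgroup of $G$, which is exactly the second statement. If one prefers to be self-contained, one unwinds the proof of Theorem \ref{teoC1} instead: Lemma \ref{lem3} gives $\Pro(H)\ge\dd_\pi(G)>\frac{p^2+p-1}{p^3}$, and then Lemma \ref{lem1}, applied with the smallest prime dividing $|H|$ (which is at least $p$ since $H$ is a $\pi$-group), together with the monotonicity of $q\mapsto\frac{q^2+q-1}{q^3}$, forces $H$ to be abelian.

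I do not expect any genuine obstacle here: the whole point of isolating the first statement is that, once a nilpotent Hall $\pi$-subgroup is known to exist, Wielandt's theorem converts all the hard structural information into the harmless hypothesis $G\in\mathcal{D}_\pi$, after which the commuting-probability estimates of Section \ref{sec:preliminary} do the rest. The only thing to watch is the degenerate case $|G|_\pi=1$, where $H$ is trivial and the conclusion is vacuous.
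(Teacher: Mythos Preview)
Your proposal is correct and matches the paper's proof essentially line for line: assume $\dd_\pi(G)>\frac{p^2+p-1}{p^3}>\frac{1}{p}$, invoke the first statement to obtain a nilpotent Hall $\pi$-subgroup, apply Lemma \ref{Wielandt}(i) to get $G\in\mathcal{D}_\pi$, and finish with Theorem \ref{teoC1}. The extra remarks you add (the explicit inequality, the unwound alternative via Lemmas \ref{lem3} and \ref{lem1}, and the degenerate case) are fine but not needed.
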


\begin{proof}
 Let $G$ be a group with
 $\dd_{\pi}(G)>\frac{p^2+p-1}{p^3}>\frac{1}{p}$.
 By hypothesis,
$G$ possesses  a nilpotent Hall $\pi$-subgroup. It then follows that
$G \in \mathcal{D}_{\pi}$ by Lemma \ref{Wielandt}. The result follows by Theorem
\ref{teoC1}.
\end{proof}

The rest of the paper is therefore devoted to prove that $G$ has a
nilpotent Hall $\pi$-subgroup under the condition
$\dd_{\pi}(G)>\frac{1}{p}$.


\section{Reducing to a problem on simple groups}\label{sec:reducing}

In this section we prove Theorem \ref{thm:main}, assuming a result on finite simple
groups.

\subsection{Reducing to simple groups}

We begin by recalling two properties of $\dd_{\pi}(G)$. The first
one is \cite[Proposition 5]{MN}, essentially due to Robinson.
The second is due to Fulman and Guralnick \cite[Lemma 2.3]{FG}.

\begin{lem}\label{dpi}
Let $G$ be a finite group and $\pi$ a set of primes.

\begin{enumerate}[\rm(i)]
\item Let $\mu \subseteq \pi$. Then
$\dd_{\pi}(G)\leq \dd_{\mu}(G)$.

\item
$\dd_{\pi}(G) \leq \dd_{\pi}(N)\dd_{\pi}(G/N)$ for any normal
subgroup $N$ of $G$.
\end{enumerate}
\end{lem}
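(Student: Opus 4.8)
Both parts are elementary counting arguments; since (i) is \cite[Proposition 5]{MN} and (ii) is \cite[Lemma 2.3]{FG}, I only indicate the routes I would take. For part (i) the plan is to reduce everything to Robinson's bound $\dd_\rho(H)\le 1$, valid for every finite group $H$ and every set of primes $\rho$ and quoted in the Introduction. Put $\nu=\pi\setminus\mu$. Each $\pi$-element $x$ factors uniquely as $x=x_\mu x_\nu$ with $x_\mu$ its $\mu$-part and $x_\nu$ its $\nu$-part; these are powers of $x$, hence commute. Sending the $G$-class of a $\pi$-element $x$ to the $G$-class of $x_\mu$ defines a map from the $\pi$-classes of $G$ to the $\mu$-classes of $G$, and I would bound its fibres: given a $\mu$-element $y$, every $\pi$-class over $y^G$ has a representative $yz$ with $z$ a $\nu$-element of $\bC_G(y)$, and $yz$, $yz'$ are $G$-conjugate only when $z$, $z'$ are $\bC_G(y)$-conjugate (compare $\mu$-parts); hence the fibre over $y^G$ has size at most $k_\nu(\bC_G(y))\le |\bC_G(y)|_\nu\le |G|_\nu$, the middle step being Robinson's bound applied to $\bC_G(y)$. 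Summing over the $k_\mu(G)$ $\mu$-classes gives $k_\pi(G)\le |G|_\nu\,k_\mu(G)$, and dividing by $|G|_\pi=|G|_\mu|G|_\nu$ yields $\dd_\pi(G)\le\dd_\mu(G)$. The only point requiring care is that fusion in $G$ can only shrink a fibre, which is why one gets an inequality rather than an identity; I do not expect a real difficulty here.

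For part (ii), since $|G|_\pi=|N|_\pi\,|G/N|_\pi$ the assertion is equivalent to $k_\pi(G)\le k_\pi(N)\,k_\pi(G/N)$, which I would prove by a Gallagher-style fibering over $\overline G:=G/N$. The projection $G\to\overline G$ carries $\pi$-elements to $\pi$-elements and so induces a map from the $\pi$-classes of $G$ to the $\pi$-classes of $\overline G$. Fixing a $\pi$-class $\overline x^{\,\overline G}$ and any lift $x\in G$ of $\overline x$, an orbit--stabilizer argument identifies the set of $G$-classes of $\pi$-elements of $G$ mapping into $\overline x^{\,\overline G}$ with the set of $T$-orbits on the $\pi$-elements of the single coset $xN$, where $T$ is the full preimage of $\bC_{\overline G}(\overline x)$ in $G$ (note $T$ normalizes $N$ and permutes $xN$ by conjugation). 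Since $N\trianglelefteq T$, the number of $T$-orbits is at most the number of $N$-orbits on the $\pi$-elements of $xN$, and I would finish with the bound
\[
\#\{\, N\text{-orbits on the }\pi\text{-elements of }xN \,\}\;\le\;k_\pi(N);
\]
summing over the $k_\pi(\overline G)$ $\pi$-classes of $\overline G$ then gives the claim.

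The step I expect to be the main obstacle is exactly this last inequality. Identifying $xN$ with $N$, the $N$-conjugation action becomes twisted conjugation by the cocycle $n\mapsto nxn^{-1}x^{-1}\in N$, so what is needed is the $\pi$-restricted analogue of the standard fact (already implicit in Gallagher's proof of $k(G)\le k(N)k(G/N)$) that the number of twisted $N$-conjugacy classes is at most $k(N)$ --- now with everything cut down to $\pi$-elements. A related subtlety, which must be dealt with but causes no harm, is that the coset $xN$ may contain no $\pi$-element at all, since a $\pi$-element of $\overline G$ need not lift to a $\pi$-element of $G$; in that case the corresponding fibre is empty and there is nothing to check.
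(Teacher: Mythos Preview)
The paper does not actually prove this lemma; it merely cites \cite[Proposition~5]{MN} for (i) and \cite[Lemma~2.3]{FG} for (ii). So there is no in-paper argument to compare against.

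Your plan for (i) is complete and correct: the fibre bound $k_{\nu}(\bC_G(y))\le |\bC_G(y)|_{\nu}\le |G|_{\nu}$ via Robinson's inequality is exactly the standard route, and the rest is bookkeeping.

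Your plan for (ii) has the right architecture --- fibering over $\overline G$, passing from $T$-orbits to $N$-orbits on a single coset --- and you have correctly isolated the crux as the inequality
\[
\#\{\,N\text{-orbits on the }\pi\text{-elements of }xN\,\}\;\le\;k_{\pi}(N).
\]
But this step is not a formality, and your plan does not supply an argument for it. The natural first attempt, comparing the two Burnside sums term by term over $n\in N$ (i.e.\ showing that each coset $xN\cap \bC_G(n)$ of $\bC_N(n)$ contains no more $\pi$-elements than $\bC_N(n)$ itself), fails already for $G=\Al_4$, $N=V_4$, $x=(123)$, $\pi=\{3\}$: at $n=e$ one is comparing four $\pi$-elements in $xN$ with a single $\pi$-element in $N$, even though the orbit counts themselves agree ($1\le 1$). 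So the $\pi$-restricted twisted-conjugacy bound genuinely requires an idea beyond the unrestricted Gallagher argument you invoke; this is precisely the content of \cite[Lemma~2.3]{FG}, and your plan reduces to it without reproducing it.
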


\begin{lem}\label{Sylowab}
Let $G$ be a finite group, $\pi$  a set of primes, and $p$ the
smallest prime in $\pi$. Let $q\in\pi$ and $Q \in \Syl_{q}(G)$.
Suppose $\dd_{\pi}(G)>\frac{1}{p}$. We have

\begin{enumerate}[\rm(i)]
\item $Q/\bZ(Q)$ is abelian and $|Q'|\leq q$.
\item If $q \in \pi \setminus \{p\}$, then $Q$ is abelian.
\end{enumerate}
\end{lem}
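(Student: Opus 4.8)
The plan is to reduce to the case where $\pi$ consists of a single prime, where we can invoke the results on $\Pro$ from Section \ref{sec:preliminary}. First I would apply Lemma \ref{dpi}(i) with $\mu = \{q\}$: since $q \in \pi$, we get $\dd_{\{q\}}(G) \geq \dd_{\pi}(G) > 1/p$. Now $\dd_{\{q\}}(G) = k(Q')/|Q|_q$ is just $\Pro(Q)$ by Sylow's theorem — here I would spell this out using Lemma \ref{lem3} (applied with $\pi = \{q\}$, so $G \in \mathcal{D}_{\{q\}}$ automatically) to get $\dd_{\{q\}}(G) \leq \Pro(Q)$; conversely, distinct $Q$-classes of $q$-elements (i.e.\ all $Q$-classes) that fuse in $G$ would still... actually for the inequality we only need $\dd_{\{q\}}(G) \leq \Pro(Q)$, which is what Lemma \ref{lem3} gives. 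So $\Pro(Q) \geq \dd_{\{q\}}(G) > 1/p$.

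For part (i): since $p$ is the smallest prime in $\pi$ and $q \in \pi$, we have $p \leq q$, hence $1/p \geq 1/q$, so $\Pro(Q) > 1/p \geq 1/q$. Now $q$ is the smallest (indeed the only) prime dividing $|Q|$, so the remark preceding Lemma \ref{lem1} — namely that $\Pro(Q) > 1/q$ forces $Q$ nilpotent, together with Theorem \ref{firstpart} — applies. If $Q$ is abelian we are done; otherwise Theorem \ref{firstpart} gives $1/q < \Pro(Q) \leq \tfrac{q^2+q-1}{q^3}$ provided $\Pro(Q) \leq \tfrac{q^2+q-1}{q^3}$, which holds by Lemma \ref{lem1}. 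Hence $|Q'| = q$, and then $Q' \leq \bZ(Q)$ by Lemma \ref{Gprima}, so $Q/\bZ(Q)$ is abelian. In either case $|Q'| \leq q$ and $Q/\bZ(Q)$ is abelian, proving (i).

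For part (ii): assume $q \in \pi \setminus \{p\}$, so $p < q$, whence $1/p > 1/q$ but more importantly I claim $1/p \geq \tfrac{q^2+q-1}{q^3}$. Indeed $\tfrac{q^2+q-1}{q^3} < \tfrac{q^2+q}{q^3} = \tfrac{q+1}{q^2} \leq \tfrac{q}{q(q-1)}$... let me instead argue directly: since $p \leq q-1$ (as $p<q$ and both are integers, though $p=2,q=3$ gives $p=q-1$), we have $\tfrac{1}{p} \geq \tfrac{1}{q-1} = \tfrac{q^2}{q^2(q-1)} \geq \tfrac{q^2+q-1}{q^3}$, the last inequality being equivalent to $q^3 \geq (q^2+q-1)(q-1) = q^3 - 1$, which is true. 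Therefore $\Pro(Q) > 1/p \geq \tfrac{q^2+q-1}{q^3}$, and by the contrapositive of Lemma \ref{lem1} (with $p$ there replaced by the smallest prime $q$ dividing $|Q|$), $Q$ must be abelian.

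The only genuinely delicate point is the chain of elementary inequalities in part (ii) showing $\tfrac{1}{p} \geq \tfrac{q^2+q-1}{q^3}$ whenever $p < q$; once that is in place, everything else is a direct appeal to Lemma \ref{dpi}(i), Lemma \ref{lem3}, Lemma \ref{lem1}, Theorem \ref{firstpart}, and Lemma \ref{Gprima}. I do not expect any serious obstacle.
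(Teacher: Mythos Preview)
Your approach is essentially identical to the paper's: reduce from $\pi$ to $\{q\}$ via Lemma~\ref{dpi}(i), pass to $\Pro(Q)$ via Lemma~\ref{lem3} (Sylow gives $G\in\mathcal{D}_{\{q\}}$), then invoke Theorem~\ref{firstpart} with Lemma~\ref{Gprima} for (i) and the inequality $\tfrac{q^2+q-1}{q^3}<\tfrac{1}{p}$ together with Lemma~\ref{lem1} for (ii). One small arithmetic slip to fix in the write-up: $(q^2+q-1)(q-1)=q^3-2q+1$, not $q^3-1$, but the required inequality $q^3\geq q^3-2q+1$ is of course still true.
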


\begin{proof}
By Sylow's theorems and Lemma \ref{lem3} we have $\dd_{q}(G)\leq
\Pro(Q)$. On the other hand, by Lemma \ref{dpi}(i), we have
$\dd_{\pi}(G)\leq \dd_{q}(G)$. We deduce that
$$\frac{1}{q}\leq \frac{1}{p}<\Pro(Q).$$
Theorem \ref{firstpart} and Lemma \ref{Gprima} now imply that
$Q/\bZ(Q)$ is abelian and $|Q'|\leq q$.

Suppose $q > p$. Then $q\geq p+1$, and one can easily
check that $\frac{q^2+q-1}{q^3}<\frac{1}{p}$. Now
$\Pro(Q)>\frac{q^2+q-1}{q^3}$, and thus $Q$ must be abelian
by Lemma \ref{lem1}.
\end{proof}

The next lemma is  \cite[Lemma 3.1]{Alex}, which allows us to work
with a set of two primes instead of an arbitrary set.

\begin{lem}[Moret\'{o}]\label{Alex}
Let $G$ be a finite group and let $\pi$  a set of primes. If $G$
possesses a nilpotent Hall $\tau$-subgroup for every $\tau\subseteq
\pi$ with $|\tau|=2$, then $G$ possesses a nilpotent Hall
$\pi$-subgroup.
\end{lem}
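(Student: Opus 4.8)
The plan is to deduce this from the class-size criterion for nilpotent Hall subgroups quoted in the Introduction, namely \cite[Theorem B]{BFMMNSST}: for a finite group $G$ and any set of primes $\sigma$, the group $G$ has a nilpotent Hall $\sigma$-subgroup if and only if for every pair of distinct primes $p,q$ in $\sigma$ no conjugacy class of $p$-elements of $G$ has size divisible by $q$. The observation that makes this work is that the condition occurring there --- call it $(*)_{p,q}$, meaning ``no class of $p$-elements of $G$ has size divisible by $q$'' --- refers only to $G$ and to the ordered pair of primes $(p,q)$, and not to the ambient set of primes at all.

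With this in hand the argument is short. First I would apply the criterion to a two-element set $\tau=\{p,q\}\subseteq\pi$: it says that $G$ has a nilpotent Hall $\tau$-subgroup if and only if $(*)_{p,q}$ and $(*)_{q,p}$ both hold. Since by hypothesis $G$ has a nilpotent Hall $\tau$-subgroup for \emph{every} two-element $\tau\subseteq\pi$, we conclude that $(*)_{p,q}$ holds for every ordered pair of distinct primes $p,q\in\pi$. Applying the criterion a second time, now to the set $\pi$ itself, yields a nilpotent Hall $\pi$-subgroup of $G$. (If $\pi$ contains at most one prime dividing $|G|$ there is nothing to prove, and $(*)_{p,q}$ is vacuous whenever $p\nmid|G|$, so no separate bookkeeping is needed.)

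If one prefers to avoid invoking the classification-dependent Theorem B of \cite{BFMMNSST}, there is a partial self-contained route by induction on $|G|$: if $G$ has a solvable minimal normal subgroup $N$ --- necessarily an elementary abelian $r$-group --- one checks that $G/N$ still satisfies the hypothesis for $\pi$, obtains a nilpotent Hall $\pi$-subgroup of $G/N$ by induction, and produces one in $G$ (directly when $r\in\pi$, via Schur--Zassenhaus when $r\notin\pi$); in the first case the resulting Hall $\pi$-subgroup $K$ of $G$ is seen to be nilpotent because for any two primes dividing $|K|$ the product of the corresponding Sylow subgroups of $K$ is a Hall subgroup of $G$ on those two primes, hence nilpotent, using Wielandt's theorem (Lemma~\ref{Wielandt}) to transport the pairwise hypothesis along conjugacy. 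This settles all solvable $G$ without the classification. The genuine obstacle is the case where every minimal normal subgroup of $G$ is a direct product of nonabelian simple groups: there one must know exactly which nonabelian simple groups possess nilpotent Hall subgroups on two primes, and packaging precisely that simple-group input is the role played by \cite[Theorem B]{BFMMNSST}.
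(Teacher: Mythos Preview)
Your argument is correct: the criterion of \cite[Theorem~B]{BFMMNSST} is a conjunction of conditions $(*)_{p,q}$ that refer only to the ordered pair $(p,q)$ and not to the ambient set, so verifying them for every two-element $\tau\subseteq\pi$ is literally the same as verifying them for $\pi$, and the lemma follows in one line.

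The paper, however, does not prove this lemma at all; it simply quotes it as \cite[Lemma~3.1]{Alex} and uses it as a black box. You have thus supplied a proof where the text offers none, and by a route different from the one cited (note that \cite{Alex} predates \cite{BFMMNSST}, so Moret\'o's own argument is necessarily something else). Your closing remarks are also accurate: the inductive reduction handles the case of a solvable minimal normal subgroup cleanly, while the genuine content sits in the nonabelian simple composition factors, which is exactly the input that \cite{BFMMNSST} packages via the classification.
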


\begin{pro}\label{reduc}
Suppose that Theorem \ref{thm:main} is false for  a group $G$. Then
there exists $\pi=\{p,q\}$, where $p<q$ are two primes, such that
$G$ does not possess nilpotent Hall $\pi$-subgroups and for all $P
\in \Syl_{p}(G)$ and $Q\in \Syl_{q}(G)$, $P/\bZ(P)$ is abelian,
$|P'|\leq p$, and $Q$ is abelian.
\end{pro}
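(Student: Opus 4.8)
The plan is to assume Theorem \ref{thm:main} fails and extract a minimal counterexample, then use the lemmas already established to pin down its local structure. First I would invoke Theorem \ref{equivalency}: since the second statement follows from the first, a counterexample to Theorem \ref{thm:main} is a group $G$ with $\dd_\pi(G)>\frac1p$ (where $p$ is the smallest prime in $\pi$) but with no nilpotent Hall $\pi$-subgroup. By Lemma \ref{Alex}, if $G$ had a nilpotent Hall $\tau$-subgroup for every two-element subset $\tau\subseteq\pi$, it would have a nilpotent Hall $\pi$-subgroup; so there must be some pair $\tau=\{p',q'\}\subseteq\pi$ for which $G$ has no nilpotent Hall $\tau$-subgroup. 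By Lemma \ref{dpi}(i), $\dd_\tau(G)\geq\dd_\pi(G)>\frac1p\geq\frac1{p'}$ where $p'=\min\tau$, so $G$ is still a counterexample with the smaller prime set $\tau$. Rename $\tau=\{p,q\}$ with $p<q$. This gives the desired $\pi=\{p,q\}$ with $G$ not possessing nilpotent Hall $\pi$-subgroups.

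Next I would read off the Sylow structure directly from Lemma \ref{Sylowab}, applied with this two-element $\pi$ and the inequality $\dd_\pi(G)>\frac1p$. Part (i) of that lemma gives, for $P\in\Syl_p(G)$, that $P/\bZ(P)$ is abelian and $|P'|\leq p$. Part (ii), applied to the prime $q\in\pi\setminus\{p\}$, gives that $Q$ is abelian for $Q\in\Syl_q(G)$. Since Lemma \ref{Sylowab} holds for \emph{every} Sylow $p$-subgroup and every Sylow $q$-subgroup (they are all conjugate, and the conditions $P/\bZ(P)$ abelian, $|P'|\leq p$, $Q$ abelian are conjugation-invariant), this is exactly the conclusion claimed in Proposition \ref{reduc}.

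There is essentially no obstacle here: the proposition is a bookkeeping step that packages Lemmas \ref{Alex}, \ref{dpi}(i) and \ref{Sylowab} into the form that will be used in the subsequent reduction to simple groups. The only point requiring a moment's care is making sure that passing from the original $\pi$ to the two-element subset $\{p,q\}$ does not lose the hypothesis $\dd_{\{p,q\}}(G)>\frac1p$ — but this is immediate from the monotonicity in Lemma \ref{dpi}(i) together with the fact that $p$ remains the smaller of the two primes. One should also note that the failure of Theorem \ref{thm:main} for $G$ with the set $\pi$ genuinely transfers to failure with the set $\{p,q\}$: indeed the \emph{first} statement of the theorem fails for $\{p,q\}$ by construction (no nilpotent Hall $\{p,q\}$-subgroup), and by Theorem \ref{equivalency} that is all we need.

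\begin{proof}
Suppose Theorem \ref{thm:main} is false for $G$. By Theorem \ref{equivalency} it suffices to treat its first statement, so there is a set of primes $\pi$ with smallest member $p$ such that $\dd_\pi(G)>\frac1p$ yet $G$ has no nilpotent Hall $\pi$-subgroup. By Lemma \ref{Alex}, $G$ must fail to have a nilpotent Hall $\tau$-subgroup for some $\tau\subseteq\pi$ with $|\tau|=2$; write $\tau=\{p',q'\}$ with $p'<q'$. By Lemma \ref{dpi}(i), $\dd_\tau(G)\geq\dd_\pi(G)>\frac1p\geq\frac1{p'}$. Replacing $\pi$ by $\tau$, we may therefore assume $\pi=\{p,q\}$ with $p<q$, that $\dd_\pi(G)>\frac1p$, and that $G$ has no nilpotent Hall $\pi$-subgroup.

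Now fix $P\in\Syl_p(G)$ and $Q\in\Syl_q(G)$. By Lemma \ref{Sylowab}(i) applied to the prime $p$, the quotient $P/\bZ(P)$ is abelian and $|P'|\leq p$. Since $q\in\pi\setminus\{p\}$, Lemma \ref{Sylowab}(ii) gives that $Q$ is abelian. As all Sylow $p$-subgroups (respectively all Sylow $q$-subgroups) of $G$ are conjugate, these conclusions hold for every choice of $P\in\Syl_p(G)$ and $Q\in\Syl_q(G)$. This is precisely the asserted statement.
\end{proof}
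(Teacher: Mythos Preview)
Your proof is correct and follows essentially the same route as the paper's own argument: invoke Theorem~\ref{equivalency} to reduce to the first statement, use Lemma~\ref{Alex} to find a two-prime subset $\{p',q'\}\subseteq\pi$ where the nilpotent Hall subgroup fails, apply Lemma~\ref{dpi}(i) to preserve the inequality $\dd_{\{p',q'\}}(G)>1/p'$, and then read off the Sylow structure from Lemma~\ref{Sylowab}. The only cosmetic difference is that you explicitly note the conjugacy of Sylow subgroups to justify ``for all $P,Q$'', which the paper leaves implicit.
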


\begin{proof}
By Theorem \ref{equivalency}, we may assume that there exists $\pi$,
 a set of primes, such that $\dd_{\pi}(G)>\frac{1}{p}$, but $G$ does
not possess nilpotent Hall $\pi$-subgroups, where $p$ is the
smallest member of $\pi$.

If $G$ has a nilpotent Hall $\tau$-subgroup  for every $\tau
\subseteq \pi$ with $|\tau|=2$, then by Lemma \ref{Alex}, $G$ has
nilpotent Hall $\pi$-subgroups. Thus, there exists $\{q,r\}\subseteq
\pi$ with $q<r$ such that $G$ does not possess a nilpotent Hall
$\{q,r\}$-subgroup. By Lemma \ref{dpi}(i), we also have
$\dd_{\pi}(G)\leq \dd_{\{q,r\}}(G)$, and it follows that
$$ \frac{1}{q} \leq \frac{1}{p} < \dd_{\pi}(G) \leq \dd_{\{q,r\}}(G).$$
Therefore, Theorem \ref{thm:main} fails for $G$ and the set
$\{q,r\}$, and hence we may assume that $|\pi|=2$, that is $\pi=\{p,q\}$ with $p<q$.

Finally, the assertion on the Sylow subgroups follows from Lemma
\ref{Sylowab}.
\end{proof}

\begin{pro}\label{simple}
Let $\pi$ be a set of primes and $p$ the smallest member in $\pi$.
Let $G$ be a finite group with minimal order subject to the
conditions that $\dd_{\pi}(G)>\frac{1}{p}$ and $G$ does not possess
nilpotent Hall $\pi$-subgroups. Then $G$ is non-abelian simple.
\end{pro}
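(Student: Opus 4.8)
The plan is to exploit the given minimality of $G$ together with the fact that both a normal subgroup and a quotient of $G$ inherit a bound of the shape $\dd_{\pi}(\,\cdot\,)>\frac{1}{p}$. First I would note that $G$ cannot be nilpotent: a nilpotent group is the direct product of its Sylow subgroups and so has a normal, nilpotent Hall $\pi$-subgroup, contrary to the hypothesis on $G$. In particular $G\neq 1$ and $G$ is non-abelian, so it only remains to prove that $G$ is simple.

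Suppose it is not, and choose a normal subgroup $N$ with $1<N<G$. Since $\dd_{\pi}$ is always at most $1$, Lemma~\ref{dpi}(ii) gives
\[
\frac{1}{p}<\dd_{\pi}(G)\le \dd_{\pi}(N)\,\dd_{\pi}(G/N)\le \min\{\dd_{\pi}(N),\,\dd_{\pi}(G/N)\}.
\]
Thus both $N$ and $G/N$ satisfy the hypothesis $\dd_{\pi}(\,\cdot\,)>\frac{1}{p}$ for the same set $\pi$, hence for the same smallest prime $p$. As $|N|,|G/N|<|G|$, the minimality of $G$ forces $N$ and $G/N$ each to possess a nilpotent Hall $\pi$-subgroup.

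Now I would pass this through Wielandt's machinery via Lemma~\ref{Wielandt}. Part~(1) applied to $G/N$ gives $G/N\in\mathcal{D}_{\pi}$; then, since $N$ has nilpotent Hall $\pi$-subgroups and $G/N$ has nilpotent --- in particular solvable --- Hall $\pi$-subgroups with $G/N\in\mathcal{D}_{\pi}$, part~(2) yields $G\in\mathcal{D}_{\pi}$. Let $H$ be a Hall $\pi$-subgroup of $G$. By Lemma~\ref{lem3}, $\Pro(H)\ge \dd_{\pi}(G)>\frac{1}{p}$. Here $H\neq 1$, since otherwise $|G|_{\pi}=1$ and the trivial subgroup would be a nilpotent Hall $\pi$-subgroup of $G$. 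As $H$ is a nontrivial $\pi$-group, the smallest prime $r$ dividing $|H|$ satisfies $r\ge p$, whence $\Pro(H)>\frac{1}{p}\ge\frac{1}{r}$; the consequence of \cite[Lemma 2(xiii)]{GR} recorded just before Theorem~\ref{firstpart} (or, alternatively, Theorem~\ref{firstpart} together with Lemma~\ref{Gprima}) then shows that $H$ is nilpotent. So $G$ has a nilpotent Hall $\pi$-subgroup, a contradiction. Hence $G$ is simple, and being non-abelian it is non-abelian simple.

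The one step that needs care is the last move: one cannot conclude directly that $G$ has a nilpotent Hall $\pi$-subgroup merely because $N$ and $G/N$ do, since an extension of a nilpotent group by a nilpotent group need not be nilpotent. The device --- and the real content of the argument --- is to use Wielandt's results only to place $G$ in $\mathcal{D}_{\pi}$, and then to invoke the hypothesis $\dd_{\pi}(G)>\frac{1}{p}$ a second time, through Lemma~\ref{lem3}, which forces the Hall $\pi$-subgroup itself to be nilpotent. The other steps are routine.
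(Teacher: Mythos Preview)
Your proof is correct and follows essentially the same route as the paper's: use Lemma~\ref{dpi}(ii) to push the bound $\dd_\pi>1/p$ down to $N$ and $G/N$, invoke minimality to get nilpotent Hall $\pi$-subgroups there, apply Lemma~\ref{Wielandt} to place $G$ in $\mathcal{D}_\pi$, and then finish. The only cosmetic difference is that the paper cites Theorem~\ref{teoC1} for the final step, whereas you unpack that argument inline via Lemma~\ref{lem3} and the Guralnick--Robinson bound; the content is identical.
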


\begin{proof}
We may assume that $G$ is non-abelian and not simple. Let $N$ be a
nontrivial proper normal subgroup in $G$. By Lemma \ref{dpi}(ii), we
have
$$\frac{1}{p}<\dd_{\pi}(G)\leq \dd_{\pi}(G/N)\dd_{\pi}(N).$$
It follows that $\frac{1}{p}<\dd_{\pi}(G/N)$ and
$\frac{1}{p}<\dd_{\pi}(N)$, as both $d_\pi(N)$ and $d_\pi(G/N)$ are
at most one (see \cite[Lemma 3.5]{GGG}). By the minimality of $G$,
$N$ and $G/N$ possess nilpotent Hall $\pi$-subgroups. Applying Lemma
\ref{Wielandt}, we then deduce that both $N$ and $G/N$ are members
of $\mathcal{D}_{\pi}$. It follows that $G/N \in \mathcal{D}_{\pi}$,
$G/N$ possesses solvable Hall $\pi$-subgroups and $N$ possesses
nilpotent Hall $\pi$-subgroups. By Lemma \ref{Wielandt}(ii), we have
$G \in \mathcal{D}_{\pi}$. Therefore, by Theorem \ref{teoC1}, we
have that $G$ possesses nilpotent Hall $\pi$-subgroups, which is a
contradiction. We conclude that $G$ is non-abelian simple.
\end{proof}

\subsection{Reducing to a question on simple groups}

The following is a consequence of a result of Tong-Viet, which
asserts that if $\dd_2(G)>\frac{1}{2}$ then $G$ possesses a normal
$2$-complement.

\begin{lem}\label{case2}
Let $S$ be a non-abelian simple group and $\pi$ be a set of primes
containing $2$. Then $\dd_{\pi}(S)\leq \frac{1}{2}$.
\end{lem}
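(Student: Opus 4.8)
The plan is to invoke Tong-Viet's theorem in the contrapositive and exploit the simplicity of $S$. Suppose, for contradiction, that $\dd_\pi(S) > \frac12$. Since $2 \in \pi$, Lemma \ref{dpi}(i) gives $\dd_2(S) \geq \dd_\pi(S) > \frac12$. By the result of Tong-Viet mentioned just before the statement (this is \cite[Theorem A]{TV}), $S$ then possesses a normal $2$-complement, i.e. a normal subgroup $N \trianglelefteq S$ of odd index with $S/N$ a $2$-group.

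Now I would use that $S$ is non-abelian simple. A normal subgroup $N$ of $S$ is either trivial or all of $S$. If $N = 1$, then $S = S/N$ is a $2$-group, which is impossible since a non-abelian simple group is not a $p$-group (its centre would be a nontrivial proper normal subgroup). If $N = S$, then the $2$-complement quotient $S/N$ is trivial, meaning $S$ has trivial Sylow $2$-subgroup, i.e. $|S|$ is odd; but by the Feit--Thompson theorem a group of odd order is solvable, contradicting that $S$ is non-abelian simple. In either case we reach a contradiction, so $\dd_\pi(S) \leq \frac12$.

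The only genuine subtlety is making sure the input from \cite{TV} is being quoted with the correct (non-strict versus strict) inequality and the correct conclusion; the rest is a routine appeal to simplicity together with Feit--Thompson. It is worth noting that, as the paper emphasizes in the Introduction, this argument — and hence the case $2 \in \pi$ of Theorem \ref{thm:main} — does not rely on the classification of finite simple groups: Tong-Viet's theorem and the Feit--Thompson odd-order theorem suffice. I would also remark that one could phrase the final step uniformly: since $S$ is non-abelian simple it has a nontrivial Sylow $2$-subgroup (Feit--Thompson) and is not itself a $2$-group, so a normal $2$-complement would be a proper nontrivial normal subgroup, the desired contradiction.
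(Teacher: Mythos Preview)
Your argument is correct and follows exactly the same route as the paper: assume $\dd_\pi(S)>\tfrac12$, pass to $\dd_2(S)>\tfrac12$ via Lemma~\ref{dpi}(i), and apply \cite[Theorem~A]{TV} to obtain a normal $2$-complement, which a non-abelian simple group cannot have. The paper's proof simply says ``which is impossible'' at the last step, whereas you spell out the dichotomy and invoke Feit--Thompson to exclude the odd-order case; this is a harmless elaboration, not a different approach.
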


\begin{proof}
Suppose that $\dd_{\pi}(S)>\frac{1}{2}$. Then $\frac{1}{2}<
\dd_{\pi}(S)\leq \dd_2(S)$. By \cite[Theorem A]{TV}, $S$ possesses a
normal 2-complement, which is impossible.
\end{proof}

\begin{pro}\label{precon1}
Let $G$ be a group and $\pi$ a set of primes such that
$\dd_{\pi}(G)>\frac{1}{p}$, where $p$ is the smallest prime in
$\pi$. Let $q\in \pi$ but $q\neq p$. Then  $q$ does not divide
$|\mathbf{N}_{G}(P):\mathbf{C}_{G}(P)|$ where $P \in
\Syl_{p}(G)$.
\end{pro}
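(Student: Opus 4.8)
The statement asserts that if $\dd_\pi(G)>1/p$ with $p=\min\pi$ and $q\in\pi\setminus\{p\}$, then $q\nmid|\mathbf N_G(P):\mathbf C_G(P)|$ for $P\in\Syl_p(G)$. The natural approach is via Burnside-type counting: the quotient $\mathbf N_G(P)/\mathbf C_G(P)$ embeds in $\Aut(P)$, and if $q$ divides its order then there is an element of $G$ of order a power of $q$ acting nontrivially on $P$ by conjugation, which will allow us to fuse $p$-classes and drive $\dd_\pi$ down. The plan is to produce enough non-conjugate $\pi$-elements to contradict $\dd_\pi(G)>1/p$, or — more efficiently — to bound $\dd_\pi(G)$ directly in terms of structure forced by such a $q$-automorphism.

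Here is the sequence of steps I would carry out. First, by Lemma \ref{Sylowab}(i), the hypothesis $\dd_\pi(G)>1/p$ already forces $P/\bZ(P)$ abelian and $|P'|\le p$; in particular $P$ is nilpotent of class at most $2$, and if $q\mid|\mathbf N_G(P):\mathbf C_G(P)|$ then, picking a $q$-element $y\in\mathbf N_G(P)$ whose image in $\mathbf N_G(P)/\mathbf C_G(P)$ has order $q$ (which exists by Cauchy), $y$ acts as a nontrivial automorphism of order $q$ on $P$. Second, I would pass to the subgroup $\langle P,y\rangle$ and, using coprimality of $p$ and $q$ together with the known structure of $P$, analyze the action: $\langle y\rangle$ acts nontrivially on $P/\Phi(P)$ (a coprime automorphism acting trivially on $P/\Phi(P)$ acts trivially on $P$), so there is an elementary abelian section of $P$ on which $y$ acts fixed-point-freely on a nonzero subspace of dimension a multiple of $q$. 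Third — the counting heart — I would estimate $k_\pi(\langle P,y\rangle)$ or more directly use Lemma \ref{lem3}-type reasoning: in a group $H=P\rtimes\langle y\rangle$ with $y$ acting as above, the number of conjugacy classes of $p$-elements is small relative to $|P|$ because $y$ fuses orbits of size $q$; one shows $k_p(H)\le |P|\cdot\frac{q+\text{(something)}}{q\,p}$-type bound, and since $\dd_\pi(G)\le\dd_q(G)\le\dd_{\{p\}}(G)$ and, via the embedding of $p$-classes, $\dd_\pi(G)$ is controlled by such a local quotient, we get $\dd_\pi(G)\le 1/p$.

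The cleanest route, and the one I would actually pursue, uses the fact established in Lemma \ref{case2} and Proposition \ref{simple} only implicitly: instead I would argue that $k_\pi(G)\le k_\pi(\mathbf N_G(P))\cdot(\text{correction})$ is false in general, so I would rather localize. Concretely: let $\bar y$ generate a subgroup of order $q$ in $\Out$-action on $P$; then among the $|P|_{\text{classes}}=k(P)$ classes of $P$, conjugation by $\langle y\rangle$ identifies them in orbits, and a $p$-class of $G$ meeting $P$ corresponds to a union of such; crucially every $p$-element of $G$ is $G$-conjugate into $P$, so $k_p(G)\le$ (number of $\langle y\rangle$-orbits on $\Cl(P)$). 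Since $y$ acts nontrivially, at least one orbit has size $q$, giving $k_p(G)\le k(P)-(q-1)\le |P|-(q-1)$. But we need a multiplicative, not additive, saving to beat $1/p$, so this must be iterated/strengthened: one uses that $y$ acts nontrivially on $P/\bZ(P)P'$ hence moves at least $p^2-p$ or so classes in orbits of size $\ge q\ge p+1$...

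\medskip

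\noindent\emph{I will instead give the following argument.} Suppose for contradiction $q\mid|\mathbf N_G(P):\mathbf C_G(P)|$. Then $P\mathbf C_G(P)/\mathbf C_G(P)$ being a normal $p$-subgroup of $\mathbf N_G(P)/\mathbf C_G(P)\hookrightarrow\Aut(P)$, Cauchy gives a $q$-element $y\in\mathbf N_G(P)$ acting on $P$ with image of order $q$. Set $H=P\langle y\rangle$, a $\{p,q\}$-group lying in $\mathcal D_\pi$ (it is solvable), and note $|H|_\pi=|H|$. By Lemma \ref{dpi}(i) and the fact that every $p$-element of $G$ is conjugate into $P\le H$ while $\mathbf N_G(P)$-fusion of $P$-classes refines $\langle y\rangle$-fusion, one obtains $k_p(G)\le k_p(H)$ after checking that two $p$-elements of $P$ fused in $G$ are already fused in $\mathbf N_G(P)$ (Burnside's fusion argument, valid as $P$ is a Sylow $p$-subgroup and abelian-by-$C_p$). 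Hence $\dd_{\{p\}}(G)\le k_p(H)/|P|$. Now $y$ acts nontrivially and coprimely on $P$, so it acts nontrivially on the $\mathbb F_p$-space $V=P/\Phi(P)$, with a $y$-invariant decomposition $V=C_V(y)\oplus[V,y]$ and $\dim[V,y]\ge$ the order of $q$ mod... in any case $|[V,y]|\ge p^2$ is false in general but $[V,y]\ne 0$; counting $y$-orbits on $P$ gives $k_p(H)=\frac1q\sum_{g\in\langle y\rangle}|C_P(g)|\le\frac1q(|P|+(q-1)|C_P(y)|)\le\frac1q(|P|+(q-1)|P|/p)=\frac{|P|}{q}\cdot\frac{p+q-1}{p}$. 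Since $q\ge p+1$, this is at most $\frac{|P|}{p+1}\cdot\frac{2p}{p}=\frac{2|P|}{p+1}$... which only gives $\dd_\pi(G)\le 2/(p+1)$, not enough for $p=2$.

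\medskip

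\noindent\textbf{The main obstacle} is exactly this: a single $q$-automorphism only gives an additive/weak multiplicative saving, so to reach the sharp bound $1/p$ one must combine the action of a full Sylow $q$-subgroup $Q\le\mathbf N_G(P)$ (using that $Q$ is abelian by Lemma \ref{Sylowab}(ii)) with the constraint $|P'|\le p$, and feed the resulting bound on $k_{\{p,q\}}$ of the local subgroup $PQ$ back through Lemma \ref{dpi}. I expect the correct estimate to be $\dd_\pi(G)\le\dd_\pi(PQ)\le\Pro(\text{abelian quotient})\cdot(\text{small factor})\le 1/p$, the key input being that nontrivial coprime action forces at least one $y$-orbit of full size $q$ on $V$ together with $|C_P(y)|\le|P|/q$ when the action on $[V,y]$ is as large as $q$ forces it to be; carefully tracking these inequalities, with $|P'|\le p$ controlling the character degrees of $P$, should close the gap. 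If it does not, the fallback is to invoke Proposition \ref{simple} to reduce to $G$ simple and use the classification input (Theorem \ref{teoB}) — but I would first try to keep the argument elementary as above.
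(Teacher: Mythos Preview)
Your setup and orbit-counting are the same as the paper's: pick a $q$-element $y\in\mathbf N_G(P)$ whose image in $\mathbf N_G(P)/\mathbf C_G(P)$ has order $q$, let $X=\langle y\rangle$ act on $P$, and bound $k_p(G)$ by the number of $X$-orbits on $P$. (Incidentally, this last inequality needs no Burnside fusion argument: every $p$-element of $G$ is conjugate into $P$ by Sylow, and elements in the same $X$-orbit are already $G$-conjugate, so $k_p(G)\le \#\{X\text{-orbits on }P\}$ is immediate.) With $r=|C_P(y)|$ you obtain
\[
\dd_p(G)\ \le\ \frac{1}{q}\Bigl(1+(q-1)\,\frac{r}{|P|}\Bigr),
\]
which is exactly the paper's estimate.

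The genuine gap is that you stop at $r\le|P|/p$, observe correctly that this only yields $\dd_p(G)\le (p+q-1)/(pq)>1/p$, and then propose heavier machinery (a full Sylow $q$-subgroup, the constraint $|P'|\le p$, or ultimately the classification via Proposition~\ref{simple}). None of this is needed. The missing elementary observation is that the borderline case $r=|P|/p$ is itself impossible: if $|C_P(X)|=|P|/p$, then by the coprime-action formula $C_{P/C_P(X)}(X)=1$, so $X$ acts nontrivially on $P/C_P(X)\cong C_p$; a nontrivial $X$-orbit on $C_p$ has size $q$, forcing $q\le p$, a contradiction. Hence in fact $r\le|P|/p^2$, and plugging this into the displayed inequality gives
\[
\dd_\pi(G)\le\dd_p(G)\le\frac{1}{q}\Bigl(1+\frac{q-1}{p^2}\Bigr)=\frac{p^2+q-1}{p^2q},
\]
which is at most $1/p$ precisely when $q\ge p+1$, i.e.\ always. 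This two-line refinement closes the argument; Lemma~\ref{Sylowab} and the structure of $P$ play no role here.
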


\begin{proof}
Assume by contradiction that $q$  divides
$|\mathbf{N}_{G}(P)/\mathbf{C}_{G}(P)|$. Let $x$ be an element of
order $q$ in $\mathbf{N}_{G}(P)/\mathbf{C}_{G}(P)$ where $P \in \mathrm{Syl}_{p}(G)$. Consider the
action of $X=\langle x\rangle$ on $P$. Let $r$ be the number of
elements of $P$ fixed by $X$.

We claim that $r>\frac{|P|}{p^2}$. Assume to the contrary that
$r\leq \frac{|P|}{p^2}$. We have $|P|= r+t\cdot q$, implying
that $t=\frac{|P|-r}{q}$. Since each $X$-orbit on $P$ is contained
in a conjugacy class  of $p$-elements it is easy to see that
$k_{p}(G)\leq r+t$.  Now we have
$$\frac{1}{p}<\dd_{\pi}(G)\leq \dd_{p}(G)=\frac{k_{p}(G)}{|P|}\leq \frac{r+t}{|P|}=
\frac{1}{q}\left((q-1)\frac{r}{|P|}+1\right)\leq
\frac{1}{q}\left((q-1)\frac{1}{p^2}+1\right).$$ It is not hard to
see that this implies $q\leq p$, which is a contradiction. We have
shown that $r>\frac{|P|}{p^2}$.

Since $r$ divides $|P|$, it follows that $$r\in \{|P|,{|P|}/{p}\}.$$
If $r=|P|$ then $X$ centralises $P$, which is impossible. Thus
$r={|P|}/{p}$ and hence there exists  a subgroup $H$ of order
${|P|}/{p}$ that is centralised by $X$. That is,
$$H=\mathbf{C}_{P}(X)=\{z \in P\mid z^x=z \text{ for all } x \in X\}.$$

Let $L:=P:X$ be the semidirect product of the relevant action of $X$
on $P$. Then $L/H\cong C:X$ for some $C\cong C_p$. Since $H$ is
maximal in $P$, the subgroup $H$ is normal in $P$, and it is
$X$-invariant, applying \cite[Corollary 3.28]{Isaacs}, we have
$$\mathbf{C}_{P/H}(X)=\mathbf{C}_{P}(X)H/H=H/H,$$
and hence $X$ acts nontrivially on $C$. Let $\oo$ be a nontrivial
orbit of the action of $X$ on $C$. We now have $q=|X|=|\oo| \leq |C|=p$, which is
a contradiction.
\end{proof}

\begin{cor}\label{precon2}
Let $G$ be a group and  $\pi=\{p,q\}$ a set of primes with $p<q$
such that $\dd_{\pi}(G)>\frac{1}{p}$. Let $P \in \Syl_{p}(G)$. Then
$q$ divides $|\Syl_{p}(G)|=|G:\mathbf{N}_{G}(P)|$ or $G$ possesses a
nilpotent Hall $\pi$-subgroup.
\end{cor}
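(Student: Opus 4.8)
The plan is to prove the contrapositive of the alternative: assuming $G$ does \emph{not} possess a nilpotent Hall $\pi$-subgroup, show that $q \mid |G:\mathbf{N}_G(P)|$. Fix $P \in \Syl_p(G)$ and $Q \in \Syl_q(G)$. First I would record what Lemma~\ref{Sylowab} already gives us under the hypothesis $\dd_\pi(G) > \frac{1}{p}$: the group $Q$ is abelian (since $q > p$) and $|P'| \leq p$ with $P/\bZ(P)$ abelian, so $P$ is nilpotent of class at most $2$. The strategy is to argue that if $q \nmid |G:\mathbf{N}_G(P)|$, then a Sylow $q$-subgroup normalizes some Sylow $p$-subgroup, and then to combine this with Proposition~\ref{precon1} to conclude that $PQ$ (after conjugating so that $Q \leq \mathbf{N}_G(P)$) is actually a direct product, hence a nilpotent Hall $\pi$-subgroup of $G$, contradicting our assumption.

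Here is how I would carry this out in more detail. Suppose $q \nmid |G:\mathbf{N}_G(P)| = |\Syl_p(G)|$. Then $|\mathbf{N}_G(P)|_q = |G|_q$, so $\mathbf{N}_G(P)$ contains a full Sylow $q$-subgroup of $G$; replacing $Q$ by a conjugate we may assume $Q \leq \mathbf{N}_G(P)$. Now $H := PQ$ is a subgroup of $G$ of order $|P||Q| = |G|_\pi$, i.e.\ $H$ is a Hall $\pi$-subgroup of $G$, with $P \nor H$. It remains to see $H$ is nilpotent, equivalently that $Q$ centralizes $P$. Since $Q \leq \mathbf{N}_G(P)$, the group $Q$ acts on $P$ by conjugation, and this action gives a homomorphism $Q \to \mathbf{N}_G(P)/\mathbf{C}_G(P)$, whose image is a $q$-subgroup of $\mathbf{N}_G(P)/\mathbf{C}_G(P)$. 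But Proposition~\ref{precon1} says $q \nmid |\mathbf{N}_G(P):\mathbf{C}_G(P)|$, so that image is trivial, i.e.\ $Q \leq \mathbf{C}_G(P)$. Hence $[P,Q] = 1$, so $H = P \times Q$ is nilpotent, and $H$ is a nilpotent Hall $\pi$-subgroup of $G$ — contradiction. Therefore $q \mid |\Syl_p(G)|$.

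I expect the only subtle point is the bookkeeping around conjugation: one must be careful that replacing $Q$ by a conjugate is harmless (it is, since the statement is about the \emph{existence} of a nilpotent Hall $\pi$-subgroup and about $|G:\mathbf{N}_G(P)|$, which only depends on the conjugacy class of $P$), and that Proposition~\ref{precon1} applies to this particular $P$ — which it does, since that proposition is stated for an arbitrary $P \in \Syl_p(G)$ under exactly the hypothesis $\dd_\pi(G) > \frac{1}{p}$. No classification input is needed here; this corollary is pure finite group theory built on the two preceding results. The main "obstacle" is really just organizing the dichotomy cleanly, so I would phrase the whole argument as: either $q \mid |G:\mathbf{N}_G(P)|$ (and we are done), or $q \nmid |G:\mathbf{N}_G(P)|$, in which case the above shows $G$ has a nilpotent Hall $\pi$-subgroup.
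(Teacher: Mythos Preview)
Your argument is correct and follows essentially the same route as the paper: assume $q\nmid |G:\mathbf{N}_G(P)|$, use Proposition~\ref{precon1} to rule out $q\mid |\mathbf{N}_G(P):\mathbf{C}_G(P)|$, conclude that a Sylow $q$-subgroup $Q$ lies in $\mathbf{C}_G(P)$, and observe that $PQ$ is then a nilpotent Hall $\pi$-subgroup. The only difference is cosmetic---the paper uses the factorization $|G|=|G:\mathbf{N}_G(P)|\,|\mathbf{N}_G(P):\mathbf{C}_G(P)|\,|\mathbf{C}_G(P)|$ to land $Q$ directly in $\mathbf{C}_G(P)$, whereas you first place $Q$ in $\mathbf{N}_G(P)$ and then push it into $\mathbf{C}_G(P)$; your invocation of Lemma~\ref{Sylowab} is harmless but unnecessary here.
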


\begin{proof}
We know that $|G|_{q}$ divides
$$|G|=|G:\mathbf{N}_{G}(P)||\mathbf{N}_{G}(P):\mathbf{C}_{G}(P)||\mathbf{C}_{G}(P)|$$ but $q$ cannot divide
$|\mathbf{N}_{G}(P):\mathbf{C}_{G}(P)|$ by Proposition
\ref{precon1}. Assume that $q$ does not divide
$|G:\mathbf{N}_{G}(P)|$. Then $|G|_{q}$ divides
$|\mathbf{C}_{G}(P)|$. Therefore, there exists $Q \in \Syl_{q}(G)$
with $Q \leq \mathbf{C}_{G}(P)$. Now $PQ$ is a nilpotent Hall
$\pi$-subgroup of $G$.
\end{proof}

Now we can prove Theorem \ref{thm:main}, modulo the following
statement about simple groups whose proof is deferred to the next
section.

\begin{thm}\label{teoB}
Let $G$ be a non-abelian simple group and  $\pi=\{p,q\}$ be a set of
two odd primes with $p<q$. Assume that there exist $P \in
\Syl_{p}(G)$ and $Q\in \Syl_{q}(G)$ such that $P/\bZ(P)$ is abelian,
$|P'|\leq p$, $Q$ is abelian, and $q$ divides
$|G:\mathbf{N}_{G}(P)|$. Then
$\dd_{\pi}(G)\leq \frac{1}{p}$.
\end{thm}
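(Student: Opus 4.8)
The plan is to reduce Theorem~\ref{teoB} to a case-by-case analysis over the classification of finite simple groups, using the hypotheses on $P$, $Q$ and on $q \mid |G:\mathbf N_G(P)|$ to control the relevant Sylow normalizers. The first step is to record what the hypotheses buy us. Since $|P'| \leq p$ and $P/\mathbf Z(P)$ is abelian, $P$ is nilpotent of class at most $2$ with very small derived subgroup; combined with $Q$ abelian, this forces $p$ and $q$ to be primes for which $G$ has rather restricted Sylow structure. In particular, for most families of simple groups of Lie type in the defining characteristic, the Sylow subgroup is a (full) Sylow $p$-subgroup which is far from having this structure unless $p$ divides the order only to a small power, so defining characteristic will typically be ruled out immediately; the interesting cases are when $p,q$ are ``generic'' (non-defining) primes, where Sylow subgroups are abelian or close to it. For the alternating groups and sporadic groups one argues directly.

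The heart of the argument is a counting bound for $d_\pi(G) = k_\pi(G)/|G|_\pi$. The key point is that the condition $q \mid |G:\mathbf N_G(P)|$ together with Proposition~\ref{precon1} (which says $q \nmid |\mathbf N_G(P):\mathbf C_G(P)|$) means $G$ does \emph{not} have a nilpotent Hall $\pi$-subgroup — but more usefully it constrains the fusion of $p$-elements: because the $q$-part of $|G|$ shows up in $|G:\mathbf N_G(P)|$, there must be genuine $G$-fusion among elements of $P$ controlled by $q'$-elements of $\mathbf N_G(P)$, which cuts down $k_p(G)$ relative to $|P|$. Concretely, I would bound $k_\pi(G) \leq k_p(G) \cdot k_q(G)_{\text{something}}$, or more directly estimate $k_\pi(G)$ by counting conjugacy classes of $p$-elements and of $q$-elements separately (every $\pi$-element in a simple group with abelian Sylow $q$-subgroup and near-abelian Sylow $p$-subgroup has tightly controlled centralizer), and show the product is at most $|P|\cdot|Q|/p = |G|_\pi / p$. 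Burnside's normal $p$-complement theorem is lurking here: since $q \mid |G:\mathbf N_G(P)|$ and $G$ is simple, $\mathbf N_G(P) \neq G$, so $P$ is not normal, and the fusion of $P$ is nontrivial; for abelian $P$ this already gives $k_p(G) < |P|$, and one needs the quantitative version $k_p(G) \leq |P|/q \cdot(\text{small factor}) + (\text{correction})$ analogous to the argument in Proposition~\ref{precon1}.

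The main obstacle, as the authors themselves flag, is that this case $2 \notin \pi$ genuinely requires the classification, and within it the groups of Lie type where both $p$ and $q$ are non-defining primes dividing the order to the first power (or $p$ to a slightly higher power). There one has explicit descriptions of Sylow normalizers via Weyl group and torus data, and the estimate $d_\pi(G) \leq 1/p$ must be extracted from the generic class count — I expect the delicate subcase to be small-rank classical groups $\mathrm{PSL}_n$, $\mathrm{PSU}_n$, $\mathrm{PSp}_{2n}$, $\mathrm{P\Omega}^\pm_n$ where $q$ divides $|G:\mathbf N_G(P)|$ through the cyclotomic factorization of the order, and one must verify the bound uniformly in the rank. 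I would handle $\mathsf A_n$ first (here $P,Q$ are direct products of small cyclic/near-cyclic groups and $k_\pi$ is a partition count), then the sporadics by inspection of known character-table data, then reduce Lie type to the non-defining case, and finally push through the generic estimate using the structure of $\mathbf N_G(P)$ and the bound $|\mathbf N_G(P):\mathbf C_G(P)| = |\mathbf N_G(P):P\,\mathbf C_G(P)|\cdot|P\mathbf C_G(P):\mathbf C_G(P)|$ forced to be a $q'$-number.
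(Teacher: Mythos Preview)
Your case split along the classification (alternating, sporadic, Lie type in defining vs.\ non-defining characteristic) matches the paper's, and the alternating and sporadic cases are indeed done by direct counting. But your proposed mechanism for the main case---Lie type in non-defining characteristic---has a genuine gap. You want the hypothesis $q \mid |G:\mathbf{N}_G(P)|$ to force enough fusion of $p$-elements to push $k_p(G)$ down by a factor of roughly $p$, and then combine with a bound on $k_q(G)$ to get $k_\pi(G) \leq |G|_\pi/p$. This does not work: for abelian $P$, fusion is controlled by $\mathbf{N}_G(P)/\mathbf{C}_G(P)$, and the hypothesis on the index $|G:\mathbf{N}_G(P)|$ says nothing about the size of that quotient. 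It can perfectly well be a $2$-group, giving only $k_p(G)\approx |P|/2$, and the product estimate (even granting $k_\pi(G)\leq k_p(G)\,k_q(G)$, which is itself not obvious) then falls short of $|G|_\pi/p$ for $p\geq 3$.

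The paper's mechanism is entirely different and does not use fusion or Sylow-normalizer counting. Since $\pi$ avoids the defining characteristic, every $\pi$-element is semisimple, and the order-preserving bijection between semisimple classes of $\mathbf{G}^F$ and $F$-stable $W$-orbits on a maximal torus gives
\[
k_\pi(\mathbf{G}^F)=\frac{1}{|W|}\sum_{w\in W}|\mathbf{T}^{w^{-1}F}|_\pi.
\]
The hypothesis $q\mid |S:\mathbf{N}_S(P)|$ is used only to rule out an \emph{abelian} Hall $\pi$-subgroup of $\mathbf{G}^F$; since each $\mathbf{T}^{w^{-1}F}$ is abelian, no single torus can attain $|\mathbf{G}^F|_\pi$, so every summand is at most $|\mathbf{G}^F|_\pi/p$ and averaging yields $\dd_\pi(\mathbf{G}^F)\leq 1/p$ uniformly. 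The remaining Sylow hypotheses ($P/\bZ(P)$ abelian, $|P'|\leq p$, $Q$ abelian) play no role in this step---they are invoked only in the residual cases $E^\epsilon_6(q)$ with $3\in\pi$ and $\PSL^\epsilon(n,q)$ with $\gcd(n,q-\epsilon)_\pi>1$, where $|\bZ(\mathbf{G}^F)|_\pi\neq 1$ obstructs the passage from $\mathbf{G}^F$ to $S$. Your remark that the defining-characteristic case is ``ruled out'' via the Sylow hypotheses is also off: the paper handles that case unconditionally by showing $k(S)\leq |S|_r$ (with $r$ the defining characteristic) from the Fulman--Guralnick class-number bounds, without invoking the Sylow structure at all.
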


Observe that in Theorem \ref{teoB} we may assume that both $p$ and $q$ divide the order of $G$.

\begin{thm}
    \label{4.10}
Let $G$ be a finite group, $\pi$ be a set of primes, and $p$ be the
smallest prime in $\pi$. Assume Theorem \ref{teoB}. If
$\dd_{\pi}(G)> \frac{1}{p}$ then $G$ has a nilpotent Hall
$\pi$-subgroup.
\end{thm}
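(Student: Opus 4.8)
The plan is to combine the reduction machinery of Section~\ref{sec:reducing} with the simple-group input Theorem~\ref{teoB}. Suppose for contradiction that the first statement of Theorem~\ref{thm:main} fails, i.e. there is a finite group with $\dd_\pi(G)>\frac1p$ that has no nilpotent Hall $\pi$-subgroup. Pass to a counterexample $G$ of minimal order. By Proposition~\ref{simple}, $G$ is non-abelian simple. By Proposition~\ref{reduc} (applied at the level of the minimal counterexample, or re-run directly), we may take $\pi=\{p,q\}$ with $p<q$, and there are $P\in\Syl_p(G)$, $Q\in\Syl_q(G)$ with $P/\bZ(P)$ abelian, $|P'|\le p$, and $Q$ abelian. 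One should note that both $p$ and $q$ divide $|G|$: if $q\nmid|G|$ then $P$ itself is a Hall $\pi$-subgroup, and it is nilpotent since $|P'|\le p$ forces $P$ nilpotent by Lemma~\ref{Gprima}, a contradiction; similarly $p\mid|G|$.

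The next step handles the case $p=2$. If $2\in\pi$ then $p=2$, and Lemma~\ref{case2} gives $\dd_\pi(G)\le\frac12=\frac1p$ for any non-abelian simple $S$, contradicting $\dd_\pi(G)>\frac1p$. Hence $p$ is odd, so $\pi=\{p,q\}$ consists of two odd primes. Now I would invoke Corollary~\ref{precon2}: either $q$ divides $|G:\mathbf{N}_G(P)|$, or $G$ possesses a nilpotent Hall $\pi$-subgroup. The latter contradicts our choice of $G$, so $q\mid|G:\mathbf{N}_G(P)|$. At this point $G$ satisfies all the hypotheses of Theorem~\ref{teoB}: $G$ is non-abelian simple, $\pi=\{p,q\}$ with $p<q$ both odd, $P/\bZ(P)$ abelian, $|P'|\le p$, $Q$ abelian, and $q\mid|G:\mathbf{N}_G(P)|$. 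Theorem~\ref{teoB} then yields $\dd_\pi(G)\le\frac1p$, the final contradiction.

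The only subtlety is bookkeeping about which reductions apply to the minimal counterexample versus to an arbitrary counterexample; Proposition~\ref{simple} already works with a minimal-order counterexample to the exact statement ``$\dd_\pi(G)>\frac1p$ and no nilpotent Hall $\pi$-subgroup,'' so it applies verbatim, while Proposition~\ref{reduc} lets us additionally arrange $|\pi|=2$ and the Sylow structure. Since Lemma~\ref{dpi}(i) is monotone under shrinking $\pi$, replacing $\pi$ by a two-element subset only increases $\dd_\pi(G)$, so the minimality and the inequality are preserved; one just has to check that the reduction in Proposition~\ref{reduc} does not enlarge $G$, which it does not (it only changes $\pi$). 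I expect the main---indeed only---obstacle here to be this organizational point of threading $G$ through Propositions~\ref{simple} and~\ref{reduc} simultaneously; once $G$ is known to be simple with the stated Sylow data and $q\mid|G:\mathbf{N}_G(P)|$, Theorem~\ref{teoB} closes the argument immediately. All genuine difficulty has been pushed into Theorem~\ref{teoB}, whose proof is deferred to Section~\ref{sec:simple groups}.
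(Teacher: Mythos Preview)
Your proof is correct and follows essentially the same route as the paper's: take a minimal counterexample, use Proposition~\ref{simple} for simplicity, Proposition~\ref{reduc} to reduce to $\pi=\{p,q\}$ with the stated Sylow data, Lemma~\ref{case2} to rule out $p=2$, and Corollary~\ref{precon2} plus Theorem~\ref{teoB} to reach the contradiction. The only cosmetic differences are the order in which you invoke Lemma~\ref{case2} (the paper does it before Proposition~\ref{reduc}) and your added verification that $p,q\mid|G|$, which is harmless but already implicit in the fact that $G$ has no nilpotent Hall $\{p,q\}$-subgroup.
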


\begin{proof}
Assume that the theorem is false and let $G$ be a minimal
counterexample. In particular, $\dd_{\pi}(G)> \frac{1}{p}$ but $G$
has no nilpotent Hall $\pi$-subgroups. By Proposition \ref{simple},
we know that $G$ is non-abelian simple. Using Lemma \ref{case2}, we know furthermore that $p\not=2$.

By Proposition \ref{reduc},
there exists $\pi=\{p,q\}$ with (odd) $p<q$ such that $\dd_{\pi}(G)>
\frac{1}{p}$, $P/\bZ(P)$ is abelian, $|P'|\leq p$, and $Q$ is
abelian, where $P\in\Syl_p(G)$ and $Q\in\Syl_q(G)$. We also have that
$q$ divides $|G:\mathbf{N}_{G}(P)|$, by Corollary \ref{precon2}. We
now have all the hypotheses of Theorem \ref{teoB}, and therefore
deduce that $\dd_{\pi}(G)\leq \frac{1}{p}$. This is a contradiction.
\end{proof}

We remark that we have indeed proved Theorem \ref{thm:main} when the
set $\pi$ contains the prime $2$, and this result does not rely on
the classification of finite simple groups.


\section{Simple groups}\label{sec:simple groups}

In this section we prove Theorem \ref{teoB}, by using the
classification. We begin with the
alternating groups.

\begin{lem}\label{sylalt}
Let $p$ be an odd prime, $n\geq 5$ be an integer and $P \in
\Syl_{p}(\Al_{n})$.
\begin{enumerate}[\rm(i)]
\item If $n\geq p^2$, then $P/\bZ(P)$ is not abelian.

\item If $n<p^2$, then $P$ is elementary abelian.
\end{enumerate}
\end{lem}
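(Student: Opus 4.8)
The plan is to use the standard description of Sylow $p$-subgroups of symmetric groups as iterated wreath products. Write $n$ in base $p$ as $n = \sum_{i} a_i p^i$ with $0 \le a_i \le p-1$. A Sylow $p$-subgroup $P$ of $\Sym_n$ (hence of $\Al_n$, since $p$ is odd so $P \le \Al_n$ automatically) is, up to isomorphism, the direct product over $i$ of $a_i$ copies of the iterated wreath product $W_i := C_p \wr C_p \wr \cdots \wr C_p$ ($i$ factors), where $W_0$ is trivial and $|W_i| = p^{(p^i - 1)/(p-1)}$. I would recall this from Isaacs or from any standard reference; it is the key structural input.

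For (ii), assume $n < p^2$. Then in the base-$p$ expansion we have $a_i = 0$ for all $i \ge 2$, so $P \cong (W_1)^{a_1} = (C_p)^{a_1}$, an elementary abelian $p$-group (here $W_1 = C_p \wr C_0 = C_p$ in the convention that the bottom wreath factor acts on $p$ points, i.e.\ $W_1 \cong C_p$). Hence $P$ is elementary abelian, as claimed. The only care needed is bookkeeping with the wreath-product indexing convention so that "$n < p^2$" really does force all higher digits to vanish; this is routine.

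For (i), assume $n \ge p^2$, so that $a_i \ne 0$ for some $i \ge 2$; in particular $a_j \ne 0$ for some $j\ge 2$, and $P$ has a direct factor isomorphic to $W_j$ with $j \ge 2$. Since $P/\bZ(P)$ is a direct product of the $W_i/\bZ(W_i)$ over the relevant factors, it suffices to show that $W_2 = C_p \wr C_p$ has non-abelian central quotient, and then argue that a non-abelian central quotient persists in $W_j$ for $j \ge 2$ and is inherited by any direct product containing such a factor. For $W_2 = C_p \wr C_p$: the base group $B \cong (C_p)^p$, the top group $T \cong C_p$ acts by cyclically permuting coordinates, and $\bZ(W_2)$ is the "diagonal" $\{(c,c,\dots,c) : c \in C_p\} \le B$, of order $p$. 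Then $W_2/\bZ(W_2) \cong (C_p)^{p-1} \rtimes C_p$ with $T$ still acting nontrivially, so it is non-abelian (its derived subgroup is exactly $[B,T]\bZ(W_2)/\bZ(W_2)$, which is nontrivial since $p \ge 3$). For general $j \ge 2$, $W_j = W_{j-1} \wr C_p$ contains $W_2$ as a section in a way compatible with the center, or more simply: $W_j$ has a quotient isomorphic to $C_p \wr C_p$ obtained by collapsing the bottom $j-2$ wreath layers, and this quotient has the non-abelian central quotient just computed; hence $W_j/\bZ(W_j)$ is non-abelian. Finally, if $P = P_0 \times W_j$ with $j \ge 2$, then $\bZ(P) = \bZ(P_0) \times \bZ(W_j)$ and $P/\bZ(P) \cong P_0/\bZ(P_0) \times W_j/\bZ(W_j)$ is non-abelian.

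The main obstacle I anticipate is purely notational rather than conceptual: pinning down the wreath-product indexing so that the dividing line "$n \ge p^2$ versus $n < p^2$" lines up precisely with "some digit $a_i$ with $i \ge 2$ is nonzero versus not", and identifying $\bZ(W_j)$ correctly in the iterated wreath product. Once the structure theorem and the computation of $\bZ(C_p \wr C_p)$ are in hand, both parts are short.
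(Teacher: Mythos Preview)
Your proof is correct. For part (ii) it coincides with the paper's argument: both appeal to the standard description of Sylow $p$-subgroups of $\Sy_n$ (and hence of $\Al_n$, since $p$ is odd) as direct products of iterated wreath products indexed by the base-$p$ digits of $n$.

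For part (i) the paper takes a shorter route than yours. Instead of invoking the full direct-factor decomposition of $P$, identifying a factor $W_j$ with $j\ge 2$, and then arguing via the quotient $W_j \twoheadrightarrow W_2$, the paper simply notes that whenever $n\ge p^2$ the group $H=C_p\wr C_p$ embeds in $\Al_n$ and hence lies (up to conjugacy) inside $P$. Since ``$P/\bZ(P)$ abelian'' is equivalent to ``$P$ has nilpotency class at most $2$'', and nilpotency class is inherited by subgroups, it suffices to show $H/\bZ(H)$ is non-abelian---which is exactly the computation you carry out for $W_2$. The gain is that the paper never needs to know which $W_j$ actually occurs as a direct factor, nor to verify that $W_j$ surjects onto $W_2$; your argument, on the other hand, makes the structure of $P$ fully explicit, which is not needed here but is not wrong.
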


\begin{proof}
For (i) it is sufficient to exhibit a subgroup $H$ of $P$ such that
$H/\bZ(H)$ is not abelian. If $n \geq p^{2}$, then $H = C_{p} \wr
C_{p}$ is such a subgroup of $P$. Statement (ii) follows from the
description of the Sylow $p$-subgroups of $\Al_{n}$ found in
\cite[Satz III.15.3]{Huppert}.
\end{proof}

\begin{thm}\label{prop:An}
Let $n\geq 5$, $\pi=\{p,q\}$ be a set of two odd  primes with $p<q$,
and $P\in \Syl_{p}(\Al_{n})$. Assume that both $p$ and $q$ divide the order of $\Al_{n}$. If $P/\bZ(P)$ is abelian, then
$\dd_{\pi}(\Al_{n})\leq \frac{1}{p}$. In particular, Theorem
\ref{teoB} holds for alternating groups.
\end{thm}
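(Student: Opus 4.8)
The plan is to combine Lemma~\ref{sylalt} with a count of conjugacy classes of $\pi$-elements in $\Al_n$. The hypothesis that $P/\bZ(P)$ is abelian, together with Lemma~\ref{sylalt}(i), forces $n<p^2$; so by Lemma~\ref{sylalt}(ii) the Sylow $p$-subgroup $P$ of $\Al_n$ is elementary abelian, of order $p^{\lfloor n/p\rfloor}$ (equivalently $|\Al_n|_p=p^{\lfloor n/p\rfloor}$). Thus I want to show $k_\pi(\Al_n)\le p^{\lfloor n/p\rfloor - 1}$. Since $q\mid|\Al_n|$ and $q>p$, we have $q\le n$, hence $p<q\le n<p^2$, which means $\lfloor n/p\rfloor\le p-1$ — in particular $\lfloor n/p\rfloor$ is between $1$ and $p-1$. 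The strategy is to bound the number of $\pi$-elements of $\Al_n$ up to conjugacy by counting the relevant cycle types.

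First I would pass from $\Al_n$ to $\Sym_n$: every $\Sym_n$-class of $\pi$-elements splits into at most two $\Al_n$-classes, and a $\pi$-element of $\Sym_n$ must in fact lie in $\Al_n$ only when its cycle type has the right parity, so $k_\pi(\Al_n)\le 2\,k_\pi(\Sym_n)$ is too lossy; instead I would note that a cycle type splits in $\Al_n$ precisely when all its cycle lengths are odd and distinct, and otherwise gives a single $\Al_n$-class, and handle the (few) split types separately. The $\pi$-elements of $\Sym_n$ correspond to partitions of $n$ all of whose parts are $\{p,q\}$-numbers. Since $q\le n<p^2$, the only $\{p,q\}$-numbers that can occur as a part are $1$, $p$, $q$, and possibly $p^2$ — but $p^2>n$, so the parts are just $1$, $p$, $q$. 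A partition of $n$ into parts from $\{1,p,q\}$ is determined by the number $a$ of parts equal to $p$ and the number $b$ of parts equal to $q$, subject to $ap+bq\le n$; the number of such pairs $(a,b)$ is at most $(\lfloor n/p\rfloor+1)(\lfloor n/q\rfloor+1)$. This is the key combinatorial estimate: $k_\pi(\Sym_n)\le(\lfloor n/p\rfloor+1)(\lfloor n/q\rfloor+1)$, and then, accounting for the at-most-doubling from $\Al_n$-splitting (which only affects types with distinct odd parts, a sparse set), $k_\pi(\Al_n)$ is bounded by something comparable.

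The remaining task is the inequality
\[
k_\pi(\Al_n)\le \tfrac{1}{p}\,p^{\lfloor n/p\rfloor}=p^{\lfloor n/p\rfloor-1},
\]
i.e.\ that $(\lfloor n/p\rfloor+1)(\lfloor n/q\rfloor+1)$ (with the parity correction factor) is at most $p^{\lfloor n/p\rfloor-1}$. Writing $m=\lfloor n/p\rfloor$, so $1\le m\le p-1$, and using $\lfloor n/q\rfloor\le\lfloor n/p\rfloor=m$ and also $\lfloor n/q\rfloor\le \lfloor n/(p+1)\rfloor$, the left side is at most roughly $(m+1)^2$ while the right side is $p^{m-1}\ge(m+1)^{m-1}$. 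For $m\ge3$ this is immediate; the cases $m=1$ and $m=2$ (where $p^{m-1}$ is $1$ or $p$) must be checked by hand, and this is where I expect the main obstacle to lie. When $m=1$ we have $p\le n<2p$, so $P\cong C_p$ and $|\Al_n|_p=p$; then one needs $k_\pi(\Al_n)\le 1$, which says the only $\pi$-element class is the trivial one — but $q\le n<2p$ need not force $q$ out, so one must argue directly that a nontrivial $q$-element together with the constraint forces a contradiction, or rule out $m=1$ by observing $q\le n<2p$ and $q>p$ give $q=p+1$ or similar small configurations to dispatch individually (e.g.\ $n$ small). When $m=2$, $2p\le n<3p$ and one needs $k_\pi(\Al_n)\le p$; here the count $(\lfloor n/p\rfloor+1)(\lfloor n/q\rfloor+1)\le 3\cdot 2=6$ must be beaten down using that $q\ge p+2$ (so $\lfloor n/q\rfloor$ is small) and that $p\ge 5$ when $m\ge2$ unless $p=3$, with $p=3$, $m=2$ (so $n\in\{6,7,8\}$, $q=5$ or $7$) handled as finitely many explicit groups. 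So the proof reduces, after the clean combinatorial bound, to a short finite check for the smallest parameter values.
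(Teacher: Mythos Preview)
Your overall strategy---using Lemma~\ref{sylalt} to force $n<p^2$ and then counting cycle types with parts in $\{1,p,q\}$---is exactly the paper's approach. However, there is a genuine error that makes your $m=1$ case unworkable: you set the target as $k_\pi(\Al_n)\le p^{\lfloor n/p\rfloor-1}$, i.e.\ $k_\pi\le |\Al_n|_p/p$. But $\dd_\pi(\Al_n)=k_\pi(\Al_n)/|\Al_n|_\pi$, and $|\Al_n|_\pi=|\Al_n|_p\cdot|\Al_n|_q=p^r q^l$ with $r=\lfloor n/p\rfloor$ and $l=\lfloor n/q\rfloor\ge 1$ (since $q\mid|\Al_n|$). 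So the correct target is $k_\pi(\Al_n)\le p^{r-1}q^l$, which is weaker by a factor of at least $q\ge 5$. This is precisely why your $m=1$ case collapsed into the impossible demand $k_\pi\le 1$: once the missing $q^l$ is restored, you only need $k_\pi\le q$, and since with $r=1$ there are at most the types $(1^n)$, $(p,1^{n-p})$, $(q,1^{n-q})$, $(p,q,1^{n-p-q})$ (the last only if $n\ge p+q$), plus at most one extra from $\Al_n$-splitting, you get $k_\pi\le 5\le q$ immediately.

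With that correction your argument is essentially the paper's. The paper organizes the case analysis slightly differently: it separates $n\ge p+q+2$ (where no $\pi$-class of $\Sy_n$ splits in $\Al_n$, because any type $(p^a,q^b,1^c)$ with $a,b\le 1$ then has $c\ge 2$) from $n\le p+q+1$ (where $l=1$ and at most one extra class arises from splitting). In the first range it uses $k_\pi\le (r+1)(l+1)\le pq$ against $p^rq^l$, treating $(r,l)=(1,1)$ separately with $k_\pi\le 4<q$; in the second range it bounds $k_\pi\le 2(r+1)+1=2r+3$ against $p^r q$ and checks $r=1$ (where $2r+3=5\le q$) and $r\ge 2$ directly. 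Your bound $k_\pi(\Sy_n)\le(r+1)(l+1)$ and your observation that splitting occurs only for types with distinct parts are the same ingredients; you just need to plug in the right denominator.
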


\begin{proof}
Let $P \in \Syl_{p}(\Al_{n})$ and $Q \in \Syl_{q}(\Al_{n})$. Since
$P/\bZ(P)$ is abelian, $n < p^{2}$ by Lemma \ref{sylalt}. Let $n=rp+s=lq+t$, where $r,s\in \{0,1,\ldots,p-1\}$ and
$l,t \in \{0,1,\ldots,q-1\}$. Then $P=(C_{p})^{r}$ and
$Q=(C_{q})^{l}$ with both $r$ and $l$ at least $1$.

It is easy to see that every $\pi$-element of $\Al_{n}$ can be
expressed as a product of the form $xy=yx$, where $x$ is a product
of cycles of length $p$ and $y$ is a product of cycles of length
$q$. Since $n < p^{2}$, the supports of $x$ and $y$ are disjoint.

Assume first that $n \geq p+q+2$. In this case we have that
$k_{p}(\Al_{n})=1+r\leq p$, $k_{q}(\Al_{n})=1+l\leq q$ and
$|\Al_{n}|_{\pi}=p^{r}q^{l}$. Thus we have
$$\dd_{\pi}(\Al_{n})=\frac{k_{\pi}(\Al_{n})}{|\Al_{n}|_{\pi}}\leq \frac{pq}{p^{r}q^{l}}.$$
If $(r,l)\not=(1,1)$, then $\dd_{\pi}(\Al_{n})\leq
\frac{1}{p}$. Assume now that $r=l=1$. Then $k_{\pi}(\Al_{n})\leq
k_{p}(\Al_{n})k_{q}(\Al_{n})=4$ and hence $\dd_{\pi}(\Al_{n})\leq
\frac{4}{q}\frac{1}{p}<\frac{1}{p}$, where the last inequality holds
because $q \geq 5$.

Assume now that $n\leq p+q+1$ and so $l = 1$. In this case it may happen that a
$\Sigma_{n}$-conjugacy class of $\pi$-elements splits in two
different $\Al_{n}$-conjugacy classes. We thus have $k_{\pi}(\Al_{n})\leq
(1+r)(1+l) + 1 = 2(1+r)+1 = 2r+3$. It follows that $\dd_{\pi}(\Al_{n}) \leq \frac{2r+1}{p^{r}q}$. If $r \geq 2$, then $\frac{2r+3}{p^{r}q} < \frac{1}{q} < \frac{1}{p}$. If $r=1$, then $2r+3 = 5 \leq q$ and so once again $\dd_{\pi}(\Al_{n}) \leq \frac{1}{p}$.
\end{proof}

For convenience, we will consider the Tits group ${}^2F_4(2)'$ as a
sporadic simple group.

\begin{thm}\label{prop:sporadic}
Let $S$ be a sporadic simple group and $\pi = \{ p, q \}$ where $p <
q$ are odd  primes dividing $|S|$. If $(S,\pi)\not=(J_1,\{3,5\})$
then $\dd_{\pi}(S) \leq \frac{1}{p}$. In particular, Theorem
\ref{teoB} holds for $S$.
\end{thm}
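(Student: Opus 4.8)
The plan is to verify the claim by a finite check over the $26$ sporadic groups (plus the Tits group), using the known character tables / class data as recorded in the ATLAS and the GAP character table library. For each sporadic $S$ and each pair of odd primes $p<q$ dividing $|S|$, one must compute $k_\pi(S)$, the number of conjugacy classes of $\{p,q\}$-elements, and compare $p\cdot k_\pi(S)$ with $|S|_\pi = |S|_p|S|_q$. Since $k_\pi(S)$ is just the number of classes whose element order is a $\{p,q\}$-number (including the identity), this is directly readable from the list of class orders; likewise $|S|_p$ and $|S|_q$ are read from $|S|$. So the proof is essentially an organized tabulation.

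To keep the argument humane rather than a brute $27$-fold list, I would first isolate general reductions. First, $d_\pi(S)\le d_p(S) = k_p(S)/|S|_p$ by Lemma~\ref{dpi}(i), so whenever $k_p(S) < |S|_p/p = |S|_p/p$, i.e.\ $p\cdot k_p(S) < |S|_p$, we are done regardless of $q$; this handles every prime $p$ for which the Sylow $p$-subgroup is "large" compared with the number of $p$-classes, which is the typical situation for $p=3$ (and a fortiori larger primes) in most sporadic groups. Second, by symmetry one also has $d_\pi(S)\le d_q(S)$, giving a cheap bound $d_\pi(S) \le k_q(S)/|S|_q$; since $q\ge 5 > p$, $k_q(S)/|S|_q \le 1/p$ already whenever $p\cdot k_q(S)\le |S|_q$, again a frequent occurrence. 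Third, one always has the crude bound $k_\pi(S) \le (\text{number of } p\text{-classes})\cdot(\text{number of }q\text{-classes})$ only if every $\pi$-element factors as a commuting product of a $p$- and a $q$-element with the two factors determined up to conjugacy — this need not hold in general, so instead I would simply use $k_\pi(S)\le k_p(S)\cdot k_q(S)$ as a genuine upper bound is \emph{not} valid; rather the safe inequality is $k_\pi(S) \le$ (total number of classes with $\pi$-number order), read off directly. After these reductions only a short list of $(S,\pi)$ survives, on which one checks the inequality $p\cdot k_\pi(S)\le |S|_\pi$ by hand.

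The genuinely exceptional pair is $(J_1,\{3,5\})$. Here $|J_1| = 2^3\cdot 3\cdot 5\cdot 7\cdot 11\cdot 19$, so $|J_1|_\pi = 15$ while $J_1$ has classes of element orders $1,3,5,5,15,15$ among $\{3,5\}$-elements; thus $k_\pi(J_1) = 6$ and $d_\pi(J_1) = 6/15 = 2/5 > 1/3 = 1/p$. This is exactly why $J_1$ with $\pi=\{3,5\}$ must be excluded from the statement, and one notes separately — as will be needed when assembling Theorem~\ref{teoB} from this proposition — that $J_1$ nonetheless has a (cyclic, hence nilpotent) Hall $\{3,5\}$-subgroup of order $15$, so Theorem~\ref{teoB}'s conclusion is not actually contradicted: the hypothesis "$q$ divides $|G:\mathbf N_G(P)|$" fails for $J_1$, since $\mathbf N_{J_1}(P_3)$ has order divisible by $5$. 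I expect the main obstacle to be bookkeeping: ensuring that for the handful of large sporadic groups ($B$, $M$, and the Fischer groups) one has correctly read off the number of classes of $\{p,q\}$-elements for the small primes $p=3$ and $q\in\{5,7,\dots\}$ where $|S|_p$ is also small (e.g.\ $p$ or $q$ dividing $|S|$ only once), since those are the cases where the inequality is tightest and a miscount would break the proof; everything else is comfortably slack.
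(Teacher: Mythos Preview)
Your proposal is correct and follows the same overall approach as the paper: a finite Atlas check over the sporadic groups (including the Tits group) with preliminary reductions to cut down the casework, together with the observation that for the lone exception $(J_1,\{3,5\})$ the hypothesis of Theorem~\ref{teoB} fails because $5\nmid|J_1:\mathbf N_{J_1}(P)|$. The paper's reductions differ only in detail---it first discards all pairs with $k_\pi(S)\le 5$ via $d_\pi(S)\le 5/(pq)\le 1/p$ (using $q\ge 5$), and then for the surviving groups bounds $k_\pi(S)$ by the total number of odd-order classes of $S$, which for most sporadics already falls below $|S|_r$ for enough primes $r$ to rule them out---but the strategy and the endpoint are the same as yours.
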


\begin{proof}
In what follows we use information in \cite{Atlas} without further
notice. We may assume that $\pi$ is a set of primes such that
$k_{\pi}(S) \geq 6$, for otherwise
$$\dd_{\pi}(S) = \frac{k_{\pi}(S)}{|S|_{\pi}} \leq \frac{5}{pq} \leq  \frac{1}{p}$$
There is no such $\pi$ for the four smallest Mathieu groups. For
each of the groups $M_{24}$, $HS$, $J_{2}$ there are two
possibilities for $\pi$. In each of the six cases $k_{\pi}(S)$ is at
most $|S|_{p}$ or $|S|_{q}$ and this is sufficient to obtain the
bound $\dd_{\pi}(S) \leq  \frac{1}{p}$.

So we assume that $S$ is not one of the groups already analyzed. If
$S$ is different from $Fi_{23}$, $Fi_{24}'$ and $J_1$, then we count
the total number of conjugacy classes of $S$ of elements of odd
order. These numbers are usually less that $|S|_r$ for a given prime
divisor $r$ of $|S|$. If this is the case for a prime $r$, then we
can assume that $r$ does not lie in $\pi$ (otherwise we would be
done). This gives strong restrictions on the set $\pi$. In fact,
given that $k_{\pi}(S) \geq 6$, we find this way that $S$ must be
$J_4$ and $\pi$ is either $\{ 3,7 \}$ or $\{ 5,7 \}$. In each of
these two cases we count the number of $\pi$-classes in $S$ to
obtain our bound of $ \frac{1}{p}$ for $\dd_{\pi}(S)$.

If $S$ is $Fi_{23}$ or $Fi_{24}'$, then we again count the number of
conjugacy classes of $S$ of elements of odd order. This allows us to conclude
that $3$ cannot lie in $\pi$.
We then count the number of conjugacy classes of $S$ whose elements have orders
divisible neither by $2$ nor $3$. This number is $8$ in the first case
and $14$ in the second. By
looking at the prime factorization of $|S|$, the only case to consider
is $S = Fi_{24}'$ and $\pi = \{ 11, 13 \}$. But it turns out that
$k_{\pi}(S) = 3$ in this case.

The only group remaining is $S = J_1$. The number of conjugacy classes
of $S$ of elements of odd order is $11$ forcing $\pi$ to be a subset of $\{ 3, 5, 7
\}$. Then $k_{\pi}(S) = 3$
or $\pi = \{ 3, 5 \}$ and $k_{\pi}(S) = 6$, giving $\dd_{\pi}(S) = \frac{2}{5}$.

The last assertion follows from the fact that if $P \in
\Syl_3(J_{1})$, then $5$ does not divide
$|J_1:\mathbf{N}_{J_{1}}(P)|$.
\end{proof}

We are left with the case of simple groups of Lie type $S\neq
{}^2F_4(2)'$. For the sake of convenience, we rename the prime $q$
in Theorem \ref{teoB} to $s$ in order to reserve $q$ for the size of
the underlying field of $S$.

The proof of Theorem \ref{teoB} for groups of Lie type is divided
into two fundamentally different cases: $\pi$ contains the defining
characteristic of $S$ and $\pi$ does not. The former case is fairly
straightforward.

\begin{thm}\label{thm:defining char}
Let $S$ be a finite simple group of Lie type in characteristic $p>2$
and $\pi=\{p,s\}$, where $s$ is an odd prime dividing $|S|$. Then,
$$\dd_{\pi}(S)\leq \frac{1}{s}.$$
In particular, Theorem \ref{teoB} holds for simple groups of Lie
type when $\pi$ contains the defining characteristic of $S$.
\end{thm}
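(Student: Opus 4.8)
The plan is to count $\pi$-classes via the Jordan decomposition. A $\pi$-element $x$ of $S$ factors as $x=x_sx_p=x_px_s$, where $x_p$ (the $p$-part) is its unipotent part and $x_s$ (the $s$-part) is its semisimple part; both are powers of $x$. Two $\pi$-elements are $S$-conjugate precisely when their semisimple parts are $S$-conjugate, to a common $t$ say, and their unipotent parts are then $\bC_S(t)$-conjugate. Hence
$$k_\pi(S)=\sum_{[t]}k_p(\bC_S(t)),$$
where $[t]$ runs over the $S$-classes of semisimple $s$-elements of $S$ (including $t=1$), and $k_p(H)$ is the number of conjugacy classes of $p$-elements of $H$. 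Since $\dd_\pi(S)=k_\pi(S)/(|S|_p|S|_s)$, the statement to be proved becomes $\sum_{[t]}k_p(\bC_S(t))\le |S|_p|S|_s/s$.

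The first input is that $k_p(\bC_S(t))$ is bounded by a constant $c=c(\mathbf G)$ depending only on the type of the ambient algebraic group $\mathbf G$. For $t=1$ this is the number of unipotent classes of $S$, finite by Lusztig's theorem and bounded in terms of the type (the only $q$-dependence, the splitting of a geometric class into $F$-classes, being controlled by the bounded component groups of unipotent centralizers). For $t\neq1$, $\bC_{\mathbf G}(t)^\circ$ is a proper connected reductive subgroup of $\mathbf G$ of maximal rank, so the number of its unipotent classes is again $\le c$; when $t$ is regular semisimple, $\bC_S(t)$ is a torus and $k_p(\bC_S(t))=1$. Therefore
$$k_\pi(S)\ \le\ c\cdot k_s(S)\ \le\ c\cdot|S|_s,$$
the last step using Robinson's bound $\dd_s(S)\le1$. (A further refinement, if needed: for $t\neq1$ one has $|\bC_S(t)|_p<|S|_p$, since a Sylow $p$-subgroup of $S$ is the unipotent radical of a Borel subgroup and, in the adjoint group, is self-centralizing, hence not centralized by the nontrivial $p'$-element $t$.)

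The second input is numerical: every prime $s\neq p$ dividing $|S|$ satisfies $s<|S|_p$, and in fact $s\le|S|_p/c$ unless $\mathbf G$ has rank $1$. This is read off from the order formula, a prime divisor of $|S|_{p'}$ dividing some cyclotomic value $\Phi_d(q)$ with $d$ at most the Coxeter number, while $\Phi_d(q)<q^d$ lies comfortably below $|S|_p=q^N$ (with $N$ the number of positive roots) as soon as the rank is at least $2$; for $\PSL_2(q)$, $\PSU_3(q)$ and ${}^2G_2(q)$ one checks $s<|S|_p$ directly, using that $q$ is odd (so $q\pm1$ is even with largest prime factor at most $(q+1)/2<q$) and, in the Ree case, the factorization $q^2-q+1=(q+1-3^{m+1})(q+1+3^{m+1})$. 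Putting the two inputs together, $k_\pi(S)\le c\,|S|_s\le(|S|_p/s)\,|S|_s=|S|_\pi/s$, i.e. $\dd_\pi(S)\le1/s$, for all $S$ outside a finite list.

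The main obstacle is precisely that finite list: the low-rank families over small fields, where $s\le|S|_p/c$ fails — the extreme case being $\PSL_2(q)$ with $s=(q+1)/2$ prime, and also $\PSL_3(q)$, $\PSU_3(q)$, ${}^2G_2(q)$ and the smallest members of the rank-$2$ families. For these I would drop the generic constant and argue from explicit conjugacy-class data. For $\PSL_2(q)$ this is transparent: the centralizer of a nontrivial unipotent element is a $p$-group up to a central involution, so no $\pi$-element has order divisible by both $p$ and $s$, whence $k_\pi(\PSL_2(q))=k_p(S)+k_s(S)-1$ and the inequality reduces to a short arithmetic check. For the remaining small groups one applies the identity $k_\pi(S)=\sum_{[t]}k_p(\bC_S(t))$ directly, listing the semisimple $s$-classes and the unipotent classes of their centralizers — a bounded computation in each of finitely many cases. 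A minor technical caveat is that this centralizer analysis is cleanest in a simply connected or adjoint cover of $S$, the bounded center and any disconnected centralizers affecting only the value of the constant $c$.
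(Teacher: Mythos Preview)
Your approach is correct in outline and genuinely different from the paper's. You parametrize $\pi$-classes via the Jordan decomposition, writing $k_\pi(S)=\sum_{[t]}k_p(\bC_S(t))$, then invoke Lusztig's finiteness theorem to bound each summand by a type-dependent constant $c$, reducing the problem to the numerical inequality $cs\le |S|_p$. The paper instead uses the cruder bound $k_\pi(S)\le k(S)$ and then cites the explicit Fulman--Guralnick estimates \cite{FG} to check directly that $k(S)\le |S|_p$ once the rank is at least $3$ or $4$, after which $\dd_\pi(S)\le k(S)/(|S|_p|S|_s)\le 1/|S|_s\le 1/s$ is immediate; the low-rank families $\PSL_2$, $\PSL_3$, $\PSU_3$, $\PSp_4$, $G_2$, ${}^2G_2$ are then handled by quoting their known class lists. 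The paper's route is shorter because it outsources the hard work to a single reference and avoids any discussion of centralizer structure, component groups, or the passage between $\mathbf G^F$ and $S$.

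Your route buys more conceptual insight---the identity $k_\pi(S)=\sum_{[t]}k_p(\bC_S(t))$ is nice and explains \emph{why} the count is small---but it costs more machinery (Lusztig's theorem, the reductive structure of semisimple centralizers) and, crucially, never pins down the constant $c$. Without an explicit $c$ the claim ``$s\le|S|_p/c$ unless $\mathbf G$ has rank $1$'' is not verified, and in fact it fails for the smallest $q$ in each rank-$2$ family, as you acknowledge. So both arguments converge on the same residual case analysis; the paper's version simply gets there faster and with less to justify along the way. One small point: your reduction from $S$ to $\mathbf G^F$ (the ``minor technical caveat'') is a little more delicate than you suggest, since $\bC_S(t)$ can strictly contain the image of $\bC_{\mathbf G^F}(\tilde t)$; the coprimality of $|Z(\mathbf G^F)|$ with $p$ does rescue it, but this deserves a line of argument rather than a parenthetical.
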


\begin{proof}
First we observe that the desired inequality is satisfied if
$k_\pi(S)\leq |S|_p$. We shall make use of well-known bounds of Fulman and Guralnick \cite{FG} for the numbers of conjugacy classes of finite Chevalley
groups to show that, when $S$ has high enough rank, even the
stronger inequality $k(S)\leq |S|_p$ holds true.

Let $S=\PSL(n,q)$. Then $k(S)\leq \min\{2.5
q^{n-1},q^{n-1}+3q^{n-2}\}$ by \cite[Proposition 3.6]{FG}. This is
certainly smaller than $|S|_p=q^{n(n-1)/2}$ if $n\geq 4$. Therefore,
we just need to verify the theorem for $n=2$ or $3$. The theorem is
in fact straightforward to verify for  these low rank cases, using
the known information on conjugacy classes of the group (see
\cite[Chapter 38]{Dornhoff} for $n=2$ and \cite{SF73} for $n=3$).
The case $S=\PSU(n,q)$ is entirely similar.

Next, we consider $\PSp(2n,q)$ with $n\geq 3$. Then $k(S)\leq 10.8
q^n$ for odd $q$, and it easily follows that $k(S)\leq
|S|_p=q^{n^2}$. The case of orthogonal groups is similar, with a
remark that $k(\Omega(2n+1,q))\leq 7.3 q^n$ for $n\geq 2$ and
$k(\mathrm{P}\Omega^\pm(2n,q))\leq 6.8 q^n$ for $n\geq 4$.

Now we turn to exceptional groups. Recall that the defining
characteristic $p$ of $S$ is odd, so we will exclude the types
${}^{2}_{}B_{2}$ and ${}^{2}_{}F_{4}$. By \cite[Table 1]{FG} (or
\cite{Lubeck} for more details), we observe that $k(S)$ is bounded
above by a polynomial with positive coefficients, say $g_{S}$,
evaluated at $q$. Suppose $S$ is one of ${}^{3}_{}D_{4}(q)$,
$F_{4}(q)$, $E_{6}(q)$, ${}^{2}_{}E_{6}(q)$, $E_{7}(q)$, or
$E_{8}(q)$. We then have
$$k(S)\leq g_S(1)q^{\deg(g_S)}\leq 252q^{\deg(g_S)} \text{  and }
\frac{q^{\deg(g_S)}}{|S|_{p}}\leq \frac{1}{q^8}.$$ Therefore,
$$\dd_{\pi}(S)\leq \frac{k(S)}{|S|_{p}|S|_{s}}\leq
\frac{252}{sq^8}<\frac{1}{s},$$ as wanted. The remaining cases of
the types $G_2$ and ${}^2G_2$ are even easier, using the more
refined bounds $k(G_{2}(q))\leq q^2+2q+9$ and
$k({}^{2}_{}G_{2}(q))\leq q+8$.
\end{proof}

\begin{lem}\label{centers}
Let $G$ be a finite group and let $\pi$ be a set of primes such that
$|\bZ(G)|_{\pi}=1$. Then, $k_{\pi}(G)=k_{\pi}(G/\bZ(G))$.
\end{lem}

\begin{proof}
Let $Z: = \bZ(G)$. Every coset $gZ$ of $Z$ in $G$ contains at most
one $\pi$-element  of $G$ since $|Z|_{\pi} = 1$. The $\pi$-elements
of $G/Z$ are $gZ$ where $g$ runs  through the $\pi$-elements of $G$.
If $g$ is a $\pi$-element, then the conjugacy  class of $gZ$ in
$G/Z$ consists of $hZ$ where $h\in g^G$. Thus, there is a bijection
between the $\pi$-conjugacy classes of $G$ and the $\pi$-conjugacy
classes of $G/Z$.
\end{proof}

In the case when $\pi$ does not contain the defining characteristic
of $S$, the conjugacy classes of $\pi$-elements of $S$ will be
semisimple classes, which can be conveniently described via an
ambient algebraic group of $S$ and its Weyl group.

It is well-known that every simple group of Lie type $S\neq
{}^2F_4(2)'$ is of the form
$S=\textbf{G}^{F}/\mathbf{Z}(\textbf{G}^{F})$ for some simple
algebraic group $\textbf{G}$ of simply connected type and a
suitable Steinberg endomorphism $F$ on $\mathbf{G}$, see \cite[Theorem
24.17]{MT} for instance.

\begin{thm}\label{quasisimple}
Let $S$ be a finite simple group of Lie type and $\mathbf{G}, F$ as
above. Let $\pi=\{p,s\}$ with $p<s$ be a set of primes not
containing the defining characteristic of $S$. Suppose that $s$
divides $|\Syl_{p}(S)|$. Then \[\dd_{\pi}(\textbf{G}^{F})\leq
\frac{1}{p}.\] In particular, if
$|\mathbf{Z}(\textbf{G}^{F})|_{\pi}=1$, then $\dd_{\pi}(S)\leq
\frac{1}{p}$.
\end{thm}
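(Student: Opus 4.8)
The plan is to estimate $k_\pi(\mathbf{G}^F)$ and compare it with $|\mathbf{G}^F|_\pi=|\mathbf{G}^F|_p\,|\mathbf{G}^F|_s$. Since $\pi$ avoids the defining characteristic $p_0$ of $S$, every $\pi$-element $g$ of $\mathbf{G}^F$ is semisimple: its unipotent Jordan part is a power of $g$ and a $p_0$-element, hence trivial. Writing such a $g$ as $g=g_pg_s$ with $g_p,g_s$ its $p$- and $s$-parts, noting $g_s\in\mathbf{C}_{\mathbf{G}^F}(g_p)$, and doing the routine conjugacy bookkeeping, one obtains the identity
$$k_\pi(\mathbf{G}^F)\ =\ \sum_{[t]}\ k_s\bigl(\mathbf{C}_{\mathbf{G}^F}(t)\bigr),$$
where $[t]$ runs over the conjugacy classes of $p$-elements of $\mathbf{G}^F$. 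Because $\mathbf{G}$ is simply connected, each $\mathbf{C}_{\mathbf{G}^F}(t)$ is the fixed-point group of a connected reductive group of maximal rank, so each $k_s(\mathbf{C}_{\mathbf{G}^F}(t))$ can be controlled through the parametrization of semisimple classes via maximal tori and relative Weyl groups, together with the Fulman--Guralnick polynomial bounds for numbers of conjugacy classes. Granting the inequality $k_\pi(\mathbf{G}^F)\le|\mathbf{G}^F|_\pi/p$, the last assertion follows from Lemma \ref{centers}: when $|\mathbf{Z}(\mathbf{G}^F)|_\pi=1$ one has $k_\pi(S)=k_\pi(\mathbf{G}^F)$ and $|S|_\pi=|\mathbf{G}^F|_\pi$, hence $\dd_{\pi}(S)=\dd_{\pi}(\mathbf{G}^F)$.

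The hypothesis $s\mid|\mathrm{Syl}_p(S)|$ is what makes this sum small. With $P\in\mathrm{Syl}_p(\mathbf{G}^F)$ and $d=d_p$ the multiplicative order of $q$ modulo $p$, the assumption gives $s\mid|\mathbf{G}^F:\mathbf{N}_{\mathbf{G}^F}(P)|$, hence $|\mathbf{N}_{\mathbf{G}^F}(P)|_s<|\mathbf{G}^F|_s$; combined with the fact that a Sylow $p$-subgroup sits, in the generic range, inside the normalizer of a Sylow $\Phi_d$-torus $\mathbf{S}$, this forces $d_s\neq d$ and $s\nmid|\mathbf{N}_{\mathbf{G}^F}(\mathbf{S})/\mathbf{C}_{\mathbf{G}^F}(\mathbf{S})|$. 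The effect is a decoupling of the $p$- and $s$-parts of the centralizers occurring in the sum: for a nontrivial $p$-element $t$, the group $\mathbf{C}_{\mathbf{G}^F}(t)$ cannot contain a full Sylow $s$-subgroup of $\mathbf{G}^F$, so the terms with $t\neq 1$ are each controlled by a proper power of $s$; and the single term $t=1$ contributes $k_s(\mathbf{G}^F)\le|\mathbf{G}^F|_s$, which after dividing by $|\mathbf{G}^F|_\pi$ is at most $1/|\mathbf{G}^F|_p\le 1/p$ — and in fact strictly smaller, with room to absorb the remaining terms, since for a simple group of Lie type $k_s(\mathbf{G}^F)$ is an appreciable fraction below $|\mathbf{G}^F|_s$.

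Turning these heuristics into sharp inequalities is a case analysis over the Lie types. For classical groups the $p$-classes, their centralizers, and the semisimple $s$-classes of the latter are all parametrized by (signed) partitions, so the estimates reduce to bookkeeping with the cyclotomic values $\Phi_e(q)$; the low-rank groups $\mathrm{PSL}_2$, $\mathrm{PSL}_3$, $\mathrm{PSU}_3$ are verified directly from their known class data. For exceptional groups one uses the known lists of orders of maximal tori and the known relative Weyl groups of the Sylow $\Phi_e$-tori, together with the crude Fulman--Guralnick bounds for $k(\mathbf{C}_{\mathbf{G}^F}(t))$. I expect the bulk of the difficulty to be exactly this type-by-type verification, and within it two tight spots: the small primes $p\in\{3,5\}$ over small fields $q$, where a Sylow $p$-subgroup need not be confined to a single $\Phi_d$-torus normalizer and the clean generic dichotomy must be replaced by ad hoc computation; and the large Weyl groups of $E_7$ and $E_8$, where one must track the $p$- and $s$-adic carries in the orders $|\mathbf{T}_w^F|$ carefully so that they do not swamp the savings provided by the hypothesis.
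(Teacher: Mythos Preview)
Your decomposition $k_\pi(\mathbf{G}^F)=\sum_{[t]}k_s(\mathbf{C}_{\mathbf{G}^F}(t))$ is correct, and the final deduction of $\dd_\pi(S)=\dd_\pi(\mathbf{G}^F)$ from Lemma~\ref{centers} matches the paper. But everything in between is a heuristic programme rather than a proof, and the programme is far harder than necessary. Two concrete gaps: first, the assertion that for every nontrivial $p$-element $t$ the centraliser $\mathbf{C}_{\mathbf{G}^F}(t)$ misses a full Sylow $s$-subgroup is not derived from the hypothesis; the hypothesis controls $\mathbf{N}_S(P)$ for a \emph{full} Sylow $p$-subgroup $P$, not centralisers of individual $p$-elements, and you give no argument bridging the two. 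Second, even granting that heuristic, the passage ``with room to absorb the remaining terms'' is doing all the work and you never cash it out; the promised type-by-type verification with its acknowledged tight spots for $p\in\{3,5\}$ and for $E_7,E_8$ is simply absent.

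The paper's argument avoids all of this and is completely uniform, with no case analysis over Lie types. One first observes that $\mathbf{G}^F$ cannot have an \emph{abelian} Hall $\pi$-subgroup: if $H$ were one, its image $\overline H$ in $S$ would be an abelian Hall $\pi$-subgroup, hence $\overline H\leq\mathbf{N}_S(P)$ for the Sylow $p$-subgroup $P\leq\overline H$, contradicting $s\mid|S:\mathbf{N}_S(P)|=|\Syl_p(S)|$. Next, since $\pi$ avoids the defining characteristic, $\pi$-classes are semisimple, and by the order-preserving bijection of \cite[Theorem 3.15]{GGG} refining \cite[Propositions 3.7.3--3.7.4]{Carter85} one has
\[
k_\pi(\mathbf{G}^F)=\frac{1}{|W|}\sum_{w\in W}\bigl|\mathbf{T}^{w^{-1}F}\bigr|_\pi,
\qquad\text{so}\qquad
\dd_\pi(\mathbf{G}^F)=\frac{1}{|W|}\sum_{w\in W}\frac{|\mathbf{T}^{w^{-1}F}|_\pi}{|\mathbf{G}^F|_\pi}.
\]
If any term in this average equalled $1$, i.e.\ $|\mathbf{T}^{w^{-1}F}|_\pi=|\mathbf{G}^F|_\pi$, then the abelian group $\mathbf{T}^{w^{-1}F}$ would contain an abelian Hall $\pi$-subgroup of $\mathbf{G}^F$, contradicting the first step. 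Hence every term is a proper $\pi$-fraction, so at most $1/p$, and the average is at most $1/p$. That is the whole proof. The missed idea is to use the torus--Weyl group counting formula directly for $\pi$-classes and to interpret ``some $\mathbf{T}^{wF}$ has full $\pi$-part'' as ``$\mathbf{G}^F$ has an abelian Hall $\pi$-subgroup'', which is exactly what the hypothesis rules out.
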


\begin{proof}
Let $G:=\textbf{G}^F$.  We first claim that a Hall $\pi$-subgroup of
$G$, if exists, cannot be abelian. Assume by contradiction that $G$
does have such subgroup, say $H$. Then
$\overline{H}:=H\bZ(G)/\bZ(G)$ would be an abelian Hall
$\pi$-subgroup of $S$, implying that $\mathbf{N}_S(P)$ contains
$\overline{H}$, where $P$ is a Sylow $p$-subgroup of $S$ that is
contained in $\overline{H}$. It follows that $s$ does not divide
$|S:\mathbf{N}_S(P)|$, violating the hypothesis.

Let $\textbf{T}$ be an $F$-stable  maximal  torus of $\mathbf{G}$,
and let $W=N_{\textbf{G}}(\textbf{T})/\textbf{T}$ be the Weyl group
of $\textbf{G}$. Since $\pi$ does not contain the defining
characteristic of $S$, the conjugacy classes of $\pi$-elements of $G$
are semisimple classes. According to \cite[Proposition
3.7.3]{Carter85} and its proof, there is a well-defined bijection
\[
\tau:\mathrm{Cl}_{ss}(G)\rightarrow (\mathbf{T}/W)^F
\]
between the set $\mathrm{Cl}_{ss}(G)$ of semisimple conjugacy
classes of $G$ and the set $(\mathbf{T}/W)^F$ of $F$-stable orbits
of $W$ on $\mathbf{T}$. Malle, Navarro, and Robinson showed in
\cite[Theorem 3.15]{GGG} that this bijection $\tau$ preserves
element orders, and therefore the counting formula (and its proof)
for the number of $F$-stable orbits of $W$ on $\mathbf{T}$ in
\cite[Proposition 3.7.4]{Carter85} implies that
$$k_{\pi}(G)=\frac{1}{|W|}\sum_{w \in W}|\textbf{T}^{w^{-1}F}|_{\pi}.$$
It follows that
$$\dd_{\pi}(G)=\frac{1}{|W|}\sum_{w \in W}\frac{|\textbf{T}^{w^{-1}F}|_{\pi}}{|G|_{\pi}}.$$

Now, if $|\textbf{T}^{w^{-1} F}|_\pi=|G|_\pi$ for some $w \in W$ then a Hall
$\pi$-subgroup of $\textbf{T}^{w^{-1} F}$, which is abelian, would
be a Hall $\pi$-subgroup of $G$, and this contradicts the above
claim. Thus \[\frac{|\textbf{T}^{w^{-1} F}|_\pi}{|G|_\pi}\leq 1/p\]
for every $w\in W$. It then follows that
$$\dd_{\pi}(G)\leq \frac{1}{p},$$ proving the first part of the
theorem.

For the second part, assume that $|\bZ(G)|_{\pi}=1$. By Lemma
\ref{centers}, we then have
$$\dd_{\pi}(S)=\dd_{\pi}(G/\bZ(G))=\dd_{\pi}(G)\leq \frac{1}{p},$$
as stated.
\end{proof}

Theorem \ref{quasisimple} already proves Theorem \ref{teoB} in
several cases, as seen in the next result. In what follows, to unify
the notation, we use $\GL^\epsilon$, $\SL^\epsilon$ and
$\PSL^{\epsilon}$ for linear groups when $\epsilon=+$ and for
unitary groups when $\epsilon=-$. We also use $E_6^+$ for $E_6$ and
$E^-_6$ for ${}^2E_6$.

\begin{thm}\label{prop:ABC}
Let $S$ be a simple group of Lie type, $\pi$ be a set of two odd
primes not containing the defining characteristic of $S$, and $p$ be
the smaller prime in $\pi$. Assume that we are not in one of the
following situations:

\begin{itemize}
\item[(i)] $S=E^\epsilon_{6}(q)$ and $3 \in \pi$.

\item [(ii)] $S=\PSL^{\epsilon}(n,q)$ with $n \geq 3$ and
$\gcd(n,q-\epsilon)_{\pi}\not=1$.
\end{itemize}
Then $\dd_{\pi}(S)\leq \frac{1}{p}$.
\end{thm}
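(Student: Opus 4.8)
The plan is to deduce the theorem from Theorem~\ref{quasisimple} by verifying in each case that the $\pi$-part of the center $\mathbf{Z}(\mathbf{G}^F)$ is trivial, where $S=\mathbf{G}^F/\mathbf{Z}(\mathbf{G}^F)$ with $\mathbf{G}$ simply connected. Note first that we have already reduced to the situation where $s$ (the larger prime, called $q$ in Theorem~\ref{teoB}) divides $|G:\mathbf{N}_G(P)|$, so the hypothesis ``$s$ divides $|\Syl_p(S)|$'' of Theorem~\ref{quasisimple} is available; and $p>2$ throughout. So the entire content is: \emph{if $|\mathbf{Z}(\mathbf{G}^F)|_\pi\neq 1$, then we are in case (i) or (ii).} The order of $\mathbf{Z}(\mathbf{G}^F)$ for $\mathbf{G}$ simply connected is well known and divides the order of the fundamental group of the root system: it is a divisor of $\gcd(n,q-\epsilon)$ for type $A_{n-1}^\epsilon$ (giving $\PSL^\epsilon(n,q)$), of $\gcd(2,q-1)$ for types $B_n,C_n$ and $E_7$, of $\gcd(4,q^n\mp 1)$ (resp.\ $\gcd(2,q-1)^2$) for type $D_n$, of $\gcd(3,q-\epsilon)$ for type $E_6^\epsilon$, and is trivial for types $G_2$, $F_4$, $E_8$, and the Suzuki/Ree families.

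First I would dispose of the types whose simply connected center has order dividing $2$ or is trivial: $B_n$, $C_n$, $D_n$, $E_7$, $E_8$, $F_4$, $G_2$, ${}^3D_4$, ${}^2B_2$, ${}^2G_2$, ${}^2F_4$. Since $\pi=\{p,s\}$ consists of \emph{odd} primes, any $2$-group center contributes nothing to $|\mathbf{Z}(\mathbf{G}^F)|_\pi$, so $|\mathbf{Z}(\mathbf{G}^F)|_\pi=1$ and Theorem~\ref{quasisimple} applies directly. That leaves exactly the two ``bad'' families: type $A_{n-1}^\epsilon$, where $|\mathbf{Z}(\mathbf{G}^F)|$ divides $\gcd(n,q-\epsilon)$, and type $E_6^\epsilon$, where it divides $\gcd(3,q-\epsilon)$. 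For $S=E_6^\epsilon(q)$, if $3\notin\pi$ then (as $\pi$ consists of odd primes $\neq$ the defining characteristic, and the only prime that can divide $\gcd(3,q-\epsilon)$ is $3$) we get $|\mathbf{Z}(\mathbf{G}^F)|_\pi=1$ and are done; the excluded case is precisely (i). For $S=\PSL^\epsilon(n,q)$: if $n\leq 2$ there is nothing extra (this is type $A_1$, center of order dividing $2$, hence $\pi$-coprime); if $n\geq 3$ and $\gcd(n,q-\epsilon)_\pi=1$ then $|\mathbf{Z}(\mathbf{G}^F)|_\pi$ divides $\gcd(n,q-\epsilon)_\pi=1$, so again Theorem~\ref{quasisimple} applies; and the excluded case is precisely (ii).

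I would then assemble this as: let $\mathbf{G}$ be simply connected with $S=\mathbf{G}^F/\mathbf{Z}(\mathbf{G}^F)$; since we may assume $s$ divides $|\Syl_p(S)|$ (Theorem~\ref{teoB}'s hypothesis together with the observation after it) and $p,s$ are odd, invoke Theorem~\ref{quasisimple} once we check $|\mathbf{Z}(\mathbf{G}^F)|_\pi=1$; run through the list of types, citing the standard formula for $|\mathbf{Z}(\mathbf{G}^F)|$ (e.g.\ from \cite[Table 24.2]{MT}); observe that outside types $A_{n-1}^\epsilon$ and $E_6^\epsilon$ the center is a $2$-group or trivial; and in those two remaining types the hypothesis that we are not in situation (i) or (ii) forces the relevant gcd to be $\pi$-coprime. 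The only mild subtlety is bookkeeping with the twisted types (making sure $q-\epsilon$ versus $q^n\pm 1$ is used correctly, and that ${}^3D_4$, ${}^2E_6$, etc.\ are slotted in with the right center), but there is no real obstacle: the whole proof is a finite case-check feeding into Theorem~\ref{quasisimple}.

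I do not expect a genuine ``hard part'' here --- the difficulty was already absorbed into Theorem~\ref{quasisimple} --- but the step most prone to error is correctly reading off $|\mathbf{Z}(\mathbf{G}^F)|$ for each family and confirming its only possible odd-prime divisors are exactly the primes singled out in (i) and (ii); I would be careful that for type $D_n$ one has $|\mathbf{Z}(\mathbf{G}^F)|$ dividing $4$ (hence a $2$-group, irrelevant to odd $\pi$), and that for type $A$ one genuinely needs $n\geq 3$ since $\gcd(2,q-\epsilon)$ is again a $2$-group.
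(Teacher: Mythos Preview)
Your approach is correct and essentially identical to the paper's: verify $|\mathbf{Z}(\mathbf{G}^F)|_\pi=1$ type by type (the paper cites \cite[Table 24.12]{MT}) and then invoke Theorem~\ref{quasisimple}. You are in fact more careful than the paper in flagging that the hypothesis ``$s$ divides $|\Syl_p(S)|$'' of Theorem~\ref{quasisimple} is not part of the statement of Theorem~\ref{prop:ABC} and must be borrowed from the ambient Theorem~\ref{teoB} setup.
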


\begin{proof}
Let $\mathbf{G}$ and $F$ be as in Theorem \ref{quasisimple}.
According to \cite[Table 24.12]{MT}, if we are not in one of the
stated situations, then $|\bZ(\mathbf{G}^F)|_\pi=1$. The result then
follows from Theorem \ref{quasisimple}.
\end{proof}

Next we prove Theorem \ref{teoB} for case (i) in Theorem
\ref{prop:ABC}.

\begin{pro}\label{prop:E6}
Let $S= E^\epsilon_{6}(q)$ with $(3,q)=1$ and $P \in \Syl_{3}(S)$.
Then $|P'|>3$. In particular, Theorem \ref{teoB} holds in the case
$S= E^\epsilon_{6}(q)$ and $3\in\pi$.
\end{pro}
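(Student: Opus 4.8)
The plan is to show that every Sylow $3$-subgroup $P$ of $S = E_6^\epsilon(q)$ (with $(3,q)=1$) satisfies $|P'| > 3$; by Proposition~\ref{reduc} (or rather by the structure of the hypotheses in Theorem~\ref{teoB}, which require $|P'| \le p$), this immediately rules out $S = E_6^\epsilon(q)$ with $3 \in \pi$ as a potential source of a counterexample, so Theorem~\ref{teoB} holds vacuously in this case. The first step is to locate a Sylow $3$-subgroup explicitly. The prime $3$ divides $q - \epsilon$, $q^2+q+1$ (when $\epsilon=+$), $q^2-q+1$ (when $\epsilon=-$), depending on the congruence of $q$ modulo $3$; but in all cases $3 \mid |E_6^\epsilon(q)|$ since $(3,q)=1$, and one checks the $3$-part of $|E_6^\epsilon(q)| = q^{36}\prod(\cdots)$ is fairly large. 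I would split into the two cases according to whether $3 \mid q - \epsilon$ or $3 \mid q + \epsilon$, i.e.\ the order of $q$ modulo $3$.

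The key structural input is that $\mathbf{G}$ of type $E_6$ contains subsystem subgroups of type $A_2 \times A_2 \times A_2$ (the $\tfrac{1}{3}(E_6)$ subsystem) and of type $A_5 A_1$; correspondingly $\mathbf{G}^F$ contains subgroups closely related to $\SL^\epsilon(3,q)^3$ (up to central identifications) or to $\SL^\epsilon(6,q) \circ \SL(2,q)$. In the case $3 \mid q - \epsilon$, a Sylow $3$-subgroup of $\SL^\epsilon(3,q)$ is a nonabelian group of order $3^{1+\cdots}$ whose derived subgroup has order a power of $3$ strictly bigger than $1$; more to the point, a Sylow $3$-subgroup $P_0$ of $\SL^\epsilon(3,q)$ already has $|P_0'| \ge 3$, and taking three commuting copies (inside the $A_2^3$ subgroup) and adjoining the graph-type automorphism of order $3$ permuting the factors produces a subgroup $H \le P$ with $|H'| \ge 3^2 > 3$. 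When $3 \nmid q-\epsilon$ but $3 \mid q^2 + \epsilon q + 1$, the Sylow $3$-subgroup of $\SL^\epsilon(3,q)$ is cyclic, but one still finds a wreath-type configuration: the torus of order $(q^2+\epsilon q+1)$ has its Weyl-group normalizer contributing a $C_3$, and inside $E_6$ there is room to build $C_3 \wr C_3$ (as in Lemma~\ref{sylalt}), giving a nonabelian section with derived subgroup of order $\ge 3$, and then a further factor pushing $|P'|$ past $3$. Concretely I expect to exhibit in every case a subgroup $H \le P$ isomorphic to (a central quotient of) $C_3 \wr C_3 \wr C_3$ or to $(C_3 \wr C_3) \times C_3$-type object, for which $|H'| \ge 9$.

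The main obstacle will be the bookkeeping with the center and the isogeny: $S$ is $\mathbf{G}^F/\mathbf{Z}(\mathbf{G}^F)$ with $|\mathbf{Z}(\mathbf{G}^F)|$ possibly divisible by $3$ (this is exactly case (i) of Theorem~\ref{prop:ABC}), so I must be careful that passing to the simple quotient does not collapse the derived subgroup of the relevant $3$-subgroup. Since $|\mathbf{Z}(\mathbf{G}^F)|_3 \le 3$, quotienting can reduce $|P'|$ by a factor of at most $3$, so it suffices to produce a $3$-subgroup of $\mathbf{G}^F$ with $|H'| \ge 3^2$ lying over a subgroup of $S$; I will arrange the wreath-product configuration to have derived subgroup of order at least $9$ precisely to absorb this. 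An alternative, cleaner route that I would pursue if the explicit construction gets messy: invoke the general fact that a Sylow $3$-subgroup of a group of Lie type of rank $\ge$ some bound, in non-defining characteristic, is never of the restricted form ``$P/\mathbf{Z}(P)$ abelian with $|P'|\le 3$'' unless the $3$-local structure is very small — and rank $6$ with $3 \mid |S|$ is far above that threshold. Either way, the conclusion $|P'| > 3$ means the hypotheses of Theorem~\ref{teoB} cannot be met for $E_6^\epsilon(q)$ with $3 \in \pi$, so there is nothing to prove for $\dd_\pi(S)$ and Theorem~\ref{teoB} holds for these groups.
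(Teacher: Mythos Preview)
Your plan is workable in spirit but takes a substantially different and more laborious route than the paper. You propose to split on the residue of $q$ modulo $3$, locate subsystem subgroups of type $A_2^3$ (or $A_5A_1$), and assemble wreath-type $3$-subgroups $H$ with $|H'|\ge 9$ by hand, then worry separately about what the central quotient $\mathbf{G}^F\to S$ does to $H'$. The paper bypasses all of this with a single uniform observation: by \cite[Theorem~25.17]{MT} a Sylow $3$-subgroup of $\mathbf{G}^F$ lies in $\mathbf{N}_{\mathbf{G}^F}(\mathbf{T})$ for a suitable $F$-stable maximal torus $\mathbf{T}$, so a Sylow $3$-subgroup of $S$ surjects onto a Sylow $3$-subgroup of the Weyl group $\mathbf{N}_{\mathbf{G}^F}(\mathbf{T})/\mathbf{T}^F\cong \mathrm{SO}(5,3)$. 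One then checks directly that Sylow $3$-subgroups of $\mathrm{SO}(5,3)$ have derived subgroup of order $9$, whence $|P'|\ge 9$. No case split, no subsystem bookkeeping, and the centre issue evaporates because $\bZ(\mathbf{G}^F)\le \mathbf{T}^F$ is already killed in the quotient.

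One concrete slip in your outline: the case ``$3\nmid q-\epsilon$ but $3\mid q^2+\epsilon q+1$'' is empty, since $q^2+\epsilon q+1\equiv 0\pmod 3$ holds exactly when $q\equiv \epsilon\pmod 3$. The genuine complementary case is $3\mid q+\epsilon$, where the Sylow $3$-subgroup of $\SL^\epsilon(3,q)$ is cyclic and your $A_2^3$ picture needs modification (you would want a torus with several factors of $(q+\epsilon)$ and a permuting element from the Weyl group). This is fixable, but it illustrates why the Weyl-group-quotient argument is the cleaner path: it is insensitive to which cyclotomic factor carries the $3$-part.
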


\begin{proof}
Let $\textbf{G}$ be a simple algebraic group of simply connected
type and $F:\textbf{G} \rightarrow \textbf{G}$ a Frobenius map such
that $S=\textbf{G}^{F}/\bZ(\textbf{G}^{F})$. By \cite[Theorem
25.17]{MT}, we know that every Sylow $3$-subgroup of $\mathbf{G}^F$
lies in $\mathbf{N}_{\mathbf{G}^F}(\textbf{T})$ for some maximal
$F$-stable torus $\textbf{T}$ of $\mathbf{G}$. Therefore Sylow
$3$-subgroups of
$\mathbf{N}_{\mathbf{G}^F}(\textbf{T})/\textbf{T}^F=\mathrm{SO}(5,3)$
(the Weyl group of $E_6$) are homomorphic images of Sylow
$3$-subgroups of $S=\mathbf{G}^F/\bZ(\mathbf{G}^F)$. Since the size
of the derived subgroup of Sylow $3$-subgroups of $\mathrm{SO}(5,3)$
is $9$, we deduce that $|P'|>3$.
\end{proof}

For the rest of this section, we will prove Theorem \ref{teoB} for
case (ii) in Theorem \ref{prop:ABC}.

\begin{lem}\label{lem:PSL-PSU}
Let $p$ be an odd prime and $S=\PSL^\epsilon(n,q)$. Assume that $p$
divides  $\gcd(n,q-\epsilon)$ and Sylow $p$-subgroups of $S$ are
abelian. Then $n=p=3$. Furthermore, $q-\epsilon$ is divisible by $3$
but not $9$.
\end{lem}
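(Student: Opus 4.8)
The plan is to prove the contrapositive: whenever $p$ divides $\gcd(n,q-\epsilon)$ and we are \emph{not} in the situation $n=p=3$ with $9\nmid q-\epsilon$, the Sylow $p$-subgroups of $S=\PSL^\epsilon(n,q)$ are non-abelian. Write $Z:=\bZ(\SL^\epsilon(n,q))$, so that $S=\SL^\epsilon(n,q)/Z$; it then suffices to exhibit a $p$-subgroup $R\le\SL^\epsilon(n,q)$ with $[R,R]\not\subseteq Z$, because $RZ/Z$ is then a non-abelian $p$-subgroup of $S$. Observe $p\mid n$ and $p\mid q-\epsilon$, and fix $d\ge1$ with $p^{d}\mid q-\epsilon$ and $p^{d+1}\nmid q-\epsilon$.

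The main local ingredient I would establish first is a computation inside $\SL^\epsilon(p,q)$. Let $C$ be the cyclic subgroup of order $p^{d}$ in $\FF_{q}^{\times}$ (if $\epsilon=1$) or in $\{t\in\FF_{q^{2}}^{\times}:t^{q+1}=1\}$ (if $\epsilon=-1$), let $T_{0}=\{\diag(t_{1},\dots,t_{p}):t_{i}\in C\}\le\GL^\epsilon(p,q)$, and let $\sigma\in\GL^\epsilon(p,q)$ be a $p$-cycle permutation matrix; since $p$ is odd, $\det\sigma=1$ and $\sigma$ preserves the standard Hermitian form when $\epsilon=-1$, so $\sigma\in\SL^\epsilon(p,q)$. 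Put $B_{0}:=T_{0}\cap\SL^\epsilon(p,q)\cong C^{\,p-1}$ and $R_{1}:=B_{0}\rtimes\langle\sigma\rangle$, a $p$-subgroup of $\SL^\epsilon(p,q)$. Because $B_{0}$ is abelian, $b\mapsto[b,\sigma]=b^{-1}b^{\sigma}$ is a homomorphism $B_{0}\to B_{0}$ whose kernel $B_{0}^{\langle\sigma\rangle}$ is exactly the group of scalar matrices in $B_{0}$, which is $\bZ(\SL^\epsilon(p,q))$, of order $\gcd(p,q-\epsilon)=p$. Hence $[R_{1},R_{1}]=[B_{0},\sigma]$ has order $|B_{0}|/p=p^{\,d(p-1)-1}$.

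Then I would split into two cases. If $n>p$, so $n\ge2p$ as $p\mid n$, I embed $\SL^\epsilon(p,q)\times\SL^\epsilon(p,q)$ block-diagonally into $\SL^\epsilon(n,q)$ (padding with $I_{n-2p}$; this respects the form when $\epsilon=-1$) and take $R:=R_{1}\times R_{1}$. Since $d(p-1)-1\ge1$, the group $[R,R]=[R_{1},R_{1}]\times[R_{1},R_{1}]$ is non-trivial, and for $1\ne x\in[R_{1},R_{1}]$ the element $\diag(x,I_{n-p})$ of $[R,R]$ is not scalar, so $[R,R]\not\subseteq Z$; this contradicts the hypothesis. Hence $n=p$, in which case $Z=\bZ(\SL^\epsilon(p,q))=B_{0}^{\langle\sigma\rangle}$ has order $p$. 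If $d(p-1)-1>1$, then $[R_{1},R_{1}]$ has order larger than $|Z|$, so $[R_{1},R_{1}]\not\subseteq Z$, and again the Sylow $p$-subgroups of $S$ are non-abelian. Therefore $d(p-1)\le2$, which, since $p\ge3$ and $d\ge1$, forces $p=3$ and $d=1$; that is, $n=p=3$ and $3$ divides $q-\epsilon$ but $9$ does not.

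I expect the delicate point to be the $n=p$ case, where tightness matters: one needs both the exact order $p^{d(p-1)-1}$ of $[B_{0},\sigma]$ and the identification $B_{0}^{\langle\sigma\rangle}=\bZ(\SL^\epsilon(p,q))$, since it is precisely the borderline inequality $p^{d(p-1)-1}\le p$ that singles out $p=3$, $d=1$. The remaining points — treating $n\ge2p$ and carrying the unitary case $\epsilon=-1$ uniformly (checking that $T_{0}$ and $\sigma$ lie in $\GU(p,q)$ and that the block-diagonal embedding respects the Hermitian form) — should be routine.
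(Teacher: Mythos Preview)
Your argument is correct and takes a genuinely different route from the paper's proof. The paper first invokes \cite[Lemma~2.8]{KS21} to dispose of all primes $p\ge 5$ at once, thereby reducing to $p=3$; it then rules out $n>3$ and $9\mid q-\epsilon$ separately by writing down explicit $3$-elements (namely $\diag(I_{n-2},w,w^{-1})$ and $\diag(\lambda,\lambda^{3},\lambda^{5})$), computing their centralizers in $\GL^{\epsilon}$, and checking that the resulting class sizes force a Sylow $3$-subgroup of $S$ to be non-abelian. Your approach instead builds a single explicit $p$-subgroup $R_{1}=B_{0}\rtimes\langle\sigma\rangle\le\SL^{\epsilon}(p,q)$, computes $|[R_{1},R_{1}]|=p^{d(p-1)-1}$ exactly, and then argues by cases on $n>p$ versus $n=p$. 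This is more self-contained (no appeal to \cite{KS21}) and handles all odd $p$ uniformly; the borderline inequality $d(p-1)-1\le 1$ simultaneously pins down both $p=3$ and $d=1$. The paper's approach, by contrast, stays closer to the conjugacy-class viewpoint that pervades the article and re-uses the same centralizer machinery appearing in the proof of Theorem~\ref{Z(P)abelian}.
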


\begin{proof}
It is argued in Lemma 2.8 of \cite{KS21} that if Sylow $p$-subgroups
of $S$ are abelian and $p\geq 5$ then $p$ cannot divide
$|\bZ(\SL^\epsilon(n,q))|$. Therefore our hypotheses imply that
$p=3$.

We first prove that $n=3$. The condition $p=3$ divides
$\gcd(n,q-\epsilon)$, implies that $n\geq 3$. Assume by
contradiction that $n>3$. Let $w$ be the (unique) element of order $3$
of $\mathbb{F}^{\times}_{q^2}$, and consider the element
$g:=\diag(I_{n-2},w,w^{-1})$. We have
\[
\mathbf{C}_{\GL^\epsilon(n,q)}(g)=\GL^\epsilon(n-2,q)\times
\GL^\epsilon(1,q)^2,
\]
and so
\[
|\GL^\epsilon(n,q):\mathbf{C}_{\GL^\epsilon(n,q)}(g)|=
q^{2n-1}\frac{(q^n-\epsilon^n)(q^{n-1}-\epsilon^{n-1})}{(q-\epsilon)^2}.
\]
Since $3$ divides $\gcd(n,q-\epsilon)$, we have that  $3$ must
divide $ |\GL^\epsilon(n,q):\mathbf{C}_{\GL^\epsilon(n,q)}(g)|$. In
fact, we also have $3$ divides
$|\SL^\epsilon(n,q):\mathbf{C}_{\SL^\epsilon(n,q)}(g)|$. On the
other hand, as $1$ is the only eigenvalue of $g$ with multiplicity
larger than 1 (recall that $n>3$), it is easy to see that
$\mathbf{C}_{\SL^\epsilon(n,q)}(g)$ is the full pre-image of
$\mathbf{C}_{\PSL^\epsilon(n,q)}(\overline{g})$ under the natural
projection from $\SL^\epsilon$ to $\PSL^\epsilon$, where
$\overline{g}$ is the image of $g$ in $\PSL^\epsilon(n,q)$. In
particular,
$|\SL^\epsilon(n,q):\mathbf{C}_{\SL^\epsilon(n,q)}(g)|=|\PSL^\epsilon(n,q):\mathbf{C}_{\PSL^\epsilon(n,q)}(\overline{g})|$,
and hence $3 $ divides
$|\PSL^\epsilon(n,q):\mathbf{C}_{\PSL^\epsilon(n,q)}(\overline{g})|$,
implying that Sylow $3$-subgroups of $S=\PSL^\epsilon(n,q)$ are not
abelian. We have shown that $n=3$.

Finally, assume that $9$ divides $q-\epsilon$. Let $\lambda$ be the
element of order 9 in $\mathbb{F}^{\times}_{q^2}$ and consider
$h:=\diag(\lambda,\lambda^3,\lambda^5)\in \SL^\epsilon(3,q)$, also
of order 9. We then have
$\mathbf{C}_{\GL^\epsilon(3,q)}(h)=\GL^\epsilon(1,q)^3$, so that
$|\mathbf{C}_{\SL^\epsilon(3,q)}(h)|=(q-\epsilon)^2$. Moreover, as
$\{\lambda,\lambda^3,\lambda^5\}=\{a\lambda,a\lambda^3,a\lambda^5\}$
if and only if $a=1$, $\mathbf{C}_{\SL^\epsilon(3,q)}({h})$ is the
full pre-image of $\mathbf{C}_{\PSL^\epsilon(3,q)}(\overline{h})$.
We deduce that
$|\mathbf{C}_{\PSL^\epsilon(3,q)}(\overline{h})|=(q-\epsilon)^2/3$.
This is smaller than the $3$-part of $|\PSL^\epsilon(3,q)|$, and thus
Sylow $3$-subgroups of $\PSL^\epsilon(3,q)$ are not abelian,
violating the hypothesis. So $9$ cannot divide $ q-\epsilon$, as
stated.
\end{proof}

\begin{thm}\label{Z(P)abelian}
Let $p$ be an odd prime, $n\geq 4$, and $(n,p)\neq (6,3)$. Let
$G:=\SL^\epsilon(n,q)$ defined in characteristic not equal to $p$,
$S:=G/\bZ(G)=\PSL^\epsilon(n,q)$, and $P\in\Syl_p(S)$. Suppose that
$P/\bZ(P)$ is abelian. Then $p$ does not divide $|\bZ(G)|$.
\end{thm}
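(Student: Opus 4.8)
I would prove the contrapositive: assume $p\mid|\bZ(G)|=\gcd(n,q-\epsilon)$, so that in particular $p\mid n$ and $p\mid q-\epsilon$, and deduce that $P/\bZ(P)$ is non-abelian. The basic reduction is the following. Since $G=\SL^\epsilon(n,q)\to S$ is surjective with central kernel $\bZ(G)$, it carries any Sylow $p$-subgroup of $G$ onto one of $S$; hence it suffices to produce a $p$-subgroup $H_0\le G$ together with elements $x,y,z\in H_0$ for which $[[x,y],z]$ is \emph{not} a scalar matrix. Indeed, choosing $P\in\Syl_p(S)$ to contain the image of $H_0$, the element $[[x,y],z]$ avoids $\bZ(G)=\ker(G\to S)$, so $[[\bar x,\bar y],\bar z]\neq 1$ in $P$; thus $[\bar x,\bar y]\in P'$ fails to centralise $\bar z\in P$, so $P'\not\le\bZ(P)$, contrary to the hypothesis.

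It remains to construct such $x,y,z$, and here I would split on whether $n\ge p^2$. If $n\ge p^2$, take $H_0$ to be a copy of $C_p\wr C_p$ acting by permutation matrices on the first $p^2$ coordinates; since $p$ is odd, all these permutations are even, so $H_0\le\SL^\epsilon(n,q)$ --- in the unitary case one realises the Hermitian form with Gram matrix $I_n$, which makes every permutation matrix unitary. As $\sigma-1$ is nilpotent of index exactly $p$ on the regular module $C_p^{\,p}\cong\mathbb{F}_p[\sigma]/\bigl((\sigma-1)^p\bigr)$, one has $\gamma_3(C_p\wr C_p)=(\sigma-1)^2C_p^{\,p}\neq 1$, so $C_p\wr C_p$ has nilpotency class at least $3$ and hence contains a triple $x,y,z$ with $[[x,y],z]\neq 1$; as permutation matrices, $[[x,y],z]$ is then a nontrivial permutation matrix, which is not scalar. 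If instead $n<p^2$, then $p\mid n$ forces $n=pm$ with $1\le m\le p-1$, and now the hypotheses $n\ge 4$ and $(n,p)\neq(6,3)$ force $p\ge 5$ --- this is precisely where they enter, since for $p=3$ they instead force $n\ge 9=p^2$. In this range I would take $\sigma$ to be the permutation matrix of the $p$-cycle $(1\,2\,\cdots\,p)$ and $a=\diag(\zeta,\zeta^{-1},1,\dots,1)$, where $\zeta$ has order $p$ (in $\mathbb{F}_q^\times$ if $\epsilon=+$, in the order-$(q+1)$ subgroup of $\mathbb{F}_{q^2}^\times$ if $\epsilon=-$); both lie in $\SL^\epsilon(n,q)$, the group $H_0:=\langle a,\sigma\rangle$ is a $p$-group (the $\sigma$-conjugates of $a$ are diagonal with entries powers of $\zeta$), and $[[a,\sigma],\sigma]$ is again diagonal, with exponent vector $(\rho-1)^2\vec c$, where $\vec c$ is the exponent vector of $a$ on the first $p$ coordinates and $\rho$ is the cyclic shift induced by conjugation by $\sigma$. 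Identifying $(\mathbb{Z}/p)^p$ with $\mathbb{F}_p[y]/(y^p)$ via $y=\rho-1$ --- so that the constant (scalar) vectors are exactly the socle $\mathbb{F}_p\,y^{p-1}$ while $\vec c$ corresponds to $\pm y$ --- the exponent vector of $[[a,\sigma],\sigma]$ corresponds to a nonzero multiple of $y^3$, which is not in the socle precisely because $3<p-1$. So $[[a,\sigma],\sigma]$ is non-scalar, and the reduction applies.

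The argument is essentially constructive, so the obstacles are organisational rather than conceptual. The first is treating the linear and unitary cases uniformly, which is handled by using the Hermitian form with Gram matrix $I_n$: then all permutation matrices are automatically unitary, and the diagonal element only needs a norm-one $p$th root of unity, which exists since $p\mid q+1$. The second is carefully tracking the passage from ``$[[x,y],z]$ is not scalar in $\SL^\epsilon(n,q)$'' to ``$P/\bZ(P)$ is non-abelian in $S$'', which rests only on $\ker(G\to S)$ being central. The last point to verify is that the hypotheses $n\ge 4$ and $(n,p)\neq(6,3)$ leave no uncovered configuration; they do not, since for $p=3$ they force $n\ge 9=p^2$ and for $p\ge5$ the two cases $n\ge p^2$ and $p\le n<p^2$ exhaust all $n$ divisible by $p$. (The excluded cases are genuine: for instance $\PSL(3,q)$ with $3\,\|\,q-\epsilon$ has abelian Sylow $3$-subgroups yet $3\mid|\bZ(\SL(3,q))|$.)
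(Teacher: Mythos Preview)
Your argument is correct. The overall framing---prove the contrapositive by exhibiting, inside some $p$-subgroup of $G$, a commutator that lands in $P'\setminus\bZ(P)$ in $S$---matches the paper's, and in your second case you work with the very same pair $a=\diag(\zeta,\zeta^{-1},I_{n-2})$ and $\sigma=(1\,2\,\cdots\,p)$ that the paper calls $x$ and $y$. The difference lies in how non-centrality of the commutator is certified. The paper forms $s=[x,y]$, embeds $x,y$ in an explicit Sylow $p$-subgroup $\widehat{P}$, computes $\mathbf{C}_{\GL^\epsilon(n,q)}(s)$ as a product of general linear groups, and checks that $p$ divides $|S:\mathbf{C}_S(s\bZ(G))|$; this is presented as a single uniform argument with no split on the size of $n$. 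You instead iterate one more commutator, showing that $[[a,\sigma],\sigma]$ is a non-scalar diagonal matrix via the identification $(\mathbb{Z}/p)^p\cong\mathbb{F}_p[y]/(y^p)$; this is more combinatorial and avoids any centraliser or $p$-adic valuation computation, at the cost of forcing $p\ge5$ and hence requiring your separate $C_p\wr C_p$ argument when $n\ge p^2$. That extra case is not wasted effort: for $p=3$ the paper's displayed quotient has $3$-adic valuation $v_3(n)-1$, which vanishes whenever $3\,\|\,n$ (e.g.\ $n=12$), so the paper's divisibility claim as literally stated fails there, whereas your permutation-matrix realisation of $C_3\wr C_3$ handles all such $n\ge9$ cleanly.
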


\begin{proof} Assume by contradiction that $p\mid
|\bZ(G)|=\gcd(n,q-\epsilon)$. Lemma \ref{lem:PSL-PSU} already shows
that $P$ is non-abelian, but we need to work harder to achieve that
$P/\bZ(P)$ is non-abelian. Let $\lambda \in \FF_{q^2}^\times$ be an
element of order $p$ and consider the $p$-element
\[
x:=\diag(\lambda,\lambda^{-1},I_{n-2})\in G.
\]
Let $V=\FF_q^n$, respectively $\FF_{q^2}^n$, denote the natural
$G$-module for $\epsilon=+$, respectively $\epsilon=-$. Fix a basis
$B=\{v_1,v_2,...,v_n\}$ of $V$, and consider the permutation $y$ on $B$ defined by
\[
y:=\{v_1\mapsto v_2,v_2\mapsto v_3,...,v_{p-1}\mapsto v_p,
v_p\mapsto v_1,v_i\mapsto v_i \text{ for } p<i\leq n\},
\]
which is well-defined as $p\leq n$. Note that, as $p>2$, we have
$y\in G$ and $\mathrm{ord}(y)=p$. Direct calculation shows that
$$[x,y]=\diag(\lambda^{-1},\lambda^2,\lambda^{-1},I_{n-3})=:s.$$

Suppose that the $p$-part of $q-\epsilon$ is $p^a$ and let $C$ be
the (unique) cyclic subgroup of order $p^a$ of $\FF_{q^2}^\times$.
As $y$ permutes the diagonal matrices in $G$ with diagonal entries
in $C$, one can form the corresponding semidirect product that is then a
$p$-group. It follows that $x$ and $y$ both belong to a Sylow
$p$-subgroup, say $\widehat{{P}}$, of $G$. We deduce that
$s=[x,y]\in \widehat{{P}}'$, which implies that $s\bZ(G)\in P'$,
where $P\in\Syl_p(S)$ is the image of $\widehat{{P}}$ under the
natural projection $\SL^\epsilon\rightarrow \PSL^\epsilon$.

We will show that $s\bZ(G)$ does not belong to $\bZ(P)$, which is
enough to conclude that $P/\bZ(P)$ is not abelian.

Let $\widetilde{G}:=\GL^\epsilon(n,q)$. We have
\begin{equation*}
\mathbf{C}_{\widetilde{G}}(s) = \begin{cases}
\GL^\epsilon (3,q)\times \GL^\epsilon(n-3,q) &\text{ if } p=3\\
\GL^\epsilon(1,q)\times \GL^\epsilon(2,q)\times \GL^\epsilon(n-3,q)
&\text{ if } p>3.
\end{cases}
\end{equation*}
It is easy to see that $|S:\mathbf{C}_S(s\bZ(G))|=|G:\mathbf{C}_G(s)
|=|\widetilde{G}:\mathbf{C}_{\widetilde{G}}(s)|$. Hence,
\begin{equation*}
|S:\mathbf{C}_S(s\bZ(G))| = \begin{cases}
\dfrac{|\GL^\epsilon(n,q)|}{|\GL^\epsilon(3,q)| |\GL^\epsilon(n-3,q)|} &\text{ if } p=3\\
\dfrac{|\GL^\epsilon(n,q)|}{|\GL^\epsilon(1,q)|
|\GL^\epsilon(2,q)||\GL^\epsilon(n-3,q)|} &\text{ if } p>3.
\end{cases}
\end{equation*}
It follows that, if $\ell$ is the defining characteristic of $S$,
then
\begin{equation*}
|S:\mathbf{C}_S(s\bZ(G))|_{\ell'} = \begin{cases}
\dfrac{(q^n-\epsilon^n)(q^{n-1}-\epsilon^{n-1})(q^{n-2}-\epsilon^{n-2})}{(q-\epsilon)(q^2-1)(q^3-\epsilon^3)}
&\text{ if } p=3\\
\dfrac{(q^n-\epsilon^n)(q^{n-1}-\epsilon^{n-1})(q^{n-2}-\epsilon^{n-2})}{(q-\epsilon)^2(q^2-1)}
&\text{ if } p>3.
\end{cases}
\end{equation*}
Using the condition $p\mid \gcd(n,q-\epsilon)$ and the assumption
$(n,p)\neq (6,3)$, we see that this is divisible by $p$. It follows
that $s\bZ(G)$ does not belong to $\bZ(P)$, and this finishes the
proof.
\end{proof}

\begin{lem}\label{casep3}
Let $S=\PSL^\epsilon(n,q)$ with $n\geq 4$. If $3$ divides
$q-\epsilon$, then $d_3(S)\leq \frac{1}{3}$. In particular, if $3$
divides $q-\epsilon$ and $3 \in \pi$, then $d_{\pi}(S)\leq
\frac{1}{3}$.
\end{lem}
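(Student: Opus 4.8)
The plan is to push the computation down to the simply connected group $G:=\SL^{\epsilon}(n,q)$ — where $3$-elements are semisimple and counted by the torus formula from the proof of Theorem~\ref{quasisimple} — and then to conclude by a comparison of $3$-adic valuations that uses the hypothesis $n\geq 4$. Since $3\mid q-\epsilon$, the prime $3$ is not the defining characteristic, so every $3$-element of $S$ is semisimple. Write $Z:=\bZ(G)$, cyclic of order $e:=\gcd(n,q-\epsilon)$, so that $S=G/Z$. First I would check that every $3$-element of $S$ is the image of a $3$-element of $G$: the full preimage in $G$ of a cyclic $3$-subgroup $\langle\bar x\rangle\leq S$ is an extension of the $3$-group $\langle\bar x\rangle$ by $Z$, so one of its Sylow $3$-subgroups maps onto $\langle\bar x\rangle$. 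Hence the natural map from the $3$-classes of $G$ onto the $3$-classes of $S$ is surjective, giving $k_{3}(S)\leq k_{3}(G)$; and since $|S|_{3}=|G|_{3}/e_{3}$ with $e_{3}:=\gcd(n,q-\epsilon)_{3}$, this yields $d_{3}(S)\leq e_{3}\cdot d_{3}(G)$. It now suffices to bound $d_{3}(G)$.

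For the bound on $d_{3}(G)$ I would use the counting formula derived in the proof of Theorem~\ref{quasisimple} (it rests only on \cite[Proposition~3.7.4]{Carter85} and \cite[Theorem~3.15]{GGG} and is valid for any set of primes avoiding the defining characteristic): applied to the single prime $3$ it gives $k_{3}(G)=\frac{1}{|W|}\sum_{w\in W}|\mathbf{T}^{w^{-1}F}|_{3}$ with $W=S_{n}$. The $F$-stable maximal tori of $G$ are indexed by the partitions $\lambda=(\lambda_{1},\dots,\lambda_{r})$ of $n$, with order $\frac{1}{q-\epsilon}\prod_{i}(q^{\lambda_{i}}-\epsilon^{\lambda_{i}})$. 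Writing $3^{a}:=(q-\epsilon)_{3}$ (so $a\geq 1$), lifting the exponent gives $(q^{m}-\epsilon^{m})_{3}=3^{a+v_{3}(m)}$ for every $m\geq 1$, hence $|\mathbf{T}_{\lambda}|_{3}=3^{\,a(r-1)+\sum_{i}v_{3}(\lambda_{i})}$; using the elementary bound $v_{3}(m)\leq m-1\leq a(m-1)$ this is at most $3^{a(n-1)}$ for every $\lambda$. On the other hand $|G|_{3}=|\GL^{\epsilon}(n,q)|_{3}/(q-\epsilon)_{3}=3^{\,a(n-1)+v_{3}(n!)}$. Since $k_{3}(G)$ is a weighted average of the numbers $|\mathbf{T}_{\lambda}|_{3}$, we obtain $d_{3}(G)\leq 3^{-v_{3}(n!)}$.

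Combining the two steps, $d_{3}(S)\leq e_{3}\cdot 3^{-v_{3}(n!)}\leq 3^{\,v_{3}(n)-v_{3}(n!)}$, and here $n\geq 4$ enters: $v_{3}(n!)-v_{3}(n)=v_{3}((n-1)!)\geq 1$, so $d_{3}(S)\leq 3^{-1}=\frac{1}{3}$, as claimed. The ``in particular'' statement is then immediate from Lemma~\ref{dpi}(i), since $3\in\pi$ forces $d_{\pi}(S)\leq d_{3}(S)\leq\frac{1}{3}$. I do not expect a genuine obstacle: the only delicate points are the uniform treatment of the linear and unitary cases in the torus-order and lifting-the-exponent computations (verifying $(q^{m}-\epsilon^{m})_{3}=3^{a+v_{3}(m)}$ in both cases when $3\mid q-\epsilon$), and confirming that the counting formula of Theorem~\ref{quasisimple} is legitimately specialized to the single prime $3$.
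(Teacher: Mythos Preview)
Your argument is correct, and it takes a genuinely different route from the paper's. The paper argues by contradiction: if $d_3(S)>1/3$ then $\Pro(P)>1/3$ for $P\in\Syl_3(S)$, so $|P'|\leq 3$ by Theorem~\ref{firstpart}; it then reuses the explicit commutator construction from the proof of Theorem~\ref{Z(P)abelian} (together with a shifted copy of it) to produce two elements $s\bZ(G),\,t\bZ(G)\in P'$ whose images generate a group of order $9$, contradicting $|P'|\leq 3$. Your approach instead bounds $d_3(G)$ directly via the torus formula and a lifting-the-exponent computation, and then passes from $G$ to $S$ at the cost of the factor $e_3$, which the hypothesis $n\geq 4$ absorbs through $v_3((n-1)!)\geq 1$. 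The paper's proof is shorter because it leverages an existing construction, but it depends on the proof (not just the statement) of Theorem~\ref{Z(P)abelian}; your proof is self-contained and more quantitative, and in fact yields the sharper bound $d_3(S)\leq 3^{\,v_3(n)-v_3(n!)}$, which becomes much smaller than $1/3$ as $n$ grows. The two ``delicate points'' you flag are both fine: the identity $(q^{m}-\epsilon^{m})_{3}=3^{a+v_3(m)}$ holds uniformly for $\epsilon=\pm 1$ once $3\mid q-\epsilon$, and the counting formula from the proof of Theorem~\ref{quasisimple} is stated for an arbitrary set of non-defining primes, so specializing to $\pi=\{3\}$ is immediate.
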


\begin{proof}
Assume, to the contrary, that $d_3(S)>1/3$. Then $d_3(P)>1/3$, and
thus $|P'|\leq 3$ by Theorem \ref{firstpart}. The proof of Theorem
\ref{Z(P)abelian} shows that $P'$ contains two elements $s\bZ(G)$
and $t\bZ(G)$, where
$s=\diag(\lambda^{-1},\lambda^2,\lambda^{-1},I_{n-3})$ and
$t=\diag(1,\lambda^{-1},\lambda^2,\lambda^{-1},I_{n-4})$. Obviously
these elements generate a group of order greater than $3$, a
contradiction.
\end{proof}

\begin{lem}\label{lem:PSL3}
Let $S=\PSL^{\epsilon}(3,q)$ and $\pi$ a set of odd primes with
$3\in \pi$. Then $\dd_{\pi}(S)\leq \frac{1}{3}$.
\end{lem}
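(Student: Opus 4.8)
The plan is to reduce to the case $3\mid q-\epsilon$ and then bound $\dd_{3}(S)$ via an explicit count of classes of $3$-elements. Write $p=3$ for the smallest prime of $\pi$; recalling the remark following Theorem~\ref{teoB} (and using Theorem~\ref{thm:defining char} to dispose of the case in which $\pi$ contains the defining characteristic of $S$), we may assume $\pi=\{3,s\}$ with $s>3$ a prime dividing $|S|$ and not equal to the defining characteristic. If $3\nmid q-\epsilon$, then $\gcd(3,q-\epsilon)=1$, so $\bZ(\SL^{\epsilon}(3,q))=1$ and $S=\SL^{\epsilon}(3,q)$; since $S$ is not of type $E_{6}^{\epsilon}$ and $\gcd(3,q-\epsilon)_{\pi}=1$, we are in neither exceptional case of Theorem~\ref{prop:ABC}, which therefore yields $\dd_{\pi}(S)\le 1/3$. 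So assume henceforth that $3\mid q-\epsilon$; in particular the defining characteristic of $S$ is not $3$. By Lemma~\ref{dpi}(i), $\dd_{\pi}(S)\le\dd_{3}(S)$, so it suffices to prove $\dd_{3}(S)\le 1/3$, i.e.\ $k_{3}(S)\le |S|_{3}/3$. Writing $(q-\epsilon)_{3}=3^{k}$ (so $k\ge1$), a routine computation of $3$-parts gives $|S|_{3}=3^{2k}$, and the goal becomes
\[
k_{3}(S)\le 3^{2k-1}.
\]

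To count the $3$-elements, set $G:=\SL^{\epsilon}(3,q)$, so that $S=G/\bZ(G)$ with $\bZ(G)=\langle\omega I\rangle$, $\omega$ of order $3$. Since the characteristic is not $3$, every $3$-element of $G$ is semisimple, and the one delicate step is the claim that each such element is $G$-conjugate into the maximally split maximal torus $T_{0}$ of $G$ --- equivalently, that all three of its eigenvalues have order dividing $3^{k}$; here $T_{0}$ is the diagonal torus when $\epsilon=+$ and, when $\epsilon=-$, the diagonal torus whose entries lie in the norm-one subgroup of $\FF_{q^{2}}^{\times}$, and its $3$-part is $C_{3^{k}}\times C_{3^{k}}$. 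To prove the claim I would inspect the possible Galois-orbit shapes of the eigenvalue multiset of a semisimple $3$-element of $G$, using that the $3$-part of $q^{m}-\epsilon^{m}$ equals $3^{k}$ for $m\in\{1,2\}$ and $3^{k+1}$ for $m\in\{3,6\}$, together with the fact that the $3$-part of $1+q+q^{2}$ (for $\epsilon=+$; the corresponding quantity for $\epsilon=-$) is exactly $3$, so that the determinant-one relation forces every orbit shape other than ``three eigenvalues of order dividing $3^{k}$'' into a contradiction. Granting the claim, a semisimple element conjugate into $T_{0}$ has centralizer meeting every determinant fibre, so its $\GL^{\epsilon}$-class is a single $G$-class; hence the $G$-classes of $3$-elements correspond bijectively to the size-$3$ multisets $\{a,b,c\}$ of a cyclic group $C$ of order $3^{k}$ with $abc=1$. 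By Burnside's lemma applied to the natural action of $\Sym(3)$ on ordered triples, the number $N$ of such multisets is
\[
N=\tfrac{1}{6}\bigl(3^{2k}+3\cdot 3^{k}+2\cdot 3\bigr).
\]
Descending to $S=G/\langle\omega I\rangle$, two such $G$-classes fuse exactly when their eigenvalue multisets differ by multiplication by $\omega$, and the unique fixed point of this order-$3$ action is $\{1,\omega,\omega^{2}\}$, so
\[
k_{3}(S)=1+\frac{N-1}{3}=\frac{3^{2k}+3^{k+1}+18}{18}.
\]
Since $3^{k+1}+18\le 5\cdot 3^{2k}$ for all $k\ge1$, this is at most $3^{2k-1}=|S|_{3}/3$, whence $\dd_{\pi}(S)\le\dd_{3}(S)\le 1/3$.

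The main obstacle is precisely the claim of the second paragraph, that every $3$-element of $\SL^{\epsilon}(3,q)$ is conjugate into $T_{0}$: this is where the exact $3$-adic valuations of the numbers $q^{m}-\epsilon^{m}$ must be combined with the determinant constraint, and in the unitary case a lifting-the-exponent computation is additionally required to handle the interplay of the Galois action with the conjugate-inverse symmetry of the eigenvalues. Everything after that is bookkeeping --- the Burnside count, the fusion of $G$-classes under the central $C_{3}$ (in particular pinning down the single exceptional orbit), and the final elementary inequality --- and the smallest case $k=1$, where $|S|_{3}=9$ and $k_{3}(S)=2$, is immediate.
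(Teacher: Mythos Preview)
Your argument is correct and arrives at the same numerical conclusion as the paper, but by a different route in the main case $3\mid q-\epsilon$. The paper simply cites the character tables in \cite{SF73} to read off $k_{3}(S)=(t^{2}+t+2)/2$ with $t=(q-\epsilon)_{3}/3$, and then bounds this by $2t^{2}$ against $|S|_{3}\ge 9t^{2}$ to get $\dd_{3}(S)\le 2/9<1/3$. You instead rederive the class count from scratch: first arguing that every $3$-element of $\SL^{\epsilon}(3,q)$ is conjugate into the split torus, then counting the resulting multisets by Burnside, and finally passing to $S$ by analyzing the $\bZ(G)$-action. Your formula $(3^{2k}+3^{k+1}+18)/18$ is exactly the paper's $(t^{2}+t+2)/2$, so the two agree; your version is more self-contained but substantially longer, and the torus claim you flag as the ``main obstacle'' is genuinely the nontrivial step (your sketch via the $3$-adic valuations of $q^{m}-\epsilon^{m}$ and the determinant constraint is the right idea and does go through in both the linear and unitary cases). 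The reduction in the case $3\nmid q-\epsilon$ is essentially identical in both proofs: the paper invokes Theorem~\ref{quasisimple} directly, you invoke Theorem~\ref{prop:ABC} (which is just Theorem~\ref{quasisimple} packaged with the centre computation), and both implicitly carry the ambient hypotheses of Theorem~\ref{teoB}.
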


\begin{proof}
If $3$ does not divide $q-\epsilon$, then the result follows by
Proposition \ref{quasisimple}. We therefore assume that $3$ divides
$q-\epsilon$. In particular, $3$ divides $q^2+\epsilon q+1$. Denote
$t:=\frac{(q-\epsilon)_3}{3}$. We have
$$|S|_{3}=\frac{((q-\epsilon)^2(q+\epsilon)(q^2+\epsilon q+1))_{3}}{3}\geq (q-\epsilon)_{3}^2=9t^{2}.$$
On the other hand, counting the number of conjugacy classes of
$3$-elements in $\PSL^{\epsilon}(3,q)$ (see for example \cite{SF73})
we have $k_3(S)={(t^2+t+2)}/{2}\leq 2 t^2.$ Therefore,
$$\dd_{\pi}(S)\leq \dd_{3}(S)=\frac{k_3(S)}{|S|_{3}}
\leq \frac{ 2 t^2}{ 9 t^2}<\frac{1}{3},$$ as wanted.
\end{proof}

\begin{pro}\label{prop:PSLPSU}
Theorem \ref{teoB} holds for $S=\PSL^{\epsilon}(n,q)$ with $n\geq 3$
and $\pi=\{p,s\}$ with $p<s$ be odd primes such that $q$ is not
divisible by neither $p$ nor $s$.
\end{pro}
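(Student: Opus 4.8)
The plan is to combine all the previous results on $\PSL^\epsilon(n,q)$ into a single argument, splitting according to whether $3\in\pi$ and according to the size of $n$. By Theorem \ref{prop:ABC}, we may assume that case (ii) of that theorem holds, i.e.\ $\gcd(n,q-\epsilon)_\pi\neq 1$; in particular some prime $r\in\pi$ divides $\gcd(n,q-\epsilon)$, and since $r$ is odd and at least $p$, we are not in a situation already covered. First I would dispose of the case $n=3$: here the hypotheses of Theorem \ref{teoB} for $\PSL^\epsilon(3,q)$ force $3\mid\gcd(3,q-\epsilon)$ whenever case (ii) applies, so $3\in\pi$ (since $\pi\subseteq\{p,s\}$ and $3$ must be one of them), and then Lemma \ref{lem:PSL3} gives $\dd_\pi(S)\le\frac13\le\frac1p$ directly.

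So assume $n\ge 4$. If $(n,p)=(6,3)$ is excluded from Theorem \ref{Z(P)abelian}, but in that situation $S=\PSL^\epsilon(6,q)$ with $3\in\pi$, which is case (i) of Theorem \ref{prop:ABC} and is handled by Proposition \ref{prop:E6}'s companion argument for linear groups — more precisely, for $n=6$, $p=3$ one checks directly (as in Lemma \ref{casep3}) that $3\mid q-\epsilon$ forces $P'$ to be too big, so $\dd_3(S)\le\frac13$. For the generic case $n\ge 4$ and $(n,p)\neq(6,3)$: let $r\in\pi$ be a prime dividing $\gcd(n,q-\epsilon)$, and let $R\in\Syl_r(S)$. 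By the hypothesis of Theorem \ref{teoB}, either $R=P$ (if $r=p$) and $P/\bZ(P)$ is abelian, or $R=Q$ (if $r=s$) and $Q$ is abelian, so in both cases $R/\bZ(R)$ is abelian. But Theorem \ref{Z(P)abelian} (applied with $p$ replaced by $r$) then says $r\nmid|\bZ(\SL^\epsilon(n,q))|=\gcd(n,q-\epsilon)$, contradicting the choice of $r$. Hence case (ii) cannot occur for $n\ge 4$ with $(n,p)\neq(6,3)$, and the conclusion follows from Theorem \ref{prop:ABC}.

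A more careful bookkeeping is needed when $3\mid q-\epsilon$ but $3\notin\pi$ while some larger prime in $\pi$ divides $\gcd(n,q-\epsilon)$: here Theorem \ref{Z(P)abelian} still applies to that larger prime (it only requires $p$ odd, $n\ge4$, $(n,p)\neq(6,3)$), so the same contradiction is reached. Likewise, if $3\in\pi$ and $3\mid q-\epsilon$ with $n\ge 4$, Lemma \ref{casep3} gives $\dd_\pi(S)\le\frac13\le\frac1p$ without needing Theorem \ref{Z(P)abelian} at all. I expect the main obstacle to be the careful case analysis around $(n,p)=(6,3)$ and the interplay between which prime of $\pi$ actually divides $\gcd(n,q-\epsilon)$: one must make sure that in every configuration permitted by the hypotheses of Theorem \ref{teoB} (abelian Sylow $s$-subgroup, $P/\bZ(P)$ abelian) either Theorem \ref{Z(P)abelian} or Lemma \ref{casep3}/Lemma \ref{lem:PSL3} is applicable, so that case (ii) of Theorem \ref{prop:ABC} is genuinely vacuous and the bound $\dd_\pi(S)\le\frac1p$ is forced.
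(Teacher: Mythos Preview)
Your approach is essentially the same as the paper's: reduce via Theorem \ref{prop:ABC} to the case where some $r\in\pi$ divides $\gcd(n,q-\epsilon)$, handle $n=3$ by Lemma \ref{lem:PSL3}, and for $n\ge 4$ invoke either Theorem \ref{Z(P)abelian} or Lemma \ref{casep3} to reach a contradiction with the structural hypotheses on the Sylow subgroups. The paper organises this last step more cleanly than you do: it simply splits on $r\ge 5$ (Theorem \ref{Z(P)abelian} gives a contradiction) versus $r=3$ (Lemma \ref{casep3} gives $\dd_\pi(S)\le 1/3$), which automatically sidesteps the exclusion $(n,r)=(6,3)$ in Theorem \ref{Z(P)abelian} without any separate discussion of $n=6$.

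One genuine slip: your remark that the case $(n,p)=(6,3)$ ``is case (i) of Theorem \ref{prop:ABC} and is handled by Proposition \ref{prop:E6}'s companion argument'' is wrong. Case (i) of Theorem \ref{prop:ABC} concerns the exceptional groups $E_6^\epsilon(q)$, which are entirely different from $\PSL^\epsilon(6,q)$; Proposition \ref{prop:E6} has nothing to say about linear groups. You do recover immediately afterwards by pointing to Lemma \ref{casep3}, which is the correct tool (and exactly what the paper uses), so the argument survives --- but the sentence invoking $E_6$ should be deleted. Once you drop that, and streamline the case split to ``$r=3$: apply Lemma \ref{casep3}; $r\ge 5$: apply Theorem \ref{Z(P)abelian}'', your proof coincides with the paper's.
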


\begin{proof}
The result follows by Theorem \ref{quasisimple} in the case
$\gcd(n,q-\epsilon)_{\pi}=1$. So assume that
$\gcd(n,q-\epsilon)_{\pi}>1$, so that there exists $r \in \pi$ such
that $r$ divides $\gcd(n,q-\epsilon)$. The case $n=3$ is then done
by Lemma \ref{lem:PSL3}. So we assume furthermore that $n\geq 4$.

Let $R \in \Syl_{r}(S)$. We have that $R/\bZ(R)$ is abelian by
hypothesis. This and the condition $r$ divides $\gcd(n,q-\epsilon)$
contradict Theorem \ref{Z(P)abelian} if $r\geq 5$. The remaining
case $r=3$ is handled by Lemma \ref{casep3}.
\end{proof}

We have completed the proof of Theorem \ref{teoB}, by combining
Theorems \ref{prop:An}, \ref{prop:sporadic}, \ref{thm:defining
char}, \ref{prop:ABC} and Propositions \ref{prop:E6} and
\ref{prop:PSLPSU}.

As mentioned before, Theorem \ref{thm:main} follows from Theorem \ref{teoB} and Theorem \ref{4.10} together with Theorem \ref{3.4}.


\section{Examples and further discussion}\label{sec:examples}

In this section, we present examples showing that the  converses of
both statements of Theorem \ref{thm:main} are false and the bounds
are generically sharp.

Consider the converse of the first part of Theorem \ref{thm:main}.
Assume first that $2 \in \pi$ and $3 \not \in \pi$. If $G$ is the
direct product of $\Sigma_{4}$ and an abelian group, then
$\dd_{\pi}(G)=\frac{1}{6}$. Now, let $\pi$ have size at least $2$
and $p>2$. Let $P$ be a finite $p$-group with $|P'| = p$. Let $C$ be
the cyclic group which is the direct product of the groups $C_q$
where $q$ runs over all primes in $\pi$ except for $p$. Let $T$ be
the elementary abelian $2$-group of rank $|\pi|-1$. Let $G = P
\times (C : T)$ where $C:T = \prod_{p \not= q \in \pi} (C_{q} :
C_{2})$. In this case \begin{align*}\dd_{\pi}(G) &\leq \Big(
\frac{p^{2}+p-1}{p^{3}} \Big) \Big( \prod_{p \not= q \in \pi}
\frac{q+1}{2q} \Big) \\
&\leq \Big( \frac{p^{2}+p-1}{p^{3}} \Big) \cdot {\Big(
\frac{p+1}{2p} \Big)}^{|\pi|-1}\\
&\leq \Big( \frac{p^{2}+p-1}{p^{3}} \Big) \cdot \Big( \frac{p+1}{2p}
\Big).\end{align*} Since $p \geq 3$, this is less than
$\frac{5}{6p}$, so the converse of the first statement is false.

Consider the converse of the second statement. Assume first that $2
\in \pi$ and $3 \not \in \pi$. If $G$ is the direct product of
$\Al_{4}$ and an abelian group, then $\dd_{\pi}(G)=\frac{1}{6}$.
Now, let $p\not=2$ and let $|\pi| \geq 3$. Let $C = \prod_{q \in
\pi} C_q$. Let $T = C_{p-1} \times {(C_{2})}^{|\pi|-1}$ and set $G =
C:T$. Then \[\dd_{\pi}(G) = \frac{2}{p} \cdot \prod_{p \not= q \in
\pi} \frac{q+1}{2q}.\] Since $|\pi| \geq 3$, $q \geq p+2$ and all
primes $q$ in $\pi$ are odd, we get $$\dd_{\pi}(G) \leq \Big(
\frac{2}{p} \Big) \cdot \Big( \frac{(p+2)+1}{2(p+2)} \Big) \cdot
\Big( \frac{(p+4)+1}{2(p+4)} \Big) \leq \frac{24}{35p}.$$ Thus the
converse of the second statement of Theorem \ref{thm:main} is also false.

The inequality $\dd_{\pi}(G) > \frac{p^{2}+p-1}{p^{3}}$ in the
second statement of Theorem \ref{thm:main} is sharp for every set of primes
$\pi$. Take $G$ to be the direct product of a finite non-abelian
$p$-group $P$ such that $P/\bZ(P)$ is isomorphic to $C_{p} \times
C_{p}$ with an abelian group. In this case $\dd_{\pi}(G) =
\frac{p^{2}+p-1}{p^{3}}$ and $G$ does not contain an abelian Hall
$\pi$-subgroup.

Let us consider now the inequality $\dd_{\pi}(G) > 1/p$ of the first
part. This condition is best possible when $p=2$ and $3 \in \pi$,
for if $G$ is a direct product of $\Sigma_3$ and an abelian group,
then $\dd_{\pi}(G) = 1/2$ and $G$ does not contain a nilpotent Hall
$\pi$-subgroup. However the bound is certainly not best possible
when $p$ is odd. In fact, following our proofs closely, it can be
seen that in such case, the group $G$ still possesses a nilpotent
Hall $\pi$-subgroup even when $d_\pi(G)=1/p$.

Now let $p$ be odd. We will show that in certain cases the
inequality $\dd_{\pi}(G)
> 1/2p$ does not imply that $G$ has a nilpotent Hall $\pi$-subgroup.
To see this let $\pi = \{ p, q \}$ where $q = 2p+1$; that is, $p$ is
a Sophie Germain prime. Let $G$ be the direct product of
$C_{q}:C_{p}$ and an abelian group. Elementary character theory
gives $k_{\pi}(C_{q}:C_{p}) = p + (q-1)/p$. Thus \[\dd_{\pi}(G) =
\dd_{\pi}(C_{q}:C_{p})=\frac{1}{2p+1} \left(1 +
\frac{2}{p}\right),\] which is strictly larger than $1/2p$.

The last example naturally raises the following question: for $\pi$
a set of \emph{odd} primes, what is the exact (lower) bound for
$d_\pi(G)$ to ensure the existence of a nilpotent Hall
$\pi$-subgroup in $G$? This seems nontrivial to us at the time of
this writing.

Let $G$ be a finite group and let $p$ be the smallest
 prime dividing $|G|$. If $n(p)$ denotes the smallest prime larger than $p$ and
\[
\Pro(G)> \frac{n(p)+p^2-1}{p^2n(p)}=:f(p),
\]
then $|G'|\leq p$ and thus $G$ is
nilpotent by Theorem \ref{n(p)} and Lemma \ref{Gprima}. Note that $f(p)\leq 1/p$ and
equality occurs if and only if $p=2$.

Now let $\pi$ be a set of primes and $p$ be the smallest member in
$\pi$. It is perhaps true that if $d_\pi(G)>f(p)$ then $G$ possesses
a nilpotent Hall $\pi$-subgroup, but this would require significant
more effort, especially on the part of simple groups of Lie type in
characteristic not belong to $\pi$. We have decided to work with the
bound $1/p$ instead in order to make our arguments flowing smoothly.
We certainly do not claim that $f(p)$ is the (conjectural) best
possible bound for $d_\pi(G)$ to ensure the existence of a nilpotent
Hall $\pi$-subgroup in $G$, and thus the question we just raised
above remains open.

%
%
%


\end{document}